\newtheorem{theorem}{Theorem}[section]
\newtheorem{corollary}[theorem]{Corollary}
\newtheorem{lemma}[theorem]{Lemma}
\newtheorem{proposition}[theorem]{Proposition}
\newtheorem{definition/theorem}[theorem]{Definition/Theorem}
\theoremstyle{definition} \newtheorem{definition}[theorem]{Definition}
\newtheorem{remark}[theorem]{Remark}
\newtheorem{example}[theorem]{Example}
\newtheorem{question}[theorem]{Question}
\newtheorem{notation}[theorem]{Notation}
\newcommand{\K}{\Bbbk}
\newcommand{\MOn}{\overline{M}_{0,n}}
\newcommand{\git}{\ensuremath{\operatorname{/\!\!/}}}
\DeclareMathOperator{\intt}{int}
\DeclareMathOperator{\trop}{Trop}
\DeclareMathOperator{\inn}{in}
\DeclareMathOperator{\Pic}{Pic}
\DeclareMathOperator{\DivCl}{Cl}
\DeclareMathOperator{\pos}{pos}
\DeclareMathOperator{\im}{im}
\DeclareMathOperator{\rank}{rank}
\DeclareMathOperator{\Hom}{Hom}
\DeclareMathOperator{\Spec}{Spec}
\DeclareMathOperator{\Proj}{Proj}
\DeclareMathOperator{\Hilb}{Hilb}
\DeclareMathOperator{\Chow}{Chow}
\DeclareMathOperator{\Cox}{Cox}
\DeclareMathOperator{\val}{val}
\begin{document}

\title{Equations for Chow and Hilbert Quotients}

\author{Angela Gibney}

\address{Department of Mathematics\\
Department of Mathematics\\
University of Georgia\\
Athens, GA 30602\\
USA
}

\email{agibney@math.uga.edu}

\author{Diane Maclagan}
\address{Mathematics Institute\\
Zeeman Building\\
University of Warwick\\
Coventry CV4 7AL\\
United Kingdom}

\email{D.Maclagan@warwick.ac.uk}

\begin{abstract}
 We give explicit equations for the Chow and Hilbert quotients of a
  projective scheme $X$ by the action of an algebraic torus $T$ in an
  auxiliary toric variety. As a consequence we provide GIT
  descriptions of these canonical quotients, and obtain other GIT
  quotients of $X$ by variation of GIT quotient.  We apply these
  results to find equations for the moduli space $\MOn$ of stable
  genus zero $n$-pointed curves as a subvariety of a smooth 
  toric variety defined via tropical methods.
\end{abstract}

\date{\today}

\subjclass[2000]{Primary: 14L30; Secondary 14M25, 14L24, 14H10}

\keywords{Chow quotient, Hilbert quotient, moduli of curves, space of phylogenetic trees}

\maketitle

\section{Introduction}

When a reductive group $G$ acts linearly on a projective scheme $X$, a
fundamental problem is to describe a good notion of a quotient $X/G$.
This question frequently arises in the construction and
compactification of moduli spaces.  In many situations, there is an
open subset $U \subset X$ on which $G$ acts freely, such that a scheme
$U/G$ exists as a geometric quotient.  Constructing the quotient $X/G$
is thus choosing a good compactification of $U/G$.  One way to
compactify is by forming the Chow quotient $X\git^{Ch} G$ or Hilbert
quotient $X \git^H G$ of $X$ by $G$ (see~\cite{Kapranov}).  These
quotients are taken to be the closure of $U/G$ in an appropriate Chow
variety or Hilbert scheme.  They are natural
canonical quotients with proper birational maps to any GIT quotient.
See also~\cite{CQ2}, \cite{YiHu}.

In this paper we treat the case where $G=T^d$ is a $d$-dimensional
algebraic torus acting equivariantly on a subscheme $X$ of $\mathbb
P^m$.  Given the ideal $I$ of $X$ as a subscheme of $\mathbb P^m$, we
describe equations for $X \git^{Ch} T^d$ and $X\git^{H} T^d$ in the
Cox rings of toric subvarieties of the Chow and Hilbert quotients of
$\mathbb P^m$.

  As a first application of our results, we give GIT constructions of
$X\git^{Ch}T^d$ and $X \git^{H}T^d$ and we prove that all GIT
quotients of $X$ by $T^d$ can be obtained from the Chow and Hilbert
quotients by variation of GIT.

As a second  application we study the action of an $(n-1)$-dimensional torus
$T^{n-1}$ on the Grassmannian $G(2,n)$.  Here we can take $U$
to be the points with nonvanishing Pl\"ucker coordinates, and the
quotient $U/T^{n-1}$ is the moduli space $M_{0,n}$ of smooth
$n$-pointed genus zero curves.  In this case the desired
compactification is the celebrated moduli space $\MOn$ of stable
$n$-pointed genus zero curves.  Kapranov has shown~\cite{Kapranov}
that $\MOn$ is isomorphic to both the Chow and Hilbert quotients of
$G(2,n)$ by the $T^{n-1}$-action.  We give explicit equations for
$\MOn$ as a subvariety of a smooth toric variety $X_{\Delta}$ whose
fan is the well-studied space of phylogenetic trees.  We show that the
equations for $\overline{M}_{0,n}$ in the Cox ring $S$ of $X_{\Delta}$
are generated by the Pl\"ucker relations homogenized with respect to
the grading of $S$.

We now describe our results in more detail.  The notation $X
\git^{\star} T^d$ is used to refer to either the Chow or the Hilbert
quotient.  We assume that no irreducible component of $X$ lies in any
coordinate subspace.  This means that $X \git^{\star} T^d$ is a
subscheme of $\mathbb P^m \git^{\star} T^d$.  The quotient $\mathbb
P^m \git^{\star} T^d$ is a not-necessarily-normal toric
variety~\cite{KSZ1} whose normalization we denote by
$X_{\Sigma^{\star}}$.  By $X \git^{\star}_n T^d$ we mean the pullback
of $X \git^{\star} T^d$ to $X_{\Sigma^{\star}}$.  Our main theorem, in
slightly simplified form, is the following.  This is proved in
Theorems~\ref{t:Eqmainthm} and \ref{t:VGIT} and
Proposition~\ref{p:GIT}.

\begin{theorem} \label{t:mainthm} Let $T^d \cong (\K^{\times})^d$ act on $\mathbb
  P^m$ and let $X \subset \mathbb P^m$ be a $T^d$-equivariant
  subscheme with corresponding ideal $I(X) = \langle f_1,\dots,f_g
  \rangle \subset \K[x_0,\dots,x_m]$.  Let $X_{\Sigma} \subset 
  X_{\Sigma^{\star}}$ be any toric subvariety with $X \subseteq
  X_{\Sigma} \subseteq X_{\Sigma^{\star}}$.
\begin{enumerate}
\item {\bf{(Equations)}} The ideal $I$ of the Hilbert or Chow quotient
$X \git_{n}^{{\star}} T^d$ in the Cox ring $S=\K[y_1, \ldots y_{r}]$
of ${X}_{\Sigma}$ can be computed effectively.  Explicitly, $I$ is
obtained by considering the $f_i$ as polynomials in
$y_1,\ldots,y_{m+1}$, homogenizing them with respect to the
$\DivCl(X_{\Sigma})$-grading of $S$, and then saturating the result by
the product of all the variables in $S$.

\item {\bf{(GIT)}} There is a GIT construction of the Chow and Hilbert
quotients of $X$, and these are related to the GIT quotients of $X$ by
variation of GIT quotient.  This gives equations for all quotients
in suitable projective embeddings.  Let $H=\Hom(\DivCl(X_{\Sigma}),
\K^{\times})$.  There is a nonzero cone $\mathcal{G} \subset
\DivCl(X_{\Sigma})\otimes \mathbb R$ for which $X \git_{n}^{{\star}}
T^d $ is the geometric invariant theory quotient
$$X\git_{n}^{{\star}} T^d = Z(I) \git_{\alpha} H,$$ for any rational
$\alpha \in \mathrm{relint}(\mathcal{G})$, where $Z(I)$ is the subscheme of $\mathbb A^r$
defined by $I$.  For any GIT quotient $X \git_{\beta} T^d$ of $X$,
there are choices of $\alpha$ outside $\mathcal{G}$ for which $Z(I)
\git_{\alpha} H = X \git_{\beta} T^d$.
\end{enumerate}

\end{theorem}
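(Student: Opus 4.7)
The plan is to attack the two parts of the theorem by exploiting the Cox ring description of $X_{\Sigma}$ as a GIT quotient of affine space, reducing the problem to understanding how the defining equations of $X$ transform under passage to the Cox coordinates.

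\textbf{Part (1), equations.} Cox's construction presents $X_{\Sigma}$ as the geometric quotient $[\Spec S \setminus V(B)]/H$, where $B$ is the irrelevant ideal and $H = \Hom(\DivCl(X_{\Sigma}), \K^{\times})$ acts by the $\DivCl(X_{\Sigma})$-grading.  Under the identification of $X_{\Sigma^{\star}}$ with the normalization of $\mathbb{P}^m \git^{\star} T^d$, the first $m+1$ Cox variables $y_1,\dots,y_{m+1}$ of $S$ correspond to the $T^d$-invariant coordinate hyperplanes $\{x_0 = 0\}, \ldots, \{x_m = 0\}$ of $\mathbb{P}^m$.  Viewing each generator $f_i \in \K[x_0,\dots,x_m]$ of $I(X)$ as an element of $\K[y_1,\dots,y_{m+1}] \subseteq S$ therefore defines the preimage of $(X \cap T^m)/T^d$ inside the dense torus orbit of $\Spec S$.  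Multiplying by appropriate monomials in $y_{m+2},\dots,y_r$ homogenizes the $f_i$ with respect to the $\DivCl(X_{\Sigma})$-grading, making the resulting ideal $H$-invariant so that it descends to a closed subscheme of $X_{\Sigma}$.  The final saturation by $y_1 \cdots y_r$ strips off extraneous components introduced by homogenization that sit on the toric boundary and do not arise as limits of generic $T^d$-orbits, leaving the scheme-theoretic closure of $(X \cap T^m)/T^d$ in $X_{\Sigma}$, which is $X \git^{\star}_n T^d$ by definition.

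\textbf{Part (2), GIT and VGIT.}  For the GIT construction, I apply the standard fact that $X_{\Sigma}$ itself is realized as the GIT quotient $\Spec S \git_{\alpha} H$ for any rational $\alpha$ in the relative interior of a suitable cone $\mathcal{G}$ in $\DivCl(X_{\Sigma})_{\mathbb{R}}$ (the ``G-ample cone'' of Cox).  Because $Z(I) \subseteq \Spec S$ is $H$-invariant and taking GIT quotients commutes with closed embeddings, one immediately obtains $X \git^{\star}_n T^d = Z(I) \git_{\alpha} H$ for any such $\alpha$.  For the variation statement, I would use the fact that the GIT chamber structure of the $H$-action on $\Spec S$ refines the chamber structure of the $T^d$-action on $\mathbb{P}^m$: the character lattice map induced by the inclusion $T^d \hookrightarrow (\K^{\times})^{m+1}/(\K^\times)$ pulls a $T^d$-linearization $\beta$ back to an $H$-linearization $\alpha$ (necessarily outside $\mathcal{G}$ when $\beta$ is not in the nef-type cone giving the canonical quotient).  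Standard VGIT compatibility, applied to the fact that semistable loci match under the quotient by the stabilizer $H/T^d$, identifies $Z(I) \git_{\alpha} H$ with $X \git_{\beta} T^d$.

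\textbf{Expected obstacle.}  The delicate point is Part (1): the homogenization is not canonical — different choices of monomials yield different ideals before saturation — and one must check that the saturated ideal is independent of these choices and defines $X \git^{\star}_n T^d$ scheme-theoretically, not merely set-theoretically.  This will likely require a flat-family argument that identifies the saturated ideal with the ideal of the flat limit, together with a precise description of which boundary components of the naive vanishing locus are actually limits of $T^d$-orbits and which are spurious.  Identifying the cone $\mathcal{G}$ explicitly in Part (2) is an additional bookkeeping task that I expect to follow from the Cox-ring GIT dictionary once the equations in Part (1) are in hand.
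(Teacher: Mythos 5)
Your Part (1) rests on the claim that viewing the $f_i$ as elements of $\K[y_1,\dots,y_{m+1}]\subseteq S$ already cuts out the preimage of $(X\cap T^m)/T^d$ in the dense torus of $\Spec S$, with homogenization serving only to make the ideal $H$-invariant so it descends. That claim is false, and it conceals the actual content of the proof. The torus chart of $X_{\Sigma}$ in Cox coordinates is governed by the ray matrix $R$, and the quotient $T^m\to T^m/T^d$ by a Gale dual $D$ of the weight matrix $A$; the substitution realizing their composite is $\nu(x_i)=\prod_j y_j^{V_{ij}}$ with $R=DV$, so each \emph{monomial} $x^u$ acquires its own monomial factor in $y_{m+2},\dots,y_r$. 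Since the $f_i$ are not homogeneous for that extra grading, $f_i(y_1,\dots,y_{m+1})$ and its homogenization have different zero sets already inside the torus $T^r$ (for $n=6$, compare $x_{12}x_{34}-x_{13}x_{24}+x_{14}x_{23}$ with $x_{12}x_{34}x_{125}x_{126}-x_{13}x_{24}x_{135}x_{136}+x_{14}x_{23}x_{145}x_{146}$), so ``plug in, then saturate'' computes the wrong subscheme: the homogenization \emph{is} the coordinate change, not an equivariance fix. The paper's argument consists precisely of the steps your sketch skips: equations for $(X\cap T^m)/T^d$ in the Laurent ring via the Gale dual (Proposition~\ref{p:quotienteqtns}), the identification of the ideal of a torus closure in the Cox ring with $JS_y\cap S=(\langle\,\cdot\,\rangle:y^\infty)$ (Proposition~\ref{p:closureeqtns}) --- which also disposes of your worry about non-canonicity of the homogenization without any flat-limit argument --- and the compatibility $R^T=V^TD^T$ showing that $\nu$ is the correct substitution. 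Note also that ``the closure of $(X\cap T^m)/T^d$ in $X_\Sigma$ is $X\git^{\star}_n T^d$ by definition'' is not the definition ($X\git^{\star}_n T^d$ is the pullback to $X_{\Sigma^{\star}}$ of a closure taken in a Chow variety or Hilbert scheme); this is where the hypothesis $X\git^{\star}_n T^d\subseteq X_{\Sigma}$ enters, and the comparison with $X\git^{\star}T^d$ in the normal irreducible case needs its own argument.

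Part (2) as sketched also does not go through. For rational $\alpha\in\relint(\mathcal G(X_\Sigma))$ one has $\mathbb A^r\git_\alpha H=X_{\Sigma'}$ for a \emph{projective} toric variety whose fan contains $\Sigma$ as a subfan with the same rays; it is not $X_\Sigma$, since the $X_\Sigma$ allowed here (e.g.\ $X_\Delta$ for $\MOn$) are typically non-complete. So ``GIT quotients commute with closed embeddings'' only identifies $Z(I)\git_\alpha H$ with the closure of $(X\cap T^m)/T^d$ in $X_{\Sigma'}$; showing this closure lies in $X_\Sigma$, hence equals $X\git^{\star}_n T^d$, is exactly where the paper invokes the tropical criterion (Corollary~\ref{c:tevelevcorollary}) together with the sufficiently-large hypothesis, and even the nonvanishing of $\mathcal G$ requires an argument (via the balancing condition on the tropical variety). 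For the VGIT statement, ``semistable loci match under the quotient by the stabilizer'' is not set up: there is no group surjection relating the $H$-action on $Z(I)\subset\mathbb A^r$ to the $T^d$-action on $X\subset\mathbb P^m$ for which this is standard. The paper instead proves $Z(I)\git_\alpha H\cong X\git_\beta T^d$ by an explicit graded-ring isomorphism $\widetilde{S}^{\beta}/I(X)^{\beta}\cong S^{\alpha}/I^{\alpha}$, $x^u\mapsto x^u y^{\ell\pi_2(\alpha)+Cu}$, and this is only valid for lifts $\alpha$ of $\beta$ satisfying $\alpha_i\geq-\min\{(Cu)_i: Au=\beta,\ u\in\mathbb Q_{\geq0}^{m+1}\}$ (plus a boundedness check that such $\alpha$ exist); for arbitrary lifts the asserted equality fails. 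Your proposal contains neither the numerical condition on $\alpha$ nor the kernel computation showing $\ker=I(X)^{\beta}$, which are the substance of Theorem~\ref{t:VGIT}.
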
 

A more precise formulation of the homogenization is given in
Theorem~\ref{t:Eqmainthm} and Remark~\ref{r:homogenization}.  We
explain in Corollary~\ref{c:alpha} how each choice of $\alpha \in
\mathrm{relint}(\mathcal{G})$ gives an embedding of $X \git^{\star}_{n} T^d$ into some
projective space.

 We use tropical algebraic geometry in the spirit of
Tevelev~\cite{Tevelev} to embed $\overline{M}_{0,n}$ in a smooth toric
variety $X_{\Delta}$.  The combinatorial data describing $\Delta$ and
the simple equations for $\overline{M}_{0,n}$ in the Cox ring of
$X_{\Delta}$ are described in the following theorem.  Let
$[n]=\{1,\dots,n\}$ and set $\mathcal I = \{ I \subset [n]: 1 \in I,
|I|, |[n] \setminus I| \geq 2 \}$.  The set $\mathcal I$ indexes the
boundary divisors of $\MOn$.

\begin{theorem} \label{t:M0nmainthm} Let $\Delta$ be the fan in
  $\mathbb R^{{n \choose 2}-n}$ described in Section~\ref{s:MOn} (the
  space of phylogenetic trees).  The rays of $\Delta$ are indexed by
  the set $\mathcal I$.

\begin{enumerate}
\item {\bf{(Equations)}} Equations for $\MOn$ in
the Cox ring $S=\K[x_I : I \in \mathcal I]$ of $X_{\Delta}$ are
obtained by homogenizing the Pl\"ucker relations with respect to the
grading of $S$ and then saturating by the product of the variables of
$S$.  Specifically, the ideal is
$$I_{\MOn}=\left( \left \langle \prod_{i,j \in I, k,l \not \in I} x_I
- \prod_{i,k \in I, j,l \not \in I} x_I + \prod_{i,l \in I, j,k \not
  \in I} x_I \right \rangle : (\prod_{I} x_I)^{\infty}\right),$$ where
the generating set runs over all $\{i,j,k,l\}$ with $1 \leq i <j<k<l
\leq n$, and $x_I=x_{[n]\setminus I}$ if $1 \not \in I$.

 \item {\bf{(GIT)}} There is a nonzero cone $\mathcal G \subset
\DivCl(X_{\Delta}) \otimes \mathbb R \cong \Pic(\MOn) \otimes \mathbb
R$ for which for rational $\alpha \in \intt(\mathcal G)$ we have the GIT
construction of $\MOn$ as
$$\MOn = Z(I_{\MOn}) \git_{\alpha} H,$$ where $Z(I_{\MOn}) \subset
\mathbb A^{|\mathcal I|}$ is the affine subscheme defined by $I_{\MOn}$, and $H$
is the torus $\Hom(\DivCl(X_{\Delta}), \K^{\times}) \cong (\K^{\times})^{|\mathcal I|-{n
\choose 2}+n}$.

\item {\bf{(VGIT)}} Given $\beta \in \mathbb Z^n$ there is $\alpha \in \mathbb
Z^{|\mathcal I|-{n \choose 2}+n}$ for which
$$Z(I_{\MOn}) \git_{\alpha} H = G(2,n) \git_{\beta} T^{n-1},$$ so all
GIT quotients of $G(2,n)$ by $T^{n-1}$ can be obtained from $\MOn$ by
variation of GIT.
\end{enumerate}
\end{theorem}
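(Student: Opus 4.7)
The plan is to specialize Theorem~\ref{t:mainthm} to $X = G(2,n) \hookrightarrow \mathbb P^{\binom{n}{2}-1}$ under its Pl\"ucker embedding, with the natural $T^{n-1}$-action inherited from $(\K^{\times})^{\binom{n}{2}}$, and with toric subvariety $X_{\Sigma} = X_{\Delta}$. By Kapranov's theorem $\MOn \cong G(2,n) \git^{\star} T^{n-1}$, and since $\MOn$ is smooth this also agrees with the pullback $G(2,n) \git^{\star}_n T^{n-1}$.

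The first step is to verify the inclusion $X_{\Delta} \subseteq X_{\Sigma^{\star}}$, which is what licenses the application of Theorem~\ref{t:mainthm} with this choice of ambient toric variety. The fan $\Sigma^{\star}$ is the Kapranov--Sturmfels--Zelevinsky fan of the normalized Chow/Hilbert quotient of $\mathbb P^{\binom{n}{2}-1}$ by $T^{n-1}$, described combinatorially via the Chow polytope and secondary fan of the coordinate hyperplane arrangement. The phylogenetic fan $\Delta$ supports the tropicalization of $M_{0,n}$ inside the quotient torus, so I would check either cone-by-cone, or via the Tevelev-style criterion that $\MOn \hookrightarrow X_{\Delta}$ is proper, that every cone of $\Delta$ lies in some cone of $\Sigma^{\star}$. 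With this inclusion in place, Theorem~\ref{t:mainthm} applies directly.

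For Part (1), I then invoke Theorem~\ref{t:mainthm}(1) on the Pl\"ucker ideal $\langle p_{ij}p_{kl} - p_{ik}p_{jl} + p_{il}p_{jk} \rangle$. The crucial calculation is to determine how each Pl\"ucker coordinate $p_{ab}$ becomes a monomial in the Cox-ring variables $x_I$ under $\DivCl(X_{\Delta})$-homogenization. Using the standard primitive generator $v_I$ of the ray $\rho_I$ (the split-metric vector associated to the partition $\{I,[n]\setminus I\}$), the order of vanishing of $p_{ab}$ along the boundary divisor $D_I$ is determined by whether $\{a,b\}$ is split by $\{I, [n]\setminus I\}$. A direct check then produces the three monomials $\prod_{i,j \in I,\, k,l \notin I} x_I$, $\prod_{i,k \in I,\, j,l \notin I} x_I$, $\prod_{i,l \in I,\, j,k \notin I} x_I$ as the homogenizations of the three terms of the Pl\"ucker relation (with the identification $x_I = x_{[n]\setminus I}$ for $1 \notin I$), and the final saturation by $\prod_I x_I$ is precisely what Theorem~\ref{t:mainthm}(1) prescribes.

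Parts (2) and (3) then follow immediately from Theorem~\ref{t:mainthm}(2): the torus $H = \Hom(\DivCl(X_{\Delta}), \K^{\times})$ has dimension $|\mathcal I| - \binom{n}{2} + n$ since $\DivCl(X_{\Delta})$ is the quotient of $\mathbb Z^{|\mathcal I|}$ by the character lattice of rank $\binom{n}{2}-n$, and the VGIT statement transfers Theorem~\ref{t:mainthm}(2) through the identification $\MOn \cong G(2,n) \git^{\star} T^{n-1}$. The main obstacle is the combinatorial setup for Part (1): verifying $X_{\Delta} \subseteq X_{\Sigma^{\star}}$, and matching the conventions for primitive ray generators carefully enough that the abstract homogenization of Theorem~\ref{t:mainthm}(1) evaluates to the clean product formula stated in the theorem. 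Once the dictionary between split-type and vanishing order is pinned down, the remainder of the argument is a direct specialization of the general result.
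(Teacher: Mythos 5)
Your overall strategy is the paper's own: specialize Theorem~\ref{t:mainthm} (that is, Theorem~\ref{t:Eqmainthm}, Proposition~\ref{p:GIT} and Theorem~\ref{t:VGIT}) to $X=G(2,n)$ with $X_{\Sigma}=X_{\Delta}$, and the homogenization computation you sketch for the Pl\"ucker relations is essentially the one carried out in the proof of Theorem~\ref{t:M0nmainthm2}. The genuine gap is at the step you compress into ``check \dots that every cone of $\Delta$ lies in some cone of $\Sigma^{\star}$.'' That condition is too weak: to invoke Theorem~\ref{t:Eqmainthm} (and Proposition~\ref{p:GIT}) you need $X_{\Delta}$ to be a toric \emph{subvariety} of $X_{\Sigma^{\star}}$, i.e.\ every cone of $\Delta$ must actually \emph{be} a cone of $\Sigma^{\star}$, and this must be verified separately for the secondary fan $\Sigma^{Ch}$ and the saturated Gr\"obner fan $\Sigma^{H}$; a cone of $\Delta$ strictly contained in a larger cone of $\Sigma^{\star}$ would destroy the open toric immersion and with it the identification of the closure of $M_{0,n}$ in $X_{\Delta}$ with the pullback $G(2,n)\git^{\star}_{n}T^{n-1}$. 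The alternative you offer, a Tevelev-style properness criterion for $\MOn\hookrightarrow X_{\Delta}$, addresses completeness of the closure, not the subfan condition, so it cannot substitute. This subfan statement is exactly where the bulk of the paper's work for part (1) lies: Proposition~\ref{p:deltasubfan} identifies each maximal cone of $\Delta$ with a chamber of the chamber complex of $D$ by describing the regular subdivision attached to a phylogenetic tree via the cones $C_v$ at internal vertices (reproving Kapranov's Claim 1.3.9) and then establishing the reverse inclusion through the facet description $w_{ij}+w_{kl}\geq w_{ik}+w_{jl}=w_{il}+w_{jk}$; Proposition~\ref{p:deltaHsubfan} does the analogous work for $\Sigma^{H}$ by a Gr\"obner basis argument for $I_{A_n}$. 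Your proposal names this as ``the main obstacle'' but supplies no argument for it, so as written the proof does not go through.

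The remaining inputs you take for granted are available but need to be supplied or cited: that the support of $\Delta$ equals $\trop(M_{0,n})$ (Proposition~\ref{p:Delta}, via Speyer--Sturmfels together with the explicit computation identifying the images of the vectors $\mathbf{e}_I$ with the $\mathbf{r}_I$), which combined with the subfan property yields sufficiency (Theorem~\ref{t:definingThm}) and hence the hypotheses of Theorem~\ref{t:Eqmainthm} and Proposition~\ref{p:GIT}; the smoothness and irreducibility of $\MOn$ to pass from $G(2,n)\git^{\star}T^{n-1}$ to $G(2,n)\git^{\star}_{n}T^{n-1}$, which you do note; and, for part (3), the hypotheses of Theorem~\ref{t:VGIT}, namely that $\Delta$ contains the rays corresponding to the columns of $D$ and that $D$ has no repeated columns (true for $n\geq 5$). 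Granting the subfan propositions, your parts (2) and (3) do follow as you say, exactly as in the paper's Theorem~\ref{t:GITM0n}, and your description of the homogenization (the exponent of $x_I$ in the image of a Pl\"ucker monomial governed by how $\{i,j,k,l\}$ sits relative to the split $\{I,[n]\setminus I\}$) matches the paper's use of the matrix $V=(I\,|\,C^T)$ from Equation~\ref{eqtn:RM0n}.
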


Part (3) of Theorem~\ref{t:M0nmainthm} relates to the work of
\cite{HMSV} where GIT quotients of $G(2,n)$ by $T^{n-1}$, or
equivalently of $(\mathbb P^1)^n$ by $\mathrm{Aut}(\mathbb P^1)$, are
studied.

In {\em Equations for $\MOn$}~\cite{KeelTevelevEqtns}, Keel and
Tevelev study the image of the particular embedding of $\MOn$ into a
product of projective spaces given by the complete linear series of
the very ample divisor $\kappa = K_{\MOn} + \sum_{I \in \mathcal I}
\delta_I$.  Theorem~\ref{t:M0nmainthm} concerns projective embeddings
of $\MOn$ corresponding to a full-dimensional subcone of the nef cone
of $\MOn$, including that given by $\kappa$.

A key idea of this paper is to work in the Cox ring of sufficiently
large toric subvarieties of $X_{\Sigma^{\star}}$.
This often allows one to give equations in fewer variables.  Also, a
 truly concrete description of $X_{\Sigma^{\star}}$ may be cumbersome
 or impossible, as in the case of $\MOn$, but a sufficiently large
 toric subvariety such as $X_{\Delta}$ can often be obtained.

We now summarize the structure of the paper.
Section~\ref{s:torictools} contains some tools from toric geometry
that will be useful in the rest of the paper.  The first part of
Theorem~\ref{t:mainthm} is proved in Section~\ref{s:Equations}, while
the GIT results are proved in Section~\ref{s:GIT}.  In
Section~\ref{s:MOn} we explicitly describe the toric variety
$X_{\Delta}$ that contains $\overline{M}_{0,n}$.
Theorem~\ref{t:M0nmainthm} is proved in Sections~\ref{s:MOnProof} and
\ref{s:VGIT}.  We end the paper with some natural questions about
$\MOn$ arising from this work.

{\bf{Acknowledgements:}} We thank Klaus Altmann for helpful
discussions about Chow quotients.  We also thank Sean Keel and Jenia Tevelev
for useful conversations.  The authors were partially
supported by NSF grants DMS-0509319 (Gibney) and DMS-0500386
(Maclagan).

\section{Toric tools} \label{s:torictools}

In this section we develop some tools to work with toric varieties
that will be used in our applications to Chow and Hilbert quotients.
We generally follow the notational conventions for toric varieties
of~\cite{Fulton}, with the exception that we do not always require
normality.  Throughout $\K$ is an algebraically closed field, and
$\K^{\times} = \K \setminus \{0\}$.  We denote by $T^d$ an algebraic
torus isomorphic to $(\K^{\times})^d$.  If $I$ is an ideal in
$\K[x_0,\dots,x_m]$ then $Z(I)$ is the corresponding subscheme of
either $\mathbb A^{m+1}$ or $\mathbb P^m$ depending on the context.

\subsection{Producing equations for quotients of subvarieties of tori}
\label{ss:equationsforquotients}

We first describe how to obtain equations for the quotient of a
   subvariety of a torus by a subtorus.  Let $Y$ be a subscheme of a
   torus $T^{m}$ that is equivariant under a faithful action of
   $T^{d}$ on $T^{m}$ given by $(t \cdot x)_j = (\prod_{i=1}^{d}
   t_i^{a_{ij}}) x_j$, and let $I(Y) \subset \K[x_1^{\pm 1}, \dots,
   x_m^{\pm 1}]$ be the ideal of $Y$.  Write $A$ for the $d \times m$
   matrix with $ij$th entry $a_{ij}$.  Let $D$ be a $(m-d) \times m$
   matrix of rank $m-d$  whose rows generate the integer kernel of $A$, so
   $AD^T=0$.
 The matrix $D$ is a {\em Gale dual} for the $d \times m$ matrix
$A=(a_{ij})$ (see \cite[Chapter 6]{Ziegler}).

\begin{proposition} \label{p:quotienteqtns} 
  Let $\phi \colon
  \K[z_1^{\pm 1},\dots, z_{m-d}^{\pm 1}] \rightarrow \K[x_1^{\pm
  1},\dots,x_m^{\pm 1}]$ be given by $\phi(z_i)=\prod_{j=1}^{m}
  x_j^{D_{ij}}$.  Then the ideal of $Y/T^{d}$ in the coordinate ring
  $\K[z_1^{\pm 1},\dots,z_{m-d}^{\pm 1}]$ of $T^{m}/T^{d}$ is
  given by $\phi^{-1}(I(Y))$.  This is generated by polynomials $g_1, \dots, g_s$ for which $I(Y)=\langle \phi(g_1),\dots,\phi(g_s) \rangle$.
\end{proposition}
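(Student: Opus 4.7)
The plan is to interpret $\phi$ as the quotient map of tori and then exploit the fact that the ideal $I(Y)$ is $T^d$-homogeneous.

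First I would verify that $\phi$ realises the quotient $T^m \to T^m/T^d$. Since the action is faithful, $A$ has rank $d$, so its integer kernel is a rank $m-d$ saturated sublattice of $\mathbb Z^m$, and a choice of $\mathbb Z$-basis is exactly the rows of $D$. Dually, the characters of $T^m/T^d$ are precisely those $x^u \in \K[T^m]$ that are $T^d$-invariant, i.e. those with $Au = 0$; this is the image of $\phi$, and $\phi$ is injective because $D$ has rank $m-d$. Thus $\phi$ identifies $\K[z_1^{\pm 1},\dots,z_{m-d}^{\pm 1}]$ with $\K[T^m]^{T^d}$, and $T^d$ acts freely on $T^m$ by translation, so $Y/T^d$ exists as a closed subscheme of $T^m/T^d$ and its ideal is the kernel of the composition $\K[T^m/T^d] \xrightarrow{\phi} \K[T^m] \twoheadrightarrow \K[T^m]/I(Y)$, which is $\phi^{-1}(I(Y))$.

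The second claim is that $\phi^{-1}(I(Y))$ has a generating set $g_1,\ldots,g_s$ with $I(Y)=\langle \phi(g_1),\ldots,\phi(g_s)\rangle$. The key point is that since $Y$ is $T^d$-equivariant, $I(Y)$ is a $T^d$-homogeneous ideal and decomposes as a direct sum $I(Y) = \bigoplus_{\chi} I(Y)_\chi$ of character spaces. Any $T^d$-weight $\chi$ that occurs in $\K[T^m]$ lies in the image of $A$, so there is a Laurent monomial $m_\chi$ of weight $-\chi$, which is a unit in $\K[T^m]$. Multiplication by $m_\chi$ gives a $\K[T^m]^{T^d}$-linear bijection $I(Y)_\chi \xrightarrow{\sim} I(Y)_0$, so in particular $I(Y) = \K[T^m]\cdot I(Y)_0$. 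Since $I(Y)_0 = I(Y) \cap \K[T^m]^{T^d} = \phi(\phi^{-1}(I(Y)))$, any generating set $g_1,\ldots,g_s$ of $\phi^{-1}(I(Y))$ in the Noetherian ring $\K[z_1^{\pm 1},\dots,z_{m-d}^{\pm 1}]$ produces, via $\phi$, a $\K[T^m]^{T^d}$-module generating set of $I(Y)_0$, and hence a $\K[T^m]$-ideal generating set of $I(Y)$.

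The main (small) obstacle is step two: one has to be sure that $I(Y)$ is genuinely spanned, over $\K[T^m]$, by its $T^d$-invariant part. This fails in general for ideals in polynomial rings under torus actions (where weight spaces other than $0$ need not shift into weight $0$ by a unit), so it is essential that we are working in the Laurent polynomial ring, where every monomial is a unit; this is what allows the identification $I(Y)_\chi \cong I(Y)_0$ and pins the whole argument down.
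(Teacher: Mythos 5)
Your proposal is correct, and at its core it runs on the same engine as the paper's proof: the $T^d$-action induces a $\mathbb Z^d$-grading of $\K[x_1^{\pm 1},\dots,x_m^{\pm 1}]$ with $\deg(x_i)=\mathbf{a}_i$, the invariant ring is the degree-zero part, $I(Y)$ is homogeneous, and the fact that every monomial is a unit in the Laurent ring is what makes the ideal be generated by its degree-zero part. The packaging differs, though. The paper extends $D$ to a unimodular $m\times m$ matrix $U$ (possible because $\mathbb Z^m/\im(D^T)$ is torsion-free) and uses the resulting automorphism $\widetilde{\phi}$ of $T^m$ to move the invariant subring onto the coordinate subring $\K[z_1^{\pm 1},\dots,z_{m-d}^{\pm 1}]$, after which the identification of the ideal of $Y/T^d$ with $\phi^{-1}(I(Y))$ is read off from the induced grading, and the generator statement is dispatched in one line. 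You stay intrinsic: you use that the rows of $D$ generate the full integer kernel of $A$ to identify $\im\phi$ with the invariant subring, and then the unit-monomial shift $I(Y)_\chi \cong I(Y)_0$ to get $I(Y)=\K[x_1^{\pm 1},\dots,x_m^{\pm 1}]\cdot I(Y)_0$, which gives the generator claim for \emph{any} generating set of $\phi^{-1}(I(Y))$ and spells out precisely what the paper leaves implicit; this is a genuine gain in explicitness, at the cost of not producing the explicit torus automorphism the paper reuses later. One small gloss worth a sentence in a final write-up: identifying the ideal of $Y/T^d$ as the kernel of $\K[T^m]^{T^d}\to \K[T^m]/I(Y)$ tacitly uses that the invariants of the quotient ring are the image of the invariants; this follows at once from the same weight-space decomposition (homogeneity of $I(Y)$ gives $(\K[T^m]/I(Y))_0=\K[T^m]_0/I(Y)_0$), so it is not a gap, but it should be said.
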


\begin{proof} 
  The coordinate ring of the quotient $Y/T^{d}$ is by definition the
  ring of invariants of $\K[x_1^{\pm 1},\dots, x_m^{\pm 1}]/I(Y)$ under
  the induced action of $T^{d}$. The $T^{d}$ action on $T^{m}$
  gives a $\mathbb Z^{d}$-grading of $\K[x_i^{\pm 1}]$ by setting
  $\deg(x_i)=\mathbf{a}_i$, where $\mathbf{a}_i$ is the $i$th column
  of the matrix $A$.  Since $T^{d}$ acts equivariantly on $Y$, the
  ideal $I(Y)$ is homogeneous with respect to this grading, so
  $\K[x_i^{\pm 1}]/I(Y)$ is also $\mathbb Z^{d}$-graded.  The ring
  of invariants is precisely the degree-zero part of this ring.

  To prove that this is isomorphic to $k[z_1^{\pm
  1},\dots,z_{m-d}^{\pm 1}]/\phi^{-1}(I(Y))$, we first define an
  automorphism of the torus $T^{m}$ so that $T^{d}$ is mapped to the
  subtorus with first $m-d$ coordinates equal to one.  Choose any $m
  \times m$ integer matrix $U$ with determinant one whose first $d$
  rows consist of the matrix $D$.  This is possible because by the
  definition of $D$ the cokernel $\mathbb Z^{m}/\im(D^T)$ is
  torsion-free, so $\mathbb Z^{m} \cong \im(D^T) \oplus \mathbb
  Z^{d}$.  Then the map $\widetilde{\phi} \colon \K[z_{1}^{\pm
  1},\dots,z_{m}^{\pm 1}] \rightarrow \K[x_1^{\pm 1}, \dots, x_m^{\pm
  1}]$ defined by $\widetilde{\phi}(z_i)=\prod_{j=1}^{m} x_j^{U_{ij}}$
  determines an automorphism of the torus $T^{m}$.  Note that the map
  $\phi$ is $\widetilde{\phi}$ restricted to the ring $\K[z_1^{\pm
  1},\dots,z_{m-d}^{\pm 1}]$.

  The ring $\K[z_1^{\pm 1},\dots,z_{m}^{\pm 1}]$ gets an induced
  $\mathbb Z^{d}$-grading from the grading on $\K[x_j^{\pm 1}]$ by
  setting $\deg(z_i)=\sum_{j=0}^m U_{ij}\mathbf{a}_j$, which is the
  $i$th column of $AU^T$.  Since the first $m-d$ rows of $U$ are the
  rows of $D$, and $AD^T=0$, we thus have $\deg(z_i)=0 \in \mathbb
  Z^{d}$ for $1 \leq i \leq m-d$.  The degrees of the $d$
  variables $z_{m-d+1},\dots,z_{m}$ are linearly independent, since
  $\rank(AU^T)$ is $d$.  This means that the degree zero part of
  $\K[z_1^{\pm 1},\dots, z_{m}^{\pm 1}]$ is $\K[z_1^{\pm
  1},\dots,z_{m-d}^{\pm 1}]$, which proves that the coordinate ring of
  $Y/T^{d}$ is given by $\K[z_1^{\pm 1},\dots,z_{m-d}^{\pm 1}]/J$,
  where $J = \widetilde{\phi}^{-1}(I(Y)) \cap \K[z_1^{\pm 1},\dots,
  z_{m-d}^{\pm 1}]$.  The result then follows since $J=
  \phi^{-1}(I(Y))$.  The statement about generators follows from the
  fact that $\phi$ is injective, since $\widetilde{\phi}$ is an
  isomorphism.
\end{proof}

\begin{example} \label{e:torusquotient}
Let $Y$ be the subscheme of $T^{10}$ defined by the ideal $I=\langle
x_{12}x_{34}-x_{13}x_{24}+x_{14}x_{23},
x_{12}x_{35}-x_{13}x_{25}+x_{15}x_{23},
x_{12}x_{45}-x_{14}x_{25}+x_{15}x_{24},
x_{13}x_{45}-x_{14}x_{35}+x_{15}x_{34},
x_{23}x_{45}-x_{24}x_{35}+x_{25}x_{34} \rangle \subseteq
\K[x_{ij}^{\pm 1} : 1 \leq i < j \leq 5]$.
 This is the intersection with the
torus of $\mathbb A^{10}$ of the affine cone over the Grassmannian
$G(2,5)$ in its Pl\"ucker embedding into $\mathbb P^9$.  The torus
$T^5$ acts equivariantly on $Y$ by $t \cdot x_{ij} = t_{i}t_j x_{ij}$, giving rise to matrices 
\renewcommand{\arraystretch}{0.8}
\renewcommand{\arraycolsep}{2pt}
\begin{equation*} 
A = \left( \text{\footnotesize $\begin{array}{llllllllll}  
1 & 1 & 1 & 1& 0 & 0& 0 & 0 & 0 & 0 \\  
1 & 0 & 0 & 0& 1 & 1& 1 & 0 & 0 & 0 \\  
0 & 1 & 0 & 0& 1 & 0& 0 & 1 & 1 & 0 \\  
0 & 0 & 1 & 0& 0 & 1& 0 & 1 & 0 & 1 \\  
0 & 0 & 0 & 1& 0 & 0& 1 & 0 & 1 & 1 \\  
             \end{array}$} \right),  \, \, \,
D= \left( \text{\footnotesize $\begin{array}{rrrrrrrrrr}
0 & 1 & -1 & 0 &-1& 1 & 0 & 0 & 0 & 0 \\
0 & 1 & 0 & -1 &-1& 0 & 1 & 0 & 0 & 0 \\
1 & 0 & -1 & 0 &-1& 0 & 0 & 1& 0 & 0 \\
1 & 0 & 0 & -1 &-1& 0 & 0 & 0 & 1 & 0 \\
1 & 1 & -1 & -1 &-1& 0 & 0 & 0 & 0 & 1 \\
            \end{array}$} \right),
\end{equation*}
where the columns are ordered $\{12, 13, \dots, 35,45 \}$.  The map
$\phi \colon \K[z_1^{\pm 1}, \dots, z_5^{\pm 1}] \rightarrow
\K[x_{ij}^{\pm 1} : 1 \leq i < j \leq 5 ]$ is given by $\phi(z_1) =
x_{13}x_{24}/x_{14}x_{23}$, $\phi(z_2) = x_{13}x_{25}/x_{15}x_{23}$,
$\phi(z_3) = x_{12}x_{34}/x_{14}x_{23}$, $\phi(z_4) =
x_{12}x_{35}/x_{15}x_{23}$, and $\phi(z_5)
=x_{12}x_{13}x_{45}/x_{14}x_{15}x_{23}$.

The ideal $\phi^{-1}(I)$ is then $\langle z_3 - z_1 + 1, z_4 - z_2 +1,
z_5-z_2+z_1, z_5-z_4+z_3, z_5-z_1z_4+z_2z_3 \rangle = \langle
z_3-z_1+1, z_4-z_2+1, z_5-z_2+z_1 \rangle \subset \K[z_1^{\pm
1},\dots,z_5^{\pm 1}]$.  For this it is essential that we work in the
Laurent polynomial ring; for example, $\phi(z_5-z_2+z_1) =
x_{13}/(x_{14}x_{15}x_{23})(x_{12}x_{45}-x_{14}x_{25}+x_{15}x_{24})$.
The variety of $\phi^{-1}(I)$ is the moduli space $M_{0,5}$.  Note
that this shows that $M_{0,5}$ is a complete intersection in $T^5$,
cut out by three linear equations.  This example is continued in
Example~\ref{e:M0nasChow} and Sections~\ref{s:MOn}, \ref{s:MOnProof},
and \ref{s:VGIT}.

\end{example}

\subsection{Producing equations for closures in toric varieties}

The next proposition describes how to find the ideal of the closure of
a subvariety of a torus in a toric variety.  We use the notation
$T^{m-d}$ for ease of connection with the rest of this section, but there
is no requirement that this torus be obtained as a quotient.  

Recall that the Cox ring of a normal toric variety $X_{\Sigma}$ (see
\cite{Cox} and \cite{Mustata}) is the polynomial ring $S=
\K[y_1,\dots,y_r]$, where $r=|\Sigma(1)|$ is the number of rays of
$\Sigma$.  It is graded by the divisor class group of $X_{\Sigma}$, so
that $\deg(y_i)=[D_i]$, where $[D_i] \in \DivCl(X_{\Sigma})$ is the
class of the torus-invariant divisor $D_i$ associated to the $i$th ray
$\rho_i$ of $\Sigma$.  An ideal $I \subset S$ determines an ideal
sheaf $\widetilde{I}$ on $X_{\Sigma}$, and thus a closed subscheme of
$X_{\Sigma}$, and conversely, every ideal sheaf on $X_{\Sigma}$ is of
the form $\widetilde{I}$ for some ideal $I$ of $S$ (Musta\c{t}\u{a}'s
Theorem 1.1 removes the need for the simplicial hypothesis in Cox's
Theorem 3.7).  The sheaf $\widetilde{I}$ is given on an affine chart
$U_{\sigma}$ of $X_{\Sigma}$ by $I_{\sigma}=(IS_{\prod_{i \not \in
\sigma} y_i})_{\mathbf{0}}$.  The correspondence between ideals in $S$ and closed
subschemes of $X_{\Sigma}$ is not bijective, but for any closed
subscheme $Z \subset X_{\Sigma}$ there is a largest ideal $I(Z)
\subset S$ with $\widetilde{I(Z)} = \mathcal I_Z$.

Recall also that if $I$ is an ideal in a ring $R$ and $y \in R$, then
$(I: y^{\infty}) = \{ r \in R : ry^k \in I \text{ for some } k >0 \}$.
Geometrically this removes irreducible components supported on the
variety of $y$.

\begin{proposition} \label{p:closureeqtns}
Let $X_{\Sigma}$ be an $(m-d)$-dimensional toric variety with Cox ring
$S=\K[y_1,..,y_r]$.  Set $y=\prod_{i=1}^r y_i$ so that
$$\rho: \K[T^{m-d}]=\K[z^{\pm 1}_1, ..., z^{\pm 1}_{m-d}]
\longrightarrow (S_y)_0$$ is the isomorphism given by the inclusion of
the torus $T^{m-d}$ into $X_{\Sigma}$.  If $Y \subset T^{m-d}$ is
given by ideal $I(Y)= \langle f_1,..,f_s \rangle \subset \K[T^{m-d}]$,
then the ideal $I$ for the closure $\overline{Y}$ of $Y$ in
$X_{\Sigma}$ is $(\rho(I(Y))S_y)\cap S$, which is 
$$I= \left( \langle {\widetilde{\rho(f_i)}: 1 \le i \le s} \rangle :
y^{\infty}\right),$$ where $\widetilde{\rho(f_i)}$ is obtained by
clearing the denominator of $\rho(f_i)$.
\end{proposition}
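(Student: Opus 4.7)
The plan is to use the Cox ring correspondence recalled just before the proposition: an ideal $I\subset S$ determines a closed subscheme whose restriction to the dense torus chart $T^{m-d}\subset X_{\Sigma}$ is cut out by $(IS_y)_{0}\subset (S_y)_{0}$, which under $\rho$ becomes an ideal of $\K[T^{m-d}]$. The scheme-theoretic closure $\overline Y$ is defined by the kernel of $\mathcal O_{X_{\Sigma}}\to j_{*}\mathcal O_Y$ where $j\colon Y\hookrightarrow X_{\Sigma}$; on an affine chart $U_\sigma\subset X_\Sigma$ with coordinate ring $R_\sigma\subset \K[T^{m-d}]$, this ideal is $I(Y)\cap R_\sigma$.

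Setting $I:=(\rho(I(Y))S_y)\cap S$, I would first check the extension-contraction identity $IS_y=\rho(I(Y))S_y$: the inclusion $\subseteq$ is immediate, and any element $h/y^k$ of $\rho(I(Y))S_y$ with $h\in S$ satisfies $h=y^k\cdot(h/y^k)\in\rho(I(Y))S_y\cap S=I$. Using the graded decomposition $S_y=\bigoplus_{\alpha}(S_y)_\alpha$ in which each piece is a rank-one free $(S_y)_0$-module (a degree-$\alpha$ Laurent monomial in the $y_i$ is a unit of $S_y$ generating $(S_y)_\alpha$), this yields $(IS_y)_0=\rho(I(Y))S_y\cap(S_y)_0=\rho(I(Y))$, so the subscheme defined by $I$ restricts to $Y$ on the dense torus. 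Moreover $I=(I:y^{\infty})$ automatically, since $I$ is the contraction of an ideal from the localization $S_y$, so the associated subscheme has no irreducible component contained in the toric boundary $V(y)=X_{\Sigma}\setminus T^{m-d}$. Each component thus meets the torus densely and is the closure of a component of $Y$, identifying the subscheme with $\overline Y$.

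For the explicit saturation formula, write $\rho(f_i)=\widetilde{\rho(f_i)}/y^{a_i}$ with $\widetilde{\rho(f_i)}\in S$; since $y$ is a unit in $S_y$, $\rho(I(Y))S_y=\langle\widetilde{\rho(f_i)}:1\le i\le s\rangle S_y$. The standard identity $JS_y\cap S=(J:y^{\infty})$ for any ideal $J\subset S$ then produces the displayed formula.

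The main obstacle I expect is justifying that the subscheme cut out by $I$ really equals $\overline Y$ on every affine chart $U_\sigma$, not only on the torus. Here one has to check that single-element saturation by $y$, rather than the more standard Cox irrelevant-ideal saturation, is enough: any spurious primary component of the subscheme would be supported on some $V(y_j)\subset V(y)$ and so is killed by $y$-saturation, while conversely no component of the genuine closure is discarded because every component of $\overline Y$ meets the dense torus and therefore is not contained in $V(y)$.
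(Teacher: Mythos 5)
Your torus-level reductions are correct and coincide with steps of the paper's own proof: the identities $IS_y=\rho(I(Y))S_y$ and $(IS_y)_0=\rho(I(Y))$, the automatic saturation $I=(I:y^{\infty})$ for a contracted ideal, and the final identity $JS_y\cap S=\bigl(\langle\widetilde{\rho(f_i)}\rangle:y^{\infty}\bigr)$ are all fine. The gap is exactly at the step you yourself flag as the main obstacle, and the justification you offer does not close it. You pass from ``$I=(I:y^{\infty})$ in $S$'' to ``the subscheme $Z\subset X_{\Sigma}$ defined by $\widetilde{I}$ has no (embedded or irreducible) component in the toric boundary,'' but these statements live in different places: primary components of $I$ are data in $\Spec S=\mathbb A^{r}$, whereas the associated points of $Z$ are read off from the chart ideals $I_{\sigma}=(IS_{\prod_{i\notin\sigma}y_i})_{\mathbf 0}$. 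Since the correspondence $I\mapsto\widetilde{I}$ is many-to-one and $\Sigma$ need not be simplicial (so $X_{\Sigma}$ is not a geometric quotient of an open subset of $\mathbb A^{r}$), transferring ``no associated prime contains $y$'' from $S$ to the charts of $X_{\Sigma}$ is precisely what has to be proved; as written, the sentence ``any spurious primary component of the subscheme would be supported on some $V(y_j)$ and so is killed by $y$-saturation'' assumes the conclusion.

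The step can be closed in two ways. The paper avoids charts and components altogether: using the largest-ideal convention $Z\mapsto I(Z)$ set up just before the proposition, it characterizes $I(\overline{Y})$ as the largest ideal of the form $I(Z)$ with $(I(Z)S_y)_0\subseteq J:=\rho(I(Y))$, shows every homogeneous ideal with $(IS_y)_0\subseteq J$ is contained in $JS_y\cap S$, and shows $JS_y\cap S$ is itself of the form $I(Z)$. Alternatively, your own contraction trick finishes your route chart by chart: writing $y_{\hat\sigma}=\prod_{i\notin\sigma}y_i$, the equality $I=IS_y\cap S$ gives $IS_{y_{\hat\sigma}}=IS_y\cap S_{y_{\hat\sigma}}$, hence $I_{\sigma}=(IS_{y_{\hat\sigma}})_0=\rho(I(Y))\cap (S_{y_{\hat\sigma}})_0$, which under Cox's identification $(S_{y_{\hat\sigma}})_0\cong\K[\sigma^{\vee}\cap M]$ is exactly the chart ideal $I(Y)\K[M]\cap\K[\sigma^{\vee}\cap M]$ of the scheme-theoretic closure; this replaces the component argument entirely. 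Finally, note that the proposition, read with the paper's convention, asserts that $JS_y\cap S$ is the largest ideal $I(\overline{Y})$ defining $\overline{Y}$; even a corrected version of your argument only identifies the subscheme, so you would still need the short maximality step (any $I''$ with $(I''S_y)_0\subseteq J$ satisfies $I''\subseteq I''S_y\cap S\subseteq JS_y\cap S$).
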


\begin{proof}
  Let $J=\rho(I(Y)) \subset (S_y)_0$.  The closure $\overline{Y}$ of
  $Y$ in $X_{\Sigma}$ is the smallest closed subscheme of $X_{\Sigma}$
  containing $Y$.  Since the torus $T^{m-d}$ is the affine toric
  variety corresponding to the cone $\sigma$ consisting of just the
  origin in $\Sigma$, for this $\sigma$ we have $I_{\sigma} =
  (IS_y)_{\mathbf{0}}$ for any ideal $I \subseteq S$.  As the
  correspondence between subschemes $Z$ of $X_{\Sigma}$ and ideals
  $I(Z)$ of $S$ is inclusion reversing, we have that $I(\overline{Y})$
  is the largest ideal $I$ in $S$ with $(I S_y)_0 \subseteq J$ for
  which $I=I(Z)$ for some subscheme $Z$ of $X_{\Sigma}$.  

There is a monomial, and thus a unit, in any degree $a$ for which
$(S_y)_a$ is nonzero, so if $K$ is an ideal in $S_y$, then $K_0S_y=K$,
and $(K \cap S)S_y =K$.  Thus $((J S_{y}\cap S)S_{y})_0 = (J S_y)_0 =
J$, and if $I$ is any homogeneous ideal in $S$ with $(IS_y)_0 \subset
J$ then $I \subseteq IS_y \cap S \subset JS_y \cap S$.  Thus to show
that $I=JS_y \cap S$ is the ideal of $I(\overline{Y})$, we need only
show that $I$ is of the form $I(Z)$ for some subscheme $Z$ of
$X_{\Sigma}$.  Indeed, let $I'$ be the largest ideal in $S$ with
$\widetilde{I'}=\widetilde{I}$.  Then by construction we have
$(I'S_y)_0 = (IS_y)_0 = J$, so by above $I' \subseteq I$, and thus
$I=I'$.  This shows that $I(\overline{Y})=JS_y \cap S$.

Suppose now that $I(Y)$ is generated by $\{f_1,\dots,f_s \} \subset
\K[z_1^{\pm 1},\dots,z_{m-d}^{\pm 1}]$.  Then $JS_y$ is generated by
$\{ \rho(f_1),\dots,\rho(f_s)\}$.  The denominator of each $\rho(f_i)$
is a monomial, which is a unit in $S_y$, so $JS_y$ is also generated
by the polynomials $\widetilde{\rho(f_i)}$ obtained by clearing the
denominators in the $\rho(f_i)$.  The result follows from the 
observation that if $K$ is an ideal in $S_y$, generated by
$\{g_1,\dots,g_s\} \subset S$, then $K \cap S = (\langle g_1,\dots,g_s
\rangle : y^{\infty})$.  See, for example, Exercise 2.3 of
\cite{Eisenbud}.
\end{proof}

\subsection{Sufficiently large subvarieties via tropical geometry}

\label{ss:tropical}

A key idea of this paper is to work in toric varieties whose fan has few cones.
This is made precise in the following definition.

\begin{definition}\label{d:sufflarge}
A toric subvariety $X_{\Delta}$ of a toric variety $X_{\Sigma}$ with
torus $T^m$ is {\em sufficiently large} with respect to a subvariety
$Y\subseteq X_{\Sigma}$ if the fan $\Delta$ contains all cones of
$\Sigma$ corresponding to $T^m$-orbits of $X_{\Sigma}$ that intersect
$Y$.  
\end{definition}

Tropical geometry provides the tools to compute whether a given toric
subvariety $X_{\Delta} \subseteq X_{\Sigma}$ is sufficiently large for
$Y \subseteq X_{\Sigma}$.  We now review the version we use in this
paper.
Let $Y \subset T^m \cong (\K^{\times})^m$ be a subscheme defined by
the ideal $I=I(Y) \subset S:=\K[x_1^{\pm 1},\dots,x_m^{\pm 1}]$.
Given a vector $w \in \mathbb
R^{m}$ we can compute the leading form $\inn_w(f)$ of a
polynomial $f \in S$, which is the sum of those terms $c_ux^u$ in $f$
with $w \cdot u$ minimal.  The initial ideal $\inn_w(J)$ is $\langle
\inn_w(f) : f \in J \rangle$.  
Let $K$ be any algebraically closed field extension of $\K$ with a
nontrivial valuation $\val : K^{\times} \rightarrow
\mathbb R$ such that $\val(\K)=0$. We denote by $V_K(I)$ the set $\{ u
\in (K^{\times})^{m} : f(u)=0 \text{ for all } f \in I \}$.

\begin{definition/theorem} \label{dt:tropdefn} Let $Y$ be a subvariety of $T^m$. 
 The tropical variety of $Y$, denoted $\trop(Y)$, is the closure in
$\mathbb R^{m}$ of the set $$\{(\val(u_1),\dots, \val(u_m)) \in
\mathbb R^{m}: (u_1,\dots,u_m) \in V_K(J) \}.$$ This equals the set
$$\{ w \in \mathbb R^m : \inn_w(I) \neq \langle 1 \rangle \}.$$ There
is a polyhedral fan $\Sigma$ whose support is $\trop(Y)$.
\end{definition/theorem}

Versions of this result appear in \cite{EKL}, \cite[Theorem
  2.1]{SpeyerSturmfels}, \cite{Draisma}, \cite{JensenMarkwigMarkwig},
\cite{Payne}.  We consider here only the ``constant coefficient''
case, where the coefficients of polynomials generating defining ideal
$I$ live in $\K$, so have valuation zero.  This guarantees there is
the structure of a fan on $\trop(Y)$, rather than a polyhedral
complex.  We note that we follow the conventions for tropical
varieties as tropicalizations of usual varieties as in, for example,
\cite{GathmannTropical} or \cite{SpeyerSturmfels} rather than the more
intrinsic definition used by Mikhalkin~\cite{MikhalkinICM}.  For
readers familiar with these works we emphasize that we follow the
$\min$ convention for the tropical semiring rather than the $\max$
convention of \cite{GathmannTropical}.    

The key result is the following fundamental lemma of Tevelev and its immediate
corollary.

\begin{lemma} \cite[Lemma 2.2]{Tevelev} \label{l:TevelevLemma}
Let $Y$ be a subvariety of the torus $T^m$, and let $\mathcal F$ be an
$l$-dimensional cone in $\mathbb Q^m$ whose rays are spanned by part
of a basis for $\mathbb Z^m \subseteq \mathbb Q^m$.  Let $U_{\mathcal
F} = \mathbb A^l \times (\K^{\times})^{m-l}$ be the corresponding
affine toric variety.  Then the closure $\overline{Y}$ of $Y$ in
$U_{\mathcal F}$ intersects the closed orbit of $U_{\mathcal F}$ if
and only if the interior of the cone $\mathcal F$ intersects the
tropical variety of $Y$.
\end{lemma}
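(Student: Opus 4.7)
The plan is to reduce by an automorphism of the torus $T^m$ to the case where $\mathcal F = \pos(e_1,\dots,e_l)$ is the standard positive $l$-orthant; this is possible because the primitive ray generators of $\mathcal F$ extend to a $\mathbb Z$-basis of $\mathbb Z^m$. In these coordinates $U_{\mathcal F} = \Spec \K[x_1,\dots,x_l,x_{l+1}^{\pm 1},\dots,x_m^{\pm 1}]$, the closed torus orbit is cut out by $x_1=\cdots=x_l=0$, and $\relint(\mathcal F) = \mathbb R_{>0}^l \times \{0\}^{m-l}$. Both tropicalization and orbit closures transform equivariantly under such a torus automorphism, so the statement of the lemma is unchanged.

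For the $(\Leftarrow)$ direction, given $w \in \relint(\mathcal F)\cap \trop(Y)$, Definition/Theorem~\ref{dt:tropdefn} supplies an algebraically closed valued extension $K/\K$ and a point $u=(u_1,\dots,u_m) \in V_K(I(Y))$ with $\val(u_i)=w_i$. Since each $w_i \ge 0$, each $u_i$ lies in the valuation ring $R \subseteq K$, so $u$ defines a morphism $\Spec R \to U_{\mathcal F}$ whose generic point lies in $Y \subseteq T^m$ and which therefore factors through $\overline{Y} \subseteq U_{\mathcal F}$ (by density of $Y$ in $\overline{Y}$ and closedness of $\overline{Y}$). Reducing modulo the maximal ideal of $R$ yields a $\K$-point of $\overline{Y}$ whose first $l$ coordinates vanish (because $w_i > 0$ for $i \le l$) and whose remaining coordinates are nonzero in the residue field (because $w_i = 0$ for $i > l$); this point lies in the closed orbit.

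For the $(\Rightarrow)$ direction, let $p=(0,\dots,0,p_{l+1},\dots,p_m)$ be a point of $\overline{Y}$ in the closed orbit. Using that $Y$ is open and dense in $\overline{Y}$, apply the valuative criterion to a curve germ in $\overline{Y}$ through $p$ meeting $Y$ generically to obtain a DVR $R$ with residue field $\K$, fraction field $K$, and a morphism $\Spec R \to \overline{Y} \subseteq U_{\mathcal F}$ sending the closed point to $p$ and the generic point into $Y \cap T^m$. After passing to an algebraic closure $\overline K$ with extended valuation $\val$, the resulting point $\eta \in V_{\overline K}(I(Y))$ satisfies $\val(\eta_i) > 0$ for $i \le l$ (since $\eta_i$ reduces to $0$) and $\val(\eta_i) = 0$ for $i > l$ (since $\eta_i$ reduces to $p_i \in \K^\times$). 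Hence $(\val(\eta_1),\dots,\val(\eta_m)) \in \relint(\mathcal F) \cap \trop(Y)$.

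The main obstacle I anticipate is the valuative setup in the $(\Rightarrow)$ direction: one must arrange that the chosen curve germ through $p$ actually meets the open set $Y$, and that the resulting DVR can be embedded in an algebraically closed valued field over which Definition/Theorem~\ref{dt:tropdefn} applies. Density of $Y$ in $\overline{Y}$ handles the first, and the standard existence of such extensions of DVRs handles the second; once these technicalities are in place, the equivalence reduces to the dictionary between vanishing of coordinates in the reduction and strict positivity of their valuations.
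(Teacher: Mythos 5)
The paper does not prove this lemma at all: it is quoted verbatim from Tevelev \cite{Tevelev}, so the only comparison available is with Tevelev's original argument, which is phrased in terms of initial degenerations --- for an integral $w$ in $\relint(\mathcal F)$ one studies the one-parameter-subgroup limit of $Y$, cut out by $\inn_w(I(Y))$, and uses the characterization $\trop(Y)=\{w : \inn_w(I(Y))\neq \langle 1\rangle\}$ --- rather than points over valued fields. Your proof takes the other standard route, via the valuation-theoretic description of $\trop(Y)$ together with valuation rings, and it is essentially correct: the reduction to $\mathcal F=\pos(e_1,\dots,e_l)$ is legitimate, the specialization argument in the $(\Leftarrow)$ direction is the right mechanism, and in the $(\Rightarrow)$ direction one can make the ``curve germ'' step cleanest by taking a valuation ring inside the function field of $\overline{Y}$ dominating $\mathcal O_{\overline{Y},p}$, so that the generic point automatically lands in the dense open $Y$; the coordinate-by-coordinate dictionary between vanishing in the reduction and strict positivity of valuation then gives a point of $\relint(\mathcal F)\cap\trop(Y)$ exactly as you say. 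The trade-off is the usual one: the initial-ideal argument stays inside Gr\"obner theory over $\K$, while your argument leans on the lifting half of the fundamental theorem of tropical geometry but makes the geometry of specialization very transparent.

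One step should be made explicit, because as written it overstates what Definition/Theorem~\ref{dt:tropdefn} gives. That statement only says $\trop(Y)$ is the \emph{closure} of the set of valuation vectors, so it does not literally supply a point $u$ with $\val(u)=w$; and an approximation is genuinely not enough here, since a small nonzero valuation in a coordinate $x_i$ with $i>l$ would make the reduction leave $U_{\mathcal F}$ altogether. You need to (a) choose $w$ rational, which is possible because $\trop(Y)$ carries a rational fan structure and $\mathcal F$ is rational, so $\relint(\mathcal F)\cap\trop(Y)$, if nonempty, contains rational points, and (b) invoke the exact lifting results (e.g.\ \cite{SpeyerSturmfels}, \cite{Payne}, \cite{JensenMarkwigMarkwig}, \cite{Draisma}) guaranteeing a $K$-point of $Y$ whose valuation vector is exactly $w$. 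Finally, the reduction modulo the maximal ideal is a point over the residue field of the valuation ring, which need not be $\K$; either work with Puiseux series over $\K$ (residue field $\K$) or simply observe that nonemptiness of $\overline{Y}\cap(\text{closed orbit})$ as a scheme is all that is required. With these adjustments your proof is complete.
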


\begin{corollary} \label{c:tevelevcorollary}
Let $Y$ be a subvariety of the torus $T^m$ and let $X_{\Sigma}$ be an
$m$-dimensional toric variety with dense torus $T^m$ and fan $\Sigma
\subseteq \mathbb R^m$.  Let $\trop(Y) \subseteq \mathbb R^m$ be the
tropical variety of $Y \subseteq T^m$.  Then the closure
$\overline{Y}$ of $Y$ in $X_{\Sigma}$ intersects the $T^m$-orbit of
$X_{\Sigma}$ corresponding to a cone $\sigma \subseteq \Sigma$ if and
only if $\trop(Y)$ intersects the interior of $\sigma$.

Thus a toric subvariety $X_{\Delta}$ of $X_{\Sigma}$ with $\Delta$ a
subfan of $\Sigma$ is sufficiently large with respect to
$\overline{Y}$ exactly when $\Delta$ contains every cone of $\Sigma$
whose relative interior intersects the $\trop(Y)$.
\end{corollary}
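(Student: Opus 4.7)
The second assertion is a direct restatement of the first: by Definition~\ref{d:sufflarge}, the subfan $\Delta \subseteq \Sigma$ is sufficiently large with respect to $\overline{Y}$ iff it contains every $\sigma \in \Sigma$ whose $T^m$-orbit $O_\sigma$ meets $\overline{Y}$, and the first assertion identifies these as exactly the cones with $\relint(\sigma) \cap \trop(Y) \neq \emptyset$. So the task reduces to proving, for each $\sigma \in \Sigma$, the equivalence
$$\overline{Y} \cap O_\sigma \neq \emptyset \ \Longleftrightarrow\ \relint(\sigma) \cap \trop(Y) \neq \emptyset.$$
My strategy is to reduce the general case to Lemma~\ref{l:TevelevLemma} via a smooth toric refinement.

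\textbf{Reduction via a smooth refinement.} Lemma~\ref{l:TevelevLemma} already gives the equivalence whenever $\sigma$ is spanned by part of a basis of $N = \mathbb Z^m$, applied in the affine chart $U_\sigma$. For an arbitrary $\sigma$, I would choose a smooth refinement $\Sigma'$ of $\Sigma$ that subdivides $\sigma$ into smooth cones $\tau_1,\ldots,\tau_k$, each spanned by part of a lattice basis; such refinements exist by standard toric resolution. The induced toric morphism $\pi \colon X_{\Sigma'} \to X_\Sigma$ is proper and restricts to the identity on $T^m$, so if $\overline{Y}'$ denotes the closure of $Y$ in $X_{\Sigma'}$ then $\pi(\overline{Y}') = \overline{Y}$. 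The orbit--cone correspondence, using that $O_\tau$ is sent to $O_{\sigma'}$ for $\sigma'$ the minimal cone of $\Sigma$ containing $\tau$, yields
$$\pi^{-1}(O_\sigma) = \bigsqcup_{\tau \in \Sigma',\ \relint(\tau)\subseteq \relint(\sigma)} O_\tau.$$
Combining these two facts, $\overline{Y} \cap O_\sigma \neq \emptyset$ iff $\overline{Y}' \cap O_\tau \neq \emptyset$ for some $\tau$ in this collection.

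\textbf{Conclusion and expected obstacle.} Each such $\tau$ is smooth, so Lemma~\ref{l:TevelevLemma} applied in the affine chart $U_\tau \subseteq X_{\Sigma'}$ (noting that the closure of $Y$ in $U_\tau$ is $\overline{Y}' \cap U_\tau$) gives $\overline{Y}' \cap O_\tau \neq \emptyset$ iff $\relint(\tau) \cap \trop(Y) \neq \emptyset$. Since the relative interiors of the cones $\tau \in \Sigma'$ with $\relint(\tau)\subseteq \relint(\sigma)$ partition $\relint(\sigma)$, the existence of such a $\tau$ meeting $\trop(Y)$ is equivalent to $\relint(\sigma) \cap \trop(Y) \neq \emptyset$, and chaining the equivalences finishes the proof. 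The real geometric content is already packed into Tevelev's lemma; the only place where care is needed is the combinatorial bookkeeping verifying $\pi(\overline{Y}') = \overline{Y}$ and the orbit decomposition of $\pi^{-1}(O_\sigma)$, both of which are routine consequences of the theory of toric morphisms. I do not anticipate any serious obstacle beyond setting up the smooth refinement cleanly.
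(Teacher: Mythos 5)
Your proposal is correct and takes essentially the same route as the paper: both reduce to the smooth case by passing to a toric refinement/resolution $\pi\colon X_{\Sigma'}\to X_{\Sigma}$ (with $\overline{Y}'$ the closure of $Y$ there) and then apply Lemma~\ref{l:TevelevLemma} chart by chart. The only difference is bookkeeping: you make explicit the decomposition $\pi^{-1}(O_\sigma)=\bigsqcup_{\relint(\tau)\subseteq\relint(\sigma)}O_\tau$ and the identity $\pi(\overline{Y}')=\overline{Y}$, which the paper states more briefly via the strict transform.
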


\begin{proof}
Since $\Sigma$ is not assumed to be smooth, or even simplicial, we
first resolve singularities.  Let $\pi: X_{\Sigma'} \rightarrow
X_{\Sigma}$ be a toric resolution of singularities, so $\Sigma'$ is a
refinement of the fan $\Sigma$, and let $\overline{Y}'$ be the strict
transform of $\overline{Y}$ in $X_{\Sigma'}$, which is the closure of
$Y$ in $X_{\Sigma'}$.  It suffices to prove the corollary for
$\overline{Y}' \subseteq X_{\Sigma'}$, as $\overline{Y}$ intersects
the orbit corresponding to a cone $\sigma \in \Sigma$ if and only if
$\overline{Y}'$ intersects the orbit corresponding to a cone $\sigma'
\in \Sigma'$ with $\sigma' \subseteq \sigma$.  Since the orbit
corresponding to a cone $\sigma' \in \Sigma'$ is the closed orbit of
the corresponding $U_{\sigma'}$, this result now follows from
Lemma~\ref{l:TevelevLemma}.
\end{proof}

\begin{example} \label{e:tropicaleg}
Let $\Sigma_1$ and $\Sigma_2$ be the two complete fans with rays as
pictured in Figure~\ref{f:sufflargeR2}.  If $Y \subset T^2$ is a
subvariety with tropicalization the dotted line shown in both figures,
then the shaded fans define sufficiently large toric subvarieties with
respect to the respective closures of $Y$ in $X_{\Sigma_1}$ and
$X_{\Sigma_2}$.

\begin{figure}
\center{\resizebox{6cm}{!}{\includegraphics{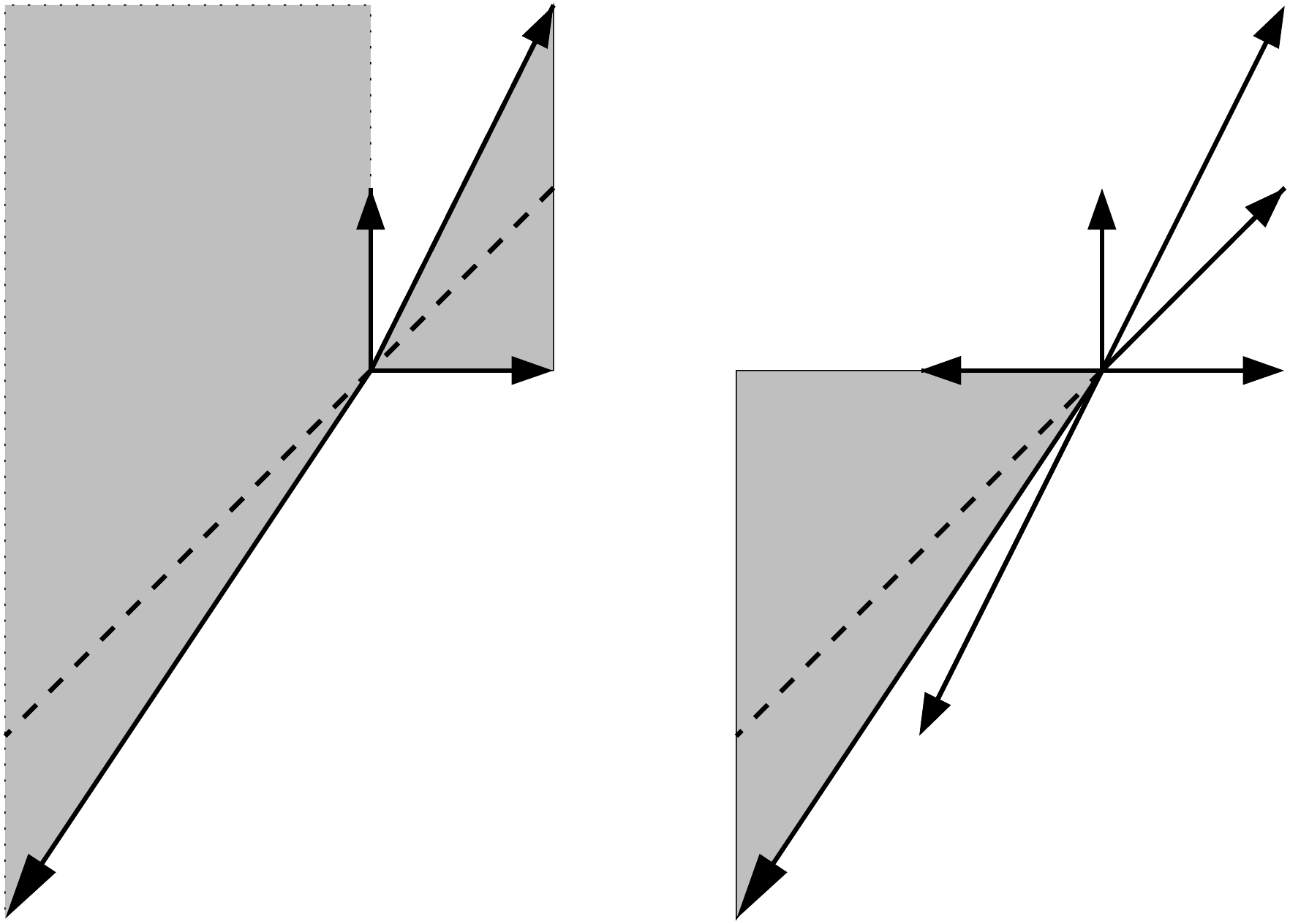}}}
\caption{ \label{f:sufflargeR2} Sufficient toric subvarieties of
$X_{\Sigma}$ with respect to $\overline{Y}$.}
\end{figure}

\end{example}

\section{Equations for Chow and Hilbert quotients}
\label{s:Equations}

Given equations for a $T^d$-equivariant subscheme $X \subset \mathbb
P^m$, we show in this section how to effectively compute generators
for the ideal of the Chow or Hilbert quotient in the Cox ring of a suitably chosen
toric variety.

\subsection{Definition of Chow and Hilbert quotients}

We first recall the definition of the Chow and Hilbert quotients $X
\git^{Ch} T^d$ and $X\git^H T^d$ of a projective variety by the action
of a $d$-dimensional algebraic torus $T^d$.  We assume that $X$ is
equivariantly embedded into a projective space $\mathbb P^m$ with no
irreducible component contained in a coordinate subspace.

Identifying $T^d$ with $(\K^{\times})^{d+1}/(\K^{\times})$ so that points in $T^d$
are equivalence classes $t=[t_0:\ldots:t_d]$, we can write
\begin{equation}
\label{e:Amatrix1}
T^d \times \mathbb P^m \rightarrow \mathbb P^m, \ \
(t,[x_0,\ldots,x_m]) \mapsto [(\prod_{j=0}^d t_j^{a_{j0}}) x_0,\ldots,
(\prod_{j=0}^d t_j^{a_{jm}})x_m],
\end{equation}  and let $A$ be the $(d+1)\times
(m+1)$ matrix with $ij$th entry $a_{ij}$ for $0 \leq i \leq d$ and $0
\leq j \leq m$.
We assume that $a_{0i} =1$ for $0 \leq i \leq m$, and that the
$T^d$-action on $\mathbb P^m$ is faithful, so $A$ has rank $d+1$.  Let
$X_{A} \subset \mathbb P^m$ be the closure of the $T^d$-orbit of $e=[1
\colon \dots \colon 1]\in T^m$.  Then $X_{A}$ is a toric variety with
associated torus $T^d$ and corresponding toric ideal $I_{A}=
\langle x^u - x^v : Au=Av \rangle \subset \K[x_0,\ldots,x_m]$ (see 
\cite[Chapter 4]{GBCP},\cite[Chapter 5]{GKZ}).

We identify $T^m$ with the quotient
$(\K^{\times})^{m+1}/(\K^{\times})$, so a point on $T^m$ is an
equivalence class $s=[s_0: \ldots : s_m]$.  Then $T^m$ acts on
$\mathbb P^m$ by $s \cdot x = [s_0 x_0: \ldots : s_m x_m]$ and every
point in the geometric quotient $T^m/T^d$ corresponds to an orbit of
$T^d$ whose closure in $\mathbb P^m$ is a $d$-dimensional subscheme of
$\mathbb P^m$ having ideal $I_{sA}= \langle s^vx^u - s^ux^v : Au=Av
\rangle \subset \K[x_0,\ldots,x_m]$, with $s \in T^m$.  These orbit
closures all have the same Hilbert polynomial, so define closed points
on the same connected component of $\Hilb(\mathbb P^m)$, and there is
an induced morphism $\phi_H: T^m/T^d \longrightarrow \Hilb(\mathbb
P^m)$.  The Hilbert quotient $X \git^{H}T^d$ is defined to be the
closure in $\Hilb(\mathbb P^m)$ of $\phi_H((X \cap T^m)/T^d)$.  Since
$\mathbb P^m \git^{H} T^d$ is the closure of $\phi_H(T^m/T^d)$ in
$\Hilb(\mathbb P^m)$, $X\git^{H} T^d \subset \mathbb P^m \git^{H}T^d$.

Analogously, there is a  morphism $\phi_{Ch}:  T^m/T^d
\longrightarrow \Chow(\mathbb P^m)$ (see \cite[Sections 1.3--1.4]{Kollar}). The
Chow quotient $X \git^{Ch}T^d$ is defined to be the closure in
$\Chow(\mathbb P^m)$ of $\phi_{Ch}((X \cap T^m)/T^d)$ and
$X\git^{Ch} T^d \subset \mathbb P^m \git^{Ch}T^d$. 

We remark that while the definitions given here appear to depend on
the choice of projective embedding, the Chow and Hilbert quotients of
$X$ are in fact independent of this choice.  See \cite{Kapranov} for a
more intrinsic formulation.  See also~\cite{BBS} for original work on
the Hilbert quotient.  For the most part the Chow and Hilbert
quotients can be treated uniformly, and we use the notation $X
\git^{\star} T^d$ to denote either of $X \git^{Ch} T^d$ or $X \git^{H}
T^d$.

The following example will be developed more in Section~\ref{s:MOn}.

\begin{example} \label{e:M0nasChow}
Consider the action of the $(n-1)$-dimensional
torus $T^{n-1}\cong (\K^{\times})^{n}/\K^{\times}$ on $\mathbb
P_{\mathbb C}^{{n\choose 2}-1}$ given by by
\begin{equation} \label{eqtn:Tnaction}
[t_1:\ldots:t_n]\cdot [\{x_{ij}\}_{1 \le i < j \le n}]=[\{t_i t_j
x_{ij}\}_{1 \le i < j \le n}].
\end{equation}
 Recall that the Pl\"ucker embedding
of the Grassmannian $G(2,n)$ into $\mathbb P^{{n \choose 2}-1}$ is given by taking a
subspace $V$ to $V \wedge V$, or by taking a $2\times n$ matrix
representing a choice of basis for $V$ to its vector of $2\times 2$
minors.  The ideal $I_{2,n}$ of $G(2,n)$ in the homogeneous
coordinate ring of $\mathbb P^{{n\choose 2}-1}$ is generated by the
set of Pl\"ucker equations $\langle p_{ijkl}=x_{ij}x_{kl}-
x_{ik}x_{jl}+ x_{il}x_{jk} : 1 \le i <j<k<l \le n\rangle$ and hence is
$T^{n-1}$-equivariant.  Let $G^0(2,n)$ be the open set inside $G(2,n)$
consisting of those points with nonvanishing Pl\"ucker coordinates,
corresponding to those two-planes that do not pass through the
intersection of any two coordinate hyperplanes.  The torus $T^{n-1}$
acts freely on $G^0(2,n)$ and all orbits are maximal dimensional.  The
moduli space $M_{0,n}$ parametrizing families of smooth $n$-pointed
rational curves is equal to the geometric quotient
$G^0(2,n)/T^{n-1}$.  By \cite[Theorem 4.1.8]{Kapranov}  
$$G(2,n)\git^{Ch} T^{n-1}=G(2,n)\git^{H} T^{n-1} = \MOn.$$
\end{example}

\subsection{Chow and Hilbert quotients of projective spaces} 

\label{ss:ChowHilbPm}

The Chow and the Hilbert quotients of $\mathbb P^m$ by $T^d$ are both
not-necessarily-normal toric varieties~\cite{KSZ1}.  We next describe
the fans $\Sigma^{Ch}$ and $\Sigma^H$ associated to the normalizations
of $\mathbb P^m \git^{Ch} T^d$ and $\mathbb P^m \git^{H} T^d$.  The
fan $\Sigma^{Ch}$ is the {\em secondary fan} of the matrix $A$ given
in (\ref{e:Amatrix1}).  Top-dimensional cones of the secondary fan
correspond to regular triangulations of the vector configuration
determined by the columns of $A$.  These are also indexed by radicals
of initial ideals of $I_A$ by \cite[Theorem 8.3]{GBCP}.  See
\cite{GKZ} for a description of the secondary fan.  The fan $\Sigma^H$
is the {\em saturated Gr\"obner fan}, whose cones are indexed by the
saturation of initial ideals of $I_{A}$ with respect to the irrelevant
ideal $\langle x_0,\dots,x_m\rangle$.  See \cite{BayerMorrison} and
\cite{MoraRobbiano} for the original work on the Gr\"obner fan, and
\cite{GBCP} or \cite{IndiaNotes} for expositions.  When we want to
refer to both fans simultaneously we use the notation
$\Sigma^{\star}$.

We denote by $\overline{N}$ the common lattice of the fans
$\Sigma^{Ch}$ and $\Sigma^H$.  Note that $\overline{N}=N/N'$, where $N
\cong \mathbb Z^{m+1}$ and $N'$ is the integer row
space of the matrix $A$.  The torus $T^{d+1}$
with $T^d=T^{d+1}/\K^{\times}$ is equal to $N' \otimes \K^{\times}$,
and $T^{m-d} = T^{m+1}/T^{d+1} \cong \overline{N} \otimes
\K^{\times}$.

  The images of the basis elements $\mathbf{e}_i \in \mathbb Z^{m+1}
\cong N$ correspond to rays of $\Sigma^{\star}$.  We can thus identify
these rays with the columns of the Gale dual $D$ of the matrix $A$
(see Section~\ref{ss:equationsforquotients}).  By the construction of
$D$ the integer column space of $D$ is $\mathbb Z^{m-d} \cong \bar{N}$.

\subsection{Equations for Chow and Hilbert quotients}

In this section we show how to give equations for the Chow or Hilbert
quotient of $X$ in the Cox ring of a toric variety $X_{\Sigma}$.
Throughout this section $X$ is a $T^d$-equivariant subscheme of
$\mathbb P^m$ with no irreducible component lying in any coordinate
subspace, and we choose $X_{\Sigma}$ to be a sufficiently large toric
subvariety of the of the normalization $X_{\Sigma^\star}$ of $\mathbb
P^m \git^\star \mathbb T^d$ with respect to the pullback of $X
\git^{\star} T^d$ to $X_{\Sigma^{{\star}}}$.  We denote by $I(X)
\subseteq \K[x_0,\dots,x_m]$ the saturated ideal defining $X$.  Let
the $(d+1) \times (m+1)$ matrix $A$ record the weights of the $T^d$
action, and let $D$ be its Gale dual (see (\ref{e:Amatrix1})).  Let
$R$ be the matrix whose columns are the first integer lattice points
on the rays of $\Sigma$.  Since the columns of $D$ span the lattice
$\overline{N}$ of $\Sigma^{\star}$, one has that
$$R= D V,$$
for some $(m+1)\times r$ dimensional matrix $V$.
We denote by $X \git_n^{\star} T^d$ the pullback of $X \git^{\star}
T^d$ to the normalization $X_{\Sigma^{\star}}$ of $\mathbb P^m
\git^{\star} T^d$.

\begin{theorem} \label{t:Eqmainthm} 
Let $T^d$ act on $\mathbb P^m$ and let $X \subset \mathbb P^m$ be a
  $T^d$-equivariant subscheme with $I(X) = \langle f_1,\dots,f_g
  \rangle \subseteq \K[x_0,\dots,x_m]$.
Let $X_{\Sigma}$ be a toric subvariety of $X_{\Sigma^{\star}}$
 containing $X \git^{\star}_n T^d$, and let $S= \K[y_1, \ldots y_{r}]$
 be the Cox ring of $X_{\Sigma}$.  Let $\nu
 \colon \K[x_0,\dots,x_m] \rightarrow \K[y_1^{\pm 1}, \dots,
 y_{r}^{\pm 1}]$ be given by $\nu(x_i)=\prod_{j=1}^{r} y_j^{V_{ij}}$.

 The ideal of $X \git^{\star}_n T^d$ in $S$ is $I=\nu(I(X)) \cap S$,
 and is obtained by clearing denominators in $\{\nu(f_i): 1 \leq i
 \leq g \}$, and then saturating the result by the product of all the
 variables in $S$.
If $X \git^{\star} T^d$ is irreducible and normal, it is isomorphic to $X
\git^{\star}_n T^d$ and $I$ is the ideal of $X \git^{\star} T^d$ in
$S$.
\end{theorem}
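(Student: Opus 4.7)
The plan is to reduce the claim to a combination of Propositions~\ref{p:quotienteqtns} and~\ref{p:closureeqtns}, and then verify that the composite of the two maps there matches $\nu$ via the relation $R = DV$. First, I would pass to the affine cone $CX \subseteq \mathbb{A}^{m+1}$ of $X$: its ideal is $I(X)$, it is $T^{d+1}$-equivariant by construction, and $(X \cap T^m)/T^d = (CX \cap T^{m+1})/T^{d+1}$. The hypothesis that $X_\Sigma$ contains $X\git^\star_n T^d$ is, by Corollary~\ref{c:tevelevcorollary}, precisely sufficient-largeness of $X_\Sigma$ in $X_{\Sigma^\star}$ with respect to this closure, so $X\git^\star_n T^d$ equals the closure of $(X\cap T^m)/T^d$ in $X_\Sigma$. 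It therefore suffices to compute that closure's defining ideal in the Cox ring $S$.

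Next, I would apply Proposition~\ref{p:quotienteqtns} to $CX \cap T^{m+1}$ with Gale dual $D$: the ideal of $(X \cap T^m)/T^d$ inside $\K[z_1^{\pm 1},\dots,z_{m-d}^{\pm 1}]$ is $J := \phi^{-1}(I(X)\,\K[x_0^{\pm 1},\dots,x_m^{\pm 1}])$, where $\phi(z_i)=\prod_j x_j^{D_{ij}}$. Then Proposition~\ref{p:closureeqtns} expresses the ideal of the closure in $X_\Sigma$ as $(\rho(J)\,S_y)\cap S$, where $\rho\colon \K[z^{\pm 1}] \to (S_y)_0$ is the torus-inclusion isomorphism sending $z_i\mapsto \prod_k y_k^{R_{ik}}$; by the saturation recipe of that proposition this ideal is obtained from generators of $\rho(J)$ by clearing denominators and then saturating by $y=\prod_k y_k$.

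Two ingredients identify this with $\nu(I(X))$. First, the relation $R = DV$ yields $\rho = \nu \circ \phi$ on the nose: applying $\nu$, defined by $\nu(x_j)=\prod_k y_k^{V_{jk}}$, to $\phi(z_i)$ gives $\prod_k y_k^{\sum_j D_{ij}V_{jk}} = \prod_k y_k^{(DV)_{ik}} = \prod_k y_k^{R_{ik}}=\rho(z_i)$. Second, $I(X)\,\K[x^{\pm 1}]$ is homogeneous for the $\mathbb{Z}^{d+1}$-grading induced by $A$, and in a Laurent ring every nonzero graded piece contains a unit monomial, so this ideal coincides with $\phi(J)\,\K[x^{\pm 1}]$. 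Combining these, $\nu(I(X))\,S_y = \nu(\phi(J))\,S_y = \rho(J)\,S_y$, hence the ideal of $X\git^\star_n T^d$ in $S$ is $(\nu(I(X))\,S_y)\cap S$, which is computed by clearing denominators in the $\nu(f_i)$ and saturating against $y$, as claimed. For the final assertion, if $X\git^\star T^d$ is irreducible and normal it equals its own normalization, so $X\git^\star_n T^d = X\git^\star T^d$ and $I$ is its defining ideal in $S$.

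The main obstacle is the dictionary between $\rho(J)$ and $\nu(I(X))$ inside $S_y$: this needs both the matrix identity $R = DV$ to produce $\rho = \nu\circ\phi$ and a careful use of $T^{d+1}$-equivariance of $I(X)$ together with the Laurent ring structure to eliminate the $\phi^{-1}$ appearing in the definition of $J$. Once these are in hand, the rest is a direct appeal to Propositions~\ref{p:quotienteqtns} and~\ref{p:closureeqtns} together with the sufficient-largeness criterion of Corollary~\ref{c:tevelevcorollary}.
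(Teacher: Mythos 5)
Your proof of the main claim (the computation of the ideal of $X\git^{\star}_n T^d$ in $S$) is correct and essentially the paper's argument: Proposition~\ref{p:quotienteqtns} applied to the affine cone, Proposition~\ref{p:closureeqtns} for the closure, and sufficient largeness via Corollary~\ref{c:tevelevcorollary}. Where the paper chases a commutative diagram built from the unimodular extension $U$ of $D$, you instead check directly that $R=DV$ gives $\rho=\nu\circ\phi$ and use homogeneity of $I(X)$ together with the fact that every nonzero graded piece of the Laurent ring contains a unit monomial (and that the degree-zero subring is exactly the image of $\phi$, by the Gale-dual property of $D$) to identify $\phi\bigl(\phi^{-1}(I(X)\K[x_0^{\pm 1},\dots,x_m^{\pm 1}])\bigr)\K[x^{\pm 1}]$ with $I(X)\K[x^{\pm 1}]$. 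That is a fine, slightly more explicit repackaging of the same computation, and the passage from $(\nu(I(X))S_y)\cap S$ to the clear-denominators-and-saturate recipe is the same as the last step of Proposition~\ref{p:closureeqtns}.

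The last assertion of the theorem, however, is not proved by your final sentence. By definition $X\git^{\star}_n T^d$ is the pullback $Y\times_Z\widetilde{Z}$ of $Y=X\git^{\star}T^d$ along the normalization $\widetilde{Z}=X_{\Sigma^{\star}}\rightarrow Z=\mathbb P^m\git^{\star}T^d$ of the \emph{ambient} (possibly non-normal) toric variety; it is not defined as the normalization of $Y$. So ``$Y$ is normal, hence equals its own normalization'' does not by itself yield $X\git^{\star}_n T^d\cong Y$: a normal, irreducible closed subscheme of a non-normal variety can very well change under pullback to the normalization of the ambient space (a reduced point at the node of a nodal curve pulls back to two points). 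What saves the day here, and what the paper proves, is that $Y$ meets the dense torus $T^{m-d}$ of $Z$, hence the locus over which $\widetilde{Z}\rightarrow Z$ is an isomorphism; combined with irreducibility of $Y$ this makes $Y\times_Z\widetilde{Z}\rightarrow Y$ birational, while finiteness of normalization pulls back to make it finite, and then normality of $Y$ (integral closedness of its coordinate rings on affine charts $Z=\Spec(A)$, $Y=\Spec(A/I)$, where one shows $\widetilde{A}/I\widetilde{A}$ is integral over $A/I$ with the same total quotient ring) forces the map to be an isomorphism, after which the charts glue. Your argument omits exactly these inputs (intersection with the torus/smooth locus, birationality, finiteness), so this part of the proof has a genuine gap; the rest stands.
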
 

\begin{remark} \label{r:homogenization}
When $X_{\Sigma}$ contains all rays
of $\Sigma^{\star}$ corresponding to columns of $D$, and $D$ has no
repeated columns, we can write $V=(I | C^T)$, where $I$ is the
$(m+1)\times (m+1)$ identity matrix and $C$ is an integer $(r-m-1)
\times (m+1)$ matrix.
Thus the map $\nu$ may be thought of as homogenizing the ideal $I(X)$
with respect to the grading of $S$.  This is the formulation assumed
in Part 1 of Theorem~\ref{t:mainthm}.
\end{remark}

\begin{proof}
[Proof of Theorem~\ref{t:Eqmainthm}] The proof of the first claim
  proceeds in three parts.  First, using Proposition
  \ref{p:quotienteqtns}, we find equations in $\K[z_1^{\pm
  1},\dots,z_{m-d}^{\pm 1}]$, the coordinate ring of the torus
  $T^m/T^d$ of $X_{\Sigma}$, for $(X \cap T^m)/T^d$. 
  Second, we use the isomorphism $\rho: \K[z_1^{\pm
  1},\dots,z_{m-d}^{\pm 1}] \longrightarrow S_{(\Pi_{i=1}^{r}y_i)},$
  and apply Proposition \ref{p:closureeqtns} to obtain generators for
  the ideal in $S$ of the closure of $(X \cap T^m)/T^d$ in
  $X_{\Sigma^{\star}}$.  Since this is $X \git_n^{\star} T^d$, we last
  check that this is the same as the ideal obtained by clearing
  denominators and saturating.

The key diagram is the following, where the map $U$ is as in Proposition~\ref{p:quotienteqtns} and the $i$ are inclusions.
$$ 
\xymatrix{ 
\K[z_1^{\pm 1},\dots, z_{m+1}^{\pm 1}] \ar[r]^-{\sim}_-{U^T}  &  \K[x_0^{\pm 1},\dots, x_m^{\pm 1}] \cong \K[M] \\
\K[z_1^{\pm 1}, \dots, z_{m-d}^{\pm 1}] \ar[u]^{i}  \ar[r]^{\sim}_{D^T} \ar[d]_{\sim}^{R^T}&
\K[x_0^{\pm 1},\dots, x_m^{\pm 1}]_0 \ar[u]^i \ar[dl]^{V^T}_{\sim} \\
(S_y)_0\\
}
$$

The content of Proposition~\ref{p:quotienteqtns} is that the ideal in
$\K[z_1^{\pm 1},\dots, z_{m-d}^{\pm 1}]$ of $(X \cap T^m)/T^d$ is
given by $i^{-1}({U^T}^{-1}(I(X))$.  This is taken under the map $D^T$ to
the degree zero part of the ideal $I(X)$ in $\K[x_0^{\pm
1},\dots,x_m^{\pm 1}]$, since $U$ is chosen in
Proposition~\ref{p:quotienteqtns} to have first $d+1$ rows equal to
$D$.  We thus see that the choice of identification of the lattice
$\overline{N}$ of the fan $\Sigma$ with $\mathbb Z^{m-d}$ identifies
$\K[\overline{M}]$ with $\K[z_1^{\pm 1},\dots,z_{m-d}^{\pm 1}]$, where
$\overline{M}$ is the lattice dual to $\overline{N}$.  This means that the
isomorphism $\K[\overline{M}] \cong (S_y)_0$ given in \cite[Lemma 2.2]{Cox}
is given by the matrix $R^T$, so the function $\rho$ of
Proposition~\ref{p:closureeqtns} is given by $\rho(z_i)=\prod_{j=1}^r
y_j^{R_{ij}}$.  It thus follows from Proposition~\ref{p:closureeqtns}
that the ideal in $S$ of the closure of $(X \cap T^m)/T^d$ in
$X_{\Sigma}$ is given by applying $R^T \circ i^{-1} \circ {U^T}^{-1}$
to the generators of $I(X)$, clearing denominators, and then
saturating by the product of the variables of $S$.

To complete the proof, it thus suffices to observe that $R^T \circ
i^{-1} \circ {U^T}^{-1}$ restricted to degree zero part of $\K[x_0^{\pm
1},\dots, x_m^{\pm 1}]$ is given by the matrix $V^T$.  This follows from
the fact that $R^T=V^TD^T$ and the bottom triangle of the above
commutative diagram is made of three isomorphisms.

We now consider the case where $X \git^{\star} T^d$ is irreducible and
normal, and show that it isomorphic to $X \git^{\star}_n T^d$.  Set
$Y=X \git^{\star} T^d$, and $Z = \mathbb P^m \git^{\star} T^d$.  Let
$\widetilde{Y}$ and $\widetilde{Z} = X_{\Sigma^{\star}}$ denote the
respective normalizations.  Begin by reducing to the case where
$Z=\Spec(A)$ and $Y=\Spec(A/I)$ are irreducible affine schemes with
$I$ an ideal in a ring $A$.  Note that $Y$ intersects the torus
$T^{m-d}$ of the not-necessarily-normal toric variety $Z$, and thus
intersects the smooth locus of $Z$.  Since $Y$ is irreducible, this
means that the map $Y \times_Z \widetilde{Z} \rightarrow Y$ is
birational, and so $A$ and $A/I$ have the same total quotient ring.
Normalization is a finite morphism, and the pullback of a finite
morphism is finite, so $\widetilde{A}/I\widetilde{A}$ is integral over
$A/I$.  The isomorphism $\widetilde{A}/I\widetilde{A} \cong A/I$ then
follows from the fact that $Y$ is normal, so $A/I$ is integrally
closed.  Finally since all maps are inclusions, everything glues to
prove $Y \times_Z \widetilde{Z} \cong Y$. \end{proof}

\begin{remark}
We remark that the choice of the matrix $V$ in
Theorem~\ref{t:Eqmainthm} is not unique.  This does not affect the
computation, however, as the induced map $\nu$ is unique when
restricted to the degree zero part of $\K[x_0^{\pm 1},\dots, x_m^{\pm
1}]$.
\end{remark}

\begin{example} \label{e:finalquadric}
Theorem~\ref{t:Eqmainthm} lets us compute equations for
$\overline{M}_{0,5}$.  Kapranov's description
of $\overline{M}_{0,5}$ as the Chow or Hilbert quotient of the
Grassmannian $G(2,5)$ by the $T^4$ action, described in
Example~\ref{e:M0nasChow}, gives embeddings of $\overline{M}_{0,5}$
into the normalizations of $\mathbb P^9 \git^{Ch} T^4$ and $\mathbb
P^9 \git^{H} T^4$.  These are both five-dimensional toric varieties
whose rays include the columns of the matrix $D$ of
Example~\ref{e:torusquotient}, plus ten additional rays, being the
columns of the matrix $-D$.  Let $\Delta$ be the two-dimensional fan
with rays the columns $\mathbf{d}_{ij}$ of $D$, and cones
$\pos(\mathbf{d}_{ij}, \mathbf{d}_{kl} : \{i,j\} \cap \{k,l \} =
\emptyset)$.  By direct computation, or by
Theorem~\ref{t:definingThm}, the fan $\Delta$ is a subfan of the fan
of both $\Sigma^{Ch}$ and $\Sigma^H$ that defines a sufficiently large
toric subvariety.  For this fan we have $R=D$, so $V$ is the ${n
\choose 2} \times {n \choose 2}$ identity matrix.

The ideal in $\K[x_{ij} : 1 \leq i < j \leq 5]$ defining $G(2,5)$ as a
subscheme of $\mathbb P^9$ is generated by the Pl\"ucker equations
given in Example~\ref{e:torusquotient}.  The Cox ring of $\mathbb
P_{\Delta}$ is $\K[y_{ij} : 1 \leq i < j \leq 5]$, and the map $\nu$
of Theorem~\ref{t:Eqmainthm} is the identity map $\nu(x_{ij})=y_{ij}$.
The ideal of the normal irreducible variety $\overline{M}_{0,5}$ in
the Cox ring of $\mathbb P_{\Delta}$ is thus generated by the
Pl\"ucker relations.  The saturation step is unnecessary in this case
as this ideal is prime.

\end{example}

\section{GIT constructions of Chow/Hilbert quotients} \label{s:GIT}

In this section we give a GIT construction of the Chow and Hilbert
quotients of $X$.  This follows from their description in the Cox ring
of a toric variety in Section~\ref{s:Equations}.  We also recover all
GIT quotients of $X$ by $T^d$ by variation of GIT quotient. As before
$X$ is a $T^d$-equivariant subscheme of $\mathbb P^m$ with no
irreducible component lying in any coordinate hyperplane.

Let $X_{\Sigma}$ be a sufficiently large toric subvariety of
$X_{\Sigma^{\star}}$.  We assume in addition that $X_{\Sigma}$ has a
torsion-free divisor class group, which can be guaranteed, for
example, by taking $X_{\Sigma}$ to contain the rays of
$\Sigma^{\star}$ corresponding to columns of the matrix $D$ (see
Section~\ref{ss:equationsforquotients}).  Let $I$ be the ideal of $X
\git_n^{\star} T^d$ in the Cox ring of $X_{\Sigma}$.  We next define a
cone which will index our choices of GIT quotient.

\begin{definition} \label{d:Gcone}
  Let $\{ [D_i] : 1 \leq i \leq |\Sigma(1)| \}$ be the set of classes of the
  torus invariant divisors on $X_{\Sigma}$.  Set
$$\mathcal G(X_{\Sigma})= \bigcap_{\sigma \in \Sigma} \pos([D_i] : i \not \in \sigma).$$
\end{definition}

We note that $\mathcal{G}(X_{\Sigma})$ is the cone in $\DivCl(X_{\Sigma})\otimes \mathbb R$ of divisor classes $[D] \in
\DivCl(X_{\Sigma})$ for which $[D]$ is globally generated.

\begin{lemma}
The cone $\mathcal{G}(X_{\Sigma})$ is positive dimensional.
\end{lemma}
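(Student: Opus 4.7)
The plan is to transport an ample class from the ambient projective toric variety $X_{\Sigma^{\star}}$ down to $X_{\Sigma}$ and verify that its image is a nonzero element of $\mathcal{G}(X_{\Sigma})$.  As the paragraph preceding the statement records, $\mathcal{G}(X_{\Sigma})$ is exactly the cone of globally generated (base-point-free) divisor classes on $X_{\Sigma}$: the condition $[D] \in \pos([D_i] : i \notin \sigma(1))$ for every maximal cone $\sigma \in \Sigma$ says precisely that $[D]$ admits an effective representative that avoids the torus-fixed point of the corresponding affine chart.  Thus it suffices to produce a single nonzero globally generated class.

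First I would observe that $X_{\Sigma^{\star}}$ is projective, being the normalization of a closed subscheme of $\Hilb(\mathbb{P}^{m})$ or $\Chow(\mathbb{P}^{m})$, so it admits an ample class $[L]$; represent $L$ by a strictly convex piecewise linear support function $\psi$ on $\Sigma^{\star}$.  The restriction $\psi|_{|\Sigma|}$ remains convex since convexity across a wall is a local condition on pairs of adjacent cones, and every adjacency in $\Sigma$ is also an adjacency in $\Sigma^{\star}$.  By the standard toric characterization of globally generated classes in terms of convex support functions, the associated class $[L|_{X_{\Sigma}}] = \sum_{\rho \in \Sigma(1)} a_{\rho} [D_{\rho}] \in \DivCl(X_{\Sigma})$, with $a_{\rho} = -\psi(v_{\rho})$, therefore lies in $\mathcal{G}(X_{\Sigma})$.

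The main obstacle is to show that this restricted class is nonzero in $\DivCl(X_{\Sigma}) \otimes \mathbb{R}$.  Here I would argue using the geometry of the quotient itself: $X \git^{\star}_{n} T^{d} \subseteq X_{\Sigma}$ is a closed subscheme of a component of $\Hilb(\mathbb{P}^{m})$ or $\Chow(\mathbb{P}^{m})$ and is therefore projective, so in the nontrivial case where it has positive dimension it contains an irreducible complete curve $C$ lying inside $X_{\Sigma}$.  Ampleness of $L$ on $X_{\Sigma^{\star}}$ forces $L \cdot C > 0$, and since any class trivial in $\DivCl(X_{\Sigma})$ pairs to zero with every complete curve in $X_{\Sigma}$, this rules out $[L|_{X_{\Sigma}}] = 0$.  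The resulting nonzero ray in $\mathcal{G}(X_{\Sigma})$ establishes the lemma; all other ingredients (existence of a strictly convex support function on a projective toric fan, stability of convexity under restriction to a subfan, and the identification of $\mathcal{G}$ with the nef/globally generated cone) are standard toric facts.
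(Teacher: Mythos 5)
Your proof is correct in substance, but the crucial second half takes a genuinely different route from the paper's. Both arguments begin the same way: push a nef (for you, ample) class from the projective toric variety $X_{\Sigma^{\star}}$ into $\DivCl(X_{\Sigma})\otimes \mathbb R$ via the restriction map $i^{*}$, and observe that the image lands in $\mathcal G(X_{\Sigma})$ because $\Sigma$ is a subfan of $\Sigma^{\star}$. Where the paper proves nonvanishing combinatorially --- assuming $i^{*}$ kills all of $\mathcal G(X_{\Sigma^{\star}})$, it invokes the chamber-complex description of the secondary fan and then the balancing condition on $\trop((X\cap T^m)/T^d)$, via the sufficiently-large hypothesis, to force the rays of $\Sigma$ to positively span $\mathbb R^{m-d}$ and reach a contradiction --- you instead argue geometrically: the quotient $X\git^{\star}_{n}T^d$ is projective and (by sufficient largeness) sits inside $X_{\Sigma}$, so it carries a complete curve $C$, and ampleness gives $L\cdot C>0$ while $i^{*}[L]=0$ in $\DivCl(X_{\Sigma})\otimes\mathbb R$ would make $L|_{X_{\Sigma}}$ torsion in $\Pic(X_{\Sigma})$ (using $\Pic\hookrightarrow\DivCl$ on the normal variety $X_{\Sigma}$) and hence of degree zero on $C$. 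This curve-theoretic argument avoids the chamber complex and the balancing condition entirely, and is in some ways more robust (it does not need $\trop$ to span the ambient space); what it costs is the explicit reliance on the quotient being positive dimensional, which you flag but do not handle --- in that degenerate case the lemma itself can fail for a minimal sufficiently large $\Sigma$ (e.g.\ $\Sigma=\{0\}$, so $\DivCl(X_{\Sigma})=0$), so this is really an implicit nondegeneracy assumption of the statement rather than a flaw peculiar to your argument. Two small wording points: $X\git^{\star}_{n}T^d$ is the pullback of $X\git^{\star}T^d$ to the normalization $X_{\Sigma^{\star}}$, hence projective because it is finite over a closed subscheme of $\Hilb(\mathbb P^m)$ or $\Chow(\mathbb P^m)$, not literally such a subscheme; and ``a class trivial in $\DivCl(X_{\Sigma})$ pairs to zero with curves'' should be phrased for the Cartier class $[L|_{X_{\Sigma}}]$, exactly as the torsion argument above does.
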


\begin{proof}
 Since $\Sigma$ is a subfan of the fan $\Sigma^{\star}$ we can number
  the rays of $\Sigma^{\star}$ so that the first $p$ live in $\Sigma$,
  while the last $s$ do not.  If $i : X_{\Sigma} \rightarrow
  X_{\Sigma^{\star}}$ is the inclusion map, then the pullback from
  $\DivCl(X_{\Sigma^{\star}})$ to $\DivCl(X_{\Sigma})$ is given by
  $i^{\star}([\sum_{i=1}^{p+s} a_i D_i]) = [\sum_{i=1}^{p} a_i D_i
  ]$.  Since $X_{\Sigma^{\star}}$ is a projective toric variety, its
  nef cone $\mathcal{G}(X_{\Sigma^{\star}}) = \cap_{\sigma \in
  \Sigma^{\star}} \pos( [D_i]: i \not \in \sigma)$ is a positive
  dimensional cone that can be viewed as living in
  $\DivCl(X_{\Sigma^{\star}}) \otimes \mathbb R$.  The image of
  $\mathcal{G}(X_{\Sigma^{\star}})$ under the map $i^*$ is contained
  in $\mathcal{G}(X_{\Sigma})$.  We now show that some nonzero vector
  in $\mathcal{G}(X_{\Sigma^{\star}})$ is taken to a nonzero vector
  under $i^*$.  Indeed, otherwise every element of
  $\mathcal{G}(X_{\Sigma^{\star}})$ could be written in the form
  $\sum_{i=p+1}^{p+s} a_i [D_i]$, so $\mathcal{G}(X_{\Sigma^{\star}})
  \subseteq \pos([D_i] : p+1 \leq i \leq s)$.  But by the relation
  between the cones of $\mathcal G(X_{\Sigma^{\star}})$ and chambers
  of the chamber complex (see \cite{BFS}), this means that the cone
  spanned by all the rays of $\Sigma$ lies in $\Sigma^{\star}$.
  However $X_{\Sigma}$ was assumed to be sufficiently large, which means
  that $\Sigma$  contains all cones of $\Sigma^{\star}$ intersecting the
  tropical variety of $X$, so the balancing condition on tropical
  varieties (see \cite[Theorem 2.5.1]{Speyerthesis}) implies that the
  rays of $\Sigma$ positively span the entire space $\mathbb R^{m-d}$.
  This would mean that the cone spanned by the rays of $\Sigma$ was
  all of $\mathbb R^{m-d}$, contradicting it lying in the fan
  $\Sigma^{\star}$.  We thus conclude that there are elements $v \in
  \mathcal{G}(X_{\Sigma^{\star}})$ with $i^*(v) \neq 0$, so
  $\mathcal{G}(X_{\Sigma})$ is a positive-dimensional cone.
\end{proof}

Let $l=|\Sigma(1)| - \dim(X_{\Sigma})$ be the rank of
$\DivCl(X_{\Sigma})$.  Let $H$ be the algebraic torus $\Hom(
\DivCl(X_{\Sigma}), \K^{\times})$.  Our condition that
$\DivCl(X_{\Sigma})$ is torsion-free guarantees that $H \cong
(\K^{\times})^{l}$.  We can regard $\DivCl(X_{\Sigma}) \otimes \mathbb
R$ as the space of (real) characters of the torus $H$, and
$\mathcal{G}(X_{\Sigma})$ as a subcone of the character space.  The
torus $H$ acts on $\mathbb A^r$ by $h \cdot x_i = h([D_i]) x_i$.
Recall that the torus of $X_{\Sigma}$ is $T^m / T^d$.  We denote by
$\mathrm{relint}(\mathcal{G}(X_{\Sigma}))$ the relative interior of
the cone $\mathcal{G}(X_{\Sigma})$.

Let $r=|\Sigma(1)|$.  Recall that for $\alpha \in \mathbb Z^l$ the GIT
quotient $Y \git_{\alpha} H$ of an affine variety $Y \subset \mathbb
A^{r}$ is
$$Y \git_{\alpha} H = \Proj ( \oplus_{j \geq 0}
(\K[x_1,\dots,x_r]/I(Y))_{j\alpha}),$$ where the $\mathbb Z^l$ grading
on the polynomial ring comes from the $H$-action on $\mathbb A^r$.
For $\alpha \in \mathbb Q^l$ we define $Y \git_{\alpha} H$ to be $Y
\git_{s\alpha} H$ for any integral multiple $s\alpha$.

\begin{proposition} \label{p:GIT}
Let $Y \subseteq \mathbb A^{r}$ be the subscheme defined by
the ideal $I \subseteq \Cox(X_{\Sigma})$ of $X \git_n^{\star}T^d$.
For rational $\alpha \in \mathrm{relint}(\mathcal{G}(X_{\Sigma}))$ we
have
$$X \git_{n}^{\star} T^d =Y \git_{\alpha} H.$$
\end{proposition}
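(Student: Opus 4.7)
The plan is to apply the Cox homogeneous coordinate ring construction to realize $X_{\Sigma}$ itself as a GIT quotient $\mathbb A^r \git_{\alpha} H$ for $\alpha \in \mathrm{relint}(\mathcal{G}(X_{\Sigma}))$, and then push this identification down to the $H$-invariant subscheme $Y \subseteq \mathbb A^r$. Throughout, the torsion-freeness of $\DivCl(X_{\Sigma})$ is used to identify $H$ with the algebraic torus $(\K^{\times})^{l}$, so that we are working in classical GIT rather than with characters of a quasi-torus.

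First I would set $B_{\Sigma}=\langle \prod_{i \not\in \sigma} y_i : \sigma \in \Sigma \rangle$ for the irrelevant ideal of the Cox ring, $U_{\Sigma}=\mathbb A^r \setminus V(B_{\Sigma})$, and note that by \cite{Cox} (extended to the non-simplicial case by Musta\c{t}\u{a}, as cited after Proposition~\ref{p:closureeqtns}) the map $U_{\Sigma} \to X_{\Sigma}$ is a good categorical quotient for the $H$-action $h\cdot y_i=h([D_i])y_i$. Next I would verify the GIT identification: for rational $\alpha \in \mathrm{relint}(\mathcal G(X_{\Sigma}))$, the $\alpha$-semistable locus of $\mathbb A^r$ coincides with $U_{\Sigma}$. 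This is the standard argument: if $\alpha \in \pos([D_i]:i\not\in\sigma)$ then some positive multiple $j\alpha$ is realized as the degree of a monomial $\prod_{i\not\in\sigma} y_i^{a_i}$, which is a semi-invariant not vanishing on the torus-invariant chart $\{y_i \neq 0 : i\not\in\sigma\}$; taking $\sigma$ to range over $\Sigma$ gives $U_{\Sigma} \subseteq \mathbb A^r_{\mathrm{ss}}(\alpha)$, while the reverse inclusion follows because a point with some $y_i=0$ for all $i\not\in\sigma$ outside of $\Sigma$ admits no such semi-invariant. The relative-interior hypothesis on $\alpha$ upgrades semistability to stability, so $\mathbb A^r\git_\alpha H=U_\Sigma/H=X_\Sigma$.

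Given this, the proposition follows by restricting the quotient map to the $H$-invariant subscheme $Y=Z(I) \subseteq \mathbb A^r$ (it is $H$-invariant because $I$ is, by construction, $\DivCl(X_{\Sigma})$-homogeneous). Since the $\alpha$-semistable locus of $Y$ is $Y \cap U_{\Sigma}$,
\[
Y\git_\alpha H=(Y\cap U_\Sigma)/H.
\]
On the other hand, the closed subscheme of $X_\Sigma=U_\Sigma/H$ corresponding to the ideal $I$ under the Cox/Musta\c{t}\u{a} correspondence between saturated ideals in $S$ and subschemes of $X_\Sigma$ has as its preimage in $U_\Sigma$ exactly $Y \cap U_\Sigma$. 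By Theorem~\ref{t:Eqmainthm}, this subscheme is precisely $X \git_n^{\star} T^d$, which yields the desired equality.

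The main obstacle, in my view, is Step 2: proving that $\alpha\in\mathrm{relint}(\mathcal G(X_{\Sigma}))$ cuts out $U_{\Sigma}$ as its semistable locus. The cone $\mathcal G(X_{\Sigma})$ need not itself be a chamber of the secondary/GKZ fan (it is defined by the combinatorics of $\Sigma$ alone), so one must argue directly from the inequality description $\mathcal G(X_{\Sigma})=\bigcap_{\sigma\in\Sigma}\pos([D_i]:i\not\in\sigma)$ that its relative interior lies inside a single GKZ chamber whose associated toric variety is $X_\Sigma$. Once this combinatorial identification is in place, the rest of the argument is a routine unwinding of the Cox construction restricted to the $H$-invariant subscheme $Y$.
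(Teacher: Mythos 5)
There is a genuine gap, and it sits exactly where you flagged your ``main obstacle'': the identification $\mathbb A^r \git_{\alpha} H = X_{\Sigma}$ with semistable locus $U_{\Sigma}$ is false in the setting of this proposition. Here $\Sigma$ is only a \emph{sufficiently large} subfan of $\Sigma^{\star}$ (in the application it is the space of phylogenetic trees $\Delta$, an $(n-3)$-dimensional fan in $\mathbb R^{\binom{n}{2}-n}$), so $\Sigma$ is not complete and $X_{\Sigma}$ is not projective. On the other hand, since the rays of $\Sigma$ positively span the ambient space (this is the balancing argument used in the lemma preceding the proposition), the degree-zero part of the Cox ring is just $\K$, so $\mathbb A^r \git_{\alpha} H = \Proj(\oplus_j S_{j\alpha})$ is projective for every nonzero $\alpha$. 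Hence no choice of linearization can produce $X_{\Sigma}$ as the quotient, and your proposed repair --- showing that $\mathrm{relint}(\mathcal G(X_{\Sigma}))$ lies in a single GKZ chamber whose associated toric variety is $X_{\Sigma}$ --- cannot succeed. Concretely, $\mathcal G(X_{\Sigma}) = \bigcap_{\sigma \in \Sigma}\pos([D_i] : i \notin \sigma)$ imposes conditions only for the cones of $\Sigma$, so it contains the nef cones of all the larger (complete) fans $\Sigma'$ having $\Sigma$ as a subfan; for $\alpha$ in its relative interior the quotient is some projective $X_{\Sigma'} \supsetneq X_{\Sigma}$, and points of $\mathbb A^r$ lying over orbits of cones $\tau \in \Sigma' \setminus \Sigma$ are $\alpha$-semistable, contradicting your reverse inclusion $\mathbb A^r_{\mathrm{ss}}(\alpha) \subseteq U_{\Sigma}$.

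A symptom of the problem is that your argument never invokes the sufficiently-large hypothesis, which is precisely what the statement needs. The paper's proof accepts that $\mathbb A^r \git_{\alpha} H = X_{\Sigma'}$ for a fan $\Sigma'$ with the same rays containing $\Sigma$ as a subfan, observes that $Y \git_{\alpha} H$ is the closure in $X_{\Sigma'}$ of $(Y\cap T^r)/H = (X \cap T^m)/T^d$ (here it matters that $I$ is saturated with respect to the product of the variables), and then uses Corollary~\ref{c:tevelevcorollary}: since $X_{\Sigma}$ is sufficiently large, $\trop\bigl((X\cap T^m)/T^d\bigr)$ is contained in the support of $\Sigma$, so this closure meets only orbits corresponding to cones of $\Sigma$ and therefore lies in the open toric subvariety $X_{\Sigma} \subseteq X_{\Sigma'}$, where it equals $X \git_n^{\star} T^d$. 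If you replace your Step 2 by this argument, the rest of your outline (the Cox-side bookkeeping identifying the subscheme cut out by $I$) becomes unnecessary; but as written, the proof does not go through.
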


\begin{proof}
  It follows from the results of Cox~\cite{Cox} and the chamber
  complex description of the secondary fan that $X_{\Sigma'} = \mathbb
  A^{r} \git_{\alpha} H$ is a projective toric variety whose fan
  $\Sigma'$ has same rays as $\Sigma$ and contains $\Sigma$ as a
  subfan.  The dense torus of $X_{\Sigma'}$ is also $T^m/T^d$.  The
  quotient $Y \git_{\alpha} H$ is a subvariety of $X_{\Sigma'}$.  Let
  $(Y \git_{\alpha} H)^0 = (Y\git_{\alpha} H) \cap T^m/T^d= (Y \cap
  T^r)/H = (X \cap T^m)/T^d$, where the last equality follows from the
  fact that $T^r/H \cong T^m/T^d$.  Then $Y \git_{\alpha} H$ is the
  closure of $(Y \git_{\alpha} H)^0$ in $X_{\Sigma'}$.  By
  Corollary~\ref{c:tevelevcorollary} to show that $Y \git_{\alpha} H$
  is the closure of $(Y \git_{\alpha} H)^0$ inside $X_{\Sigma}$ it
  suffices to show that the tropical variety of $(X \cap T^m)/T^d
  \subseteq T^m/T^d$ is contained in the support of $\Sigma$.  This
  follows from the hypothesis that $X_{\Sigma}$ is sufficiently large,
  again by Corollary~\ref{c:tevelevcorollary}.  Since $X \git_n
  ^{\star} T^d$ is the closure of $(X \cap T^m)/T^d$ in $X_{\Sigma}$,
  the proposition follows.
\end{proof}

This GIT description gives projective embeddings of $X
\git^{\star}_{n} T^d$, as we now describe.  Let $S=\K[x_1,\dots,x_r]$
be the Cox ring of $X_{\Sigma}$.  For $\alpha \in
\mathcal{G}(X_{\Sigma})$, write $S^{\alpha}$ for the subring
$\oplus_{j \geq 0} S_{j \alpha}$ of $S$.  Note that $S^{\alpha}$ has a
standard $\mathbb Z$-grading, by setting $\deg(f)=j$ for all $f \in
S_{j \alpha}$.  If $J$ is an ideal in $S$, write $J^{\alpha}$ for the
ideal $J \cap S^{\alpha}$ of $S^{\alpha}$.  The GIT description above
gives that
$$X \git^{\star}_{n} T^d = \Proj (S^{\alpha}/I^{\alpha}).$$

\begin{corollary} \label{c:alpha}
The GIT description of $X \git^{\star}_{n} T^d$ gives a projective
embedding of $X \git^{\star}_{n} T^d$ with the pullback of $\mathcal
O(1)$ on $\mathbb P^N$ equal to $\pi^*(\alpha)$ for large enough
integer $\alpha \in \mathcal G(X_{\Sigma})$, where $\pi$ is the
embedding of $X \git^{\star}_{n} T^d$ into $X_{\Sigma}$.
\end{corollary}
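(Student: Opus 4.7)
The plan is to unpack the $\Proj$ construction from Proposition~\ref{p:GIT} and identify its tautological $\mathcal{O}(1)$ with $\pi^*(\alpha)$. By Proposition~\ref{p:GIT}, for rational $\alpha \in \mathrm{relint}(\mathcal{G}(X_{\Sigma}))$ we have
\[
X \git^{\star}_{n} T^d \;=\; Y \git_{\alpha} H \;=\; \Proj(S^{\alpha}/I^{\alpha}),
\]
where $S^{\alpha} = \bigoplus_{j \geq 0} S_{j\alpha}$ carries the $\mathbb{Z}$-grading $\deg(f)=j$ for $f \in S_{j\alpha}$, and $I^{\alpha} = I \cap S^{\alpha}$. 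As noted in the proof of Proposition~\ref{p:GIT}, the ambient toric variety $X_{\Sigma'} = \mathbb{A}^r \git_{\alpha} H$ is projective with $\alpha$ ample, and $X \git^{\star}_{n} T^d$ sits as a closed subscheme of $X_{\Sigma'}$.

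First I would replace $\alpha$ by a positive integer multiple large enough that $\alpha$ is very ample on $X_{\Sigma'}$ and the section ring $\bigoplus_{j \geq 0} S_{j\alpha}$ is generated as a $\K$-algebra by its degree-one piece $S_{\alpha}$. Choosing a basis $s_0, \dots, s_N$ of $S_{\alpha}$ then gives a graded surjection $\K[u_0, \dots, u_N] \twoheadrightarrow S^{\alpha}/I^{\alpha}$ sending $u_i \mapsto s_i$, and hence a closed embedding $X \git^{\star}_{n} T^d \hookrightarrow \mathbb{P}^N$ under which $\mathcal{O}_{\mathbb{P}^N}(1)$ pulls back to the tautological $\mathcal{O}(1)$ on $\Proj(S^{\alpha}/I^{\alpha})$. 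Next I would identify this $\mathcal{O}(1)$ with $\pi^*(\alpha)$: by Cox's construction, extended to the non-simplicial setting by Musta\c{t}\u{a}, the sheaf on $X_{\Sigma}$ (and on $X_{\Sigma'}$) associated to the shifted Cox-ring module $S(\alpha)$ is $\mathcal{O}_{X_{\Sigma}}(\alpha)$, whose global sections are exactly $S_{\alpha}$. Restricting these sections to the closed subscheme $X \git^{\star}_{n} T^d \subset X_{\Sigma}$ cut out by $I$ realises precisely the embedding above, so the pullback of $\mathcal{O}_{\mathbb{P}^N}(1)$ is the restriction of $\mathcal{O}_{X_{\Sigma}}(\alpha)$ to $X \git^{\star}_{n} T^d$, i.e., $\pi^*(\alpha)$.

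The main obstacle is the ``large enough'' step: one must choose an integer multiple of $\alpha$ so that $\alpha$ is both very ample on the possibly non-simplicial $X_{\Sigma'}$ and the section ring of $\mathcal{O}_{X_{\Sigma'}}(\alpha)$ is generated in degree one. For projective toric varieties this is classical (ample implies some multiple is very ample, and a further Veronese is generated in degree one), but care is required to check that after this replacement the multigraded $\Proj$ still matches the GIT quotient and that restriction to the subscheme $X \git^{\star}_{n} T^d$ commutes with taking $\Proj$; the sheaf--module correspondence of Cox--Musta\c{t}\u{a} is the key input enabling both.
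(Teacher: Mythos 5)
Your proposal is correct and follows essentially the same route as the paper: Proposition~\ref{p:GIT} gives $X \git^{\star}_{n} T^d = \Proj(S^{\alpha}/I^{\alpha})$, one replaces $\alpha$ by a large multiple so that this ring is generated in degree one, and the resulting surjection from a polynomial ring yields the embedding into $\mathbb P^N$ with $\mathcal O(1)$ pulling back to $\pi^*(\alpha)$. The paper's proof simply takes monomial degree-one generators of $S^{\alpha}/I^{\alpha}$ and asserts the identification of $\mathcal O(1)$ with $\pi^*(\alpha)$ ``by construction''; your extra care with the Cox--Musta\c{t}\u{a} sheaf--module correspondence just makes that assertion explicit.
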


\begin{proof}
Assume $\alpha$ is large enough so that $S^{\alpha}/I^{\alpha}$ is
generated in degree one.  If not, we can replace $\alpha$ by $\ell \alpha$
for $\ell \gg 0$.  Let $x^{u_0},\dots,x^{u_N} \in S_{\alpha}$ generate
$S^{\alpha}/I^{\alpha}$, and define a map $\phi \colon
\K[z_0,\dots,z_N] \rightarrow S^{\alpha}/I^{\alpha}$ by
$\phi(z_i)=x^{u_i}$.  Let $J=\ker(\phi)$.  Then $X \git^{\star}_{n}
T^d= \Proj( \K[z_0,\dots,z_N]/J)$, and by construction the pullback of
$\mathcal O(1)$ is $\pi^*(\alpha)$.
\end{proof}

\begin{example} \label{e:M05GIT}
We continue the discussion of $\overline{M}_{0,5}$ begun in
Examples~\ref{e:M0nasChow} and~\ref{e:finalquadric}.
Example~\ref{e:finalquadric} shows that $\overline{M}_{0,5}$ is the
GIT quotient of the affine cone over the Grassmannian $G(2,5)$ by a
five-dimensional torus, so $\overline{M}_{0,5} = G(2,5)\git_{\alpha}
T^4$ for all $\alpha$ in the relative interior of $\mathcal
G(X_{\Delta})$.  The cone $\mathcal G(X_{\Delta})$ is the intersection
of the cones $\pos(\mathbf{d}_{pq} : \{p,q\} \neq \{i,j\}, \{k, l \}
)$ for all choices of $i,j,k,l$ distinct, where $\mathbf{d}_{pq}$ is
the column of the matrix $D$ of Example~\ref{e:torusquotient} indexed
by $\{p,q\}$.  This cone thus contains the vector $\alpha
=(2,2,2,2,2)$ in its relative interior.  The Cox ring of $X_{\Delta}$,
$S=\K[x_{ij} : 1 \leq i <j \leq 5]$, has an action of the symmetric
group $S_5$ permuting the variables, and $S^{\alpha}$ is generated in
degree one by monomials $x_{12}^2x_{34}x_{35}x_{45}$,
$x_{12}x_{23}x_{34}x_{45}x_{15}$ and their $S_5$-orbits, which are of
size $10$ and $12$ respectively.  Thus this gives an embedding of
$\overline{M}_{0,5}$ into $\mathbb P^{21}$.  The ideal $I^{\alpha}$ is
then cut out by linear equations induced from multiples of the the
Pl\"ucker relations such as $x_{15}x_{12}x_{34}x_{23}x_{45}-
x_{15}x_{12}x_{34}x_{24}x_{35}+ x_{15}x_{12}x_{34}^2x_{25}$ and its
$S_5$-orbit.  Thus $\overline{M}_{0,5}$ is cut out as a subscheme of
$\mathbb P^{21}$ by these linear relations and the binomial relations
coming from the kernel of the surjective map $\K[z_0,\dots,z_{21}]
\rightarrow S^{\alpha}$.
\end{example}

We now show that other GIT quotients of $X$ can be obtained from $X
\git^{\star}_{n} T^d$ by variation of GIT quotient (VGIT).  We assume
that $\Sigma$ contains all the rays of $\Sigma^{\star}$ corresponding
to rays of the Gale dual matrix $D$ (see
Section~\ref{ss:equationsforquotients}), and also that $D$ has no
repeated columns.  This means that we can write $R = (D | DC^T) =
D(I|C^T)$ for some $s \times (m+1)$ matrix $C$.
Let $S=\Cox(X_{\Sigma}) = \K[x_0,\dots,x_m,y_{1},\dots,y_{s}]$, where
$s=r-(m+1) = l - (d+1)$ and $\widetilde{S}=\K[x_0,\dots,x_m]$.  Grade
$\widetilde{S}$ by $\deg(x_i)=\mathbf{a}_i$, where $\mathbf{a}_i$ is
the $i$th column of the matrix $A$.  Our assumption on $\Sigma$ means
that the grading matrix for $S$ can be written in block form as
\begin{equation} \label{eqtn:GradingGIT} \left( \begin{array}{r|r}
A & 0 \\ \hline
-C & I_s \\
\end{array}
\right).
\end{equation}
We write $\mathbb N A$ for the subsemigroup of
$\mathbb Z^{d+1}$ generated by the columns of the matrix $A$.  Let
$\pi: \mathbb Z^l \rightarrow \mathbb Z^{d+1}$ be the projection onto
the first $d+1$ coordinates, and let $\pi_2 : \mathbb Z^l \rightarrow
\mathbb Z^{s}$ be the projection onto the last $s=l-(d+1)$
coordinates.  Recall the homomorphism $\nu \colon \K[x_0,\dots,x_m] \rightarrow
 \K[x_0, \dots,x_m, y_1^{\pm 1}, \dots, y_{s}^{\pm 1}]$ given in the statement
of Theorem~\ref{t:Eqmainthm}.  Since $D$ does not have repeated
columns this is given by $\nu(x^u) =x^uy^{Cu}$.  Again $Y=Z(I)$ is the
subscheme of $\mathbb A^{m+1+s}$ defined by the ideal $I$ of $X
\git^{\star}_{n} T^d$.

\begin{theorem} \label{t:VGIT}
With the notation given above, fix $\beta \in \mathbb N A$.  Then if
$\alpha \in \mathbb Z^l$ satisfies $\pi(\alpha)=\beta$, and $\alpha_i
\geq - \min \{(Cu)_i : Au=\beta, u \in \mathbb Q_{\geq 0}^{m+1} \}$ for 
$1 \leq i \leq s$,
then
$$Y \git_{\alpha} H \cong X \git_{\pi(\alpha)} T^d.$$ 
\end{theorem}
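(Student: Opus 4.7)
The plan is to prove this by realizing both sides as $\Proj$ of graded section rings and exhibiting an explicit graded isomorphism between them. Write $\alpha = (\beta, \alpha')$ under the decomposition $\mathbb Z^l = \mathbb Z^{d+1} \oplus \mathbb Z^s$ induced by the block grading matrix (\ref{eqtn:GradingGIT}). By the definition of the $H$-linearized quotient,
\[
Y \git_{\alpha} H = \Proj \bigoplus_{j \geq 0} (S/I)_{j\alpha}, \qquad
X \git_{\beta} T^d = \Proj \bigoplus_{j \geq 0} (\widetilde S / I(X))_{j\beta},
\]
so the goal is to construct a graded ring isomorphism between these two section rings.

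The candidate map is $\widetilde\nu \colon \bigoplus_{j\geq 0} \widetilde S_{j\beta} \to \bigoplus_{j\geq 0} S_{j\alpha}$ defined on monomials by $\widetilde\nu(x^u) = x^u y^{Cu + j\alpha'}$ when $Au = j\beta$, equivalently $\widetilde\nu(f) = y^{j\alpha'}\nu(f)$ for $f \in \widetilde S_{j\beta}$. The first step is to check $\widetilde\nu$ is a well-defined graded ring map. The block form (\ref{eqtn:GradingGIT}) gives $\deg_S(x^u y^{Cu+j\alpha'}) = (Au, -Cu+Cu+j\alpha') = j\alpha$, and multiplicativity on monomials is immediate from $A(u+u') = (j+j')\beta$. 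The non-negativity of the exponent $Cu + j\alpha'$ is exactly where the hypothesis on $\alpha'$ enters: for any integer $u \geq 0$ with $Au = j\beta$, the rational point $u/j \in \mathbb Q_{\geq 0}^{m+1}$ satisfies $A(u/j) = \beta$, so by assumption $(Cu)_i/j \geq -\alpha'_i$, giving $(Cu + j\alpha')_i \geq 0$.

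Next I would show $\widetilde\nu$ descends to an isomorphism on quotients. Surjectivity piece-by-piece is a monomial count: any monomial $x^u y^v \in S_{j\alpha}$ forces $Au = j\beta$ and $v = Cu + j\alpha'$, so it equals $\widetilde\nu(x^u)$. For the ideal correspondence, Theorem~\ref{t:Eqmainthm} identifies $I = \nu(I(X)) S_y \cap S$, so $\widetilde\nu(f) = y^{j\alpha'}\nu(f) \in \nu(I(X))S_y \cap S = I$ whenever $f \in I(X)_{j\beta}$. For the reverse containment I would invoke the retraction $\psi \colon S_y \to \widetilde S$ given by $y_k \mapsto 1$, which satisfies $\psi \circ \nu = \mathrm{id}_{\widetilde S}$; if $\widetilde\nu(f) \in I$, then since $y^{j\alpha'}$ is a unit in $S_y$ we get $\nu(f) \in \nu(I(X))S_y$, so $f = \psi(\nu(f)) \in \psi(\nu(I(X)) S_y) \subseteq I(X)$. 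Combining these yields the desired graded ring isomorphism, and passage to $\Proj$ gives the theorem.

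The main technical obstacle is aligning the hypothesis on $\alpha'$, stated as a minimum over rational points, with the non-negativity of $Cu + j\alpha'$ for every integer lattice point $u \geq 0$ with $Au = j\beta$ and every $j \geq 0$; the rational scaling $u \leftrightarrow u/j$ handles this uniformly. Once $\widetilde\nu$ is established as well-defined, graded, and invertible on quotients, the identification of the two $\Proj$'s is formal given Theorem~\ref{t:Eqmainthm} and the block grading structure (\ref{eqtn:GradingGIT}).
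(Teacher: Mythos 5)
Your overall strategy is the same as the paper's: you define the same twisted homogenization $x^u\mapsto x^uy^{Cu+j\pi_2(\alpha)}$ on $\widetilde S^{\beta}$, verify it is well defined from the hypothesis on $\alpha$ by the same rational-scaling argument, get surjectivity onto $S^{\alpha}$ by the same monomial count, and obtain $I(X)^{\beta}\subseteq J$ from the identity $I=\nu(I(X))S_y\cap S$ of Theorem~\ref{t:Eqmainthm}. The only point of divergence is the reverse containment, and there your argument as written has a gap.

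The map you call $\psi\colon S_y\to\widetilde S$, $y_k\mapsto 1$, is not well defined: in this paper $S_y$ denotes the localization at $y=\prod_{i=0}^m x_i\prod_{j=1}^s y_j$, so elements of $S_y$ may carry negative exponents on the $x_i$ as well, and evaluation at $y_k=1$ only gives a map $S_y\to\K[x_0^{\pm1},\dots,x_m^{\pm1}]$. Consequently the asserted containment $\psi(\nu(I(X))S_y)\subseteq I(X)$ fails: for $g\in I(X)$ and $h=x_0^{-1}$ one gets $\psi(h\,\nu(g))=g/x_0$, which is not even in $\widetilde S$. What is true is $\psi(\nu(I(X))S_y)\subseteq I(X)\K[x_0^{\pm1},\dots,x_m^{\pm1}]$, so to conclude $f\in I(X)$ you must additionally use that $f\in\widetilde S_{j\beta}$ together with the equality $I(X)\K[x_0^{\pm1},\dots,x_m^{\pm1}]\cap\widetilde S=I(X)$. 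That equality is not formal; it is exactly where the standing hypothesis that no irreducible component of $X$ lies in a coordinate subspace (with $I(X)$ the saturated ideal) enters, and it is the step the paper's proof labors over: reducing the coefficients $h_i$ to Laurent monomials, absorbing positive $x$-powers into the $g_i$, invoking $I(X)\K[x^{\pm1}]\cap\widetilde S=I(X)$, and then using homogeneity and injectivity of $\nu'$ to identify $f$ with $\sum_i g_i$. Once you correct the target of $\psi$ and insert this saturation step with its justification, your retraction argument does close the gap and is a compact version of the paper's computation.
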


\begin{proof}
  We will show that the map $\nu$ described above induces an
  isomorphism between $\widetilde{S}^{\beta}/I(X)^{\beta}$ and
  $S^{\alpha}/I^{\alpha}$.

The map $\nu$ sends a monomial $x^u$ to $x^u y^{Cu}$.  Define $\nu'
\colon \widetilde{S}^{\beta} \rightarrow S^{\alpha}$ by setting
$\nu'(x^u) = x^u y^{\ell \pi_2(\alpha) + Cu}$, when $\deg(x^u)=\ell \beta$.
By construction if $\deg(x^u)=\ell \beta$ then $\deg(\nu'(x^u))=\ell \alpha$,
and the assumption on $\alpha$ implies that $\ell \pi_2(\alpha) +Cu \in
\mathbb N^{l-d-1}$, so the map is well-defined.  Note also that $\nu'$
is injective and surjective, as if $x^uy^v \in S_{\ell  \alpha}$, then
$\deg(x^u)=\ell \beta$, and $v$ must equal $Cu$.  We denote by
$\bar{\nu'}$ the induced map from $\widetilde{S}^{\beta}$ to
$S^{\alpha}/I^{\alpha}$, and let $J=\ker(\bar{\nu'})$.  Since
$\bar{\nu'}$ is surjective as $\nu'$ is, it remains to show that $J=
I(X)^{\beta}$.  Since $\nu'$ is a graded homomorphism, $J$ is a
homogeneous ideal, so it suffices to check that each homogeneous
polynomial in $J$ lies in $I(X)$ and vice versa.  Recall from
Theorem~\ref{t:Eqmainthm} that $I= (\nu(I(X)) S_y) \cap S$, where
$y=\prod_{i=0}^m x_i \prod_{j=1}^s y_j$.  If $f = \sum_u c_u x^u \in
(I(X))_{\ell  \beta}$, then $\nu(f) = \sum_u c_u x^u y^{Cu}$, so $\nu'(f)=
\sum c_u x^u y^{\ell \pi_2(\alpha)+Cu} = y^{\ell \pi_2(\alpha)} \nu(f) \in
I_{\ell \alpha}$, and thus $I(X)^{\beta} \subseteq J$.

Suppose now that $f \in J_{l\beta}$, so $\nu'(f) \in I_{\ell  \alpha}$,
and thus $\nu'(f) = \sum_i h_i \nu(g_i)$ for $g_i \in I(X)$ and $h_i
\in S_y$, where we may assume that the $h_i$ are constant multiples
of Laurent monomials, and the $g_i$ are homogeneous.  Write $f=\sum_u
c_u x^u$, $g_i = \sum_{v} d_{v,i} x^{v}$, and
$h_i=c_ix^{u_i^+}/x^{u_i^-}y^{v_i}$, where $u_i^+, u_i^- \in \mathbb
N^{m+1}$, and $v_i \in \mathbb Z^s$.  Note that $\nu(c_ix^{u_i^+}g_i)
= c_ix^{u_i^+}y^{Cu_i^+} \nu(g_i)$, so by changing $v_i$ and $g_i$ we
may assume that $c_i=1$ and $u_i^+=0$ for all $i$.  Also, note that
the map $\nu$ lifts to a map from $\K[x_0^{\pm 1},\dots,x_m^{\pm 1}]$
to $\K[x_0^{\pm 1}, \dots,x_m^{\pm 1}, y_1^{\pm 1},\dots, y_s^{\pm
1}]$, and $I'(X):=I(X) \K[x_0^{\pm 1}, \dots,x_m^{\pm 1}] \cap
\widetilde{S} = I(X)$ (as no irreducible component of $X$ is contained in a
coordinate subspace).  Also $\nu(I'(X))S_y=\nu(I(X))S_y$, so we may
also assume that $u_i^-=0$ for all $i$.  Thus we have $\nu'(f)=\sum_i
y^{v_i} \nu(g_i)$ where $v_i \in \mathbb Z^s$, and $g_i \in
I'(X)_{\ell \beta}$ with $\tilde{g}=\sum_i g_i \in \widetilde{S}$ because
$\tilde{g}=\nu'(f)|_{y_i=1}$ and thus $\tilde{g} \in (I(X))_{\beta}$.
Since $\deg(\nu'(f)) = \ell  \alpha$, and each $g_i$ is homogeneous, we
have $\deg(y^{v_i}\nu(g_i))=\ell \alpha$, so
$\nu'(g_i)=y^{v_i}\nu(g_i)$, and thus $\nu'(\tilde{g})=\nu'(f)$.
From this it follows that $\tilde{g}=f$, since $\nu'$ is injective by
construction, and thus $f \in I(X)_{\ell \beta}$, so $J = I(X)^{\beta}$ as
required.
\end{proof}

\begin{corollary} \label{c:VGIT}
The Chow and Hilbert quotients $X \git^{Ch} T^d$ and $X \git^{H} T^d$
and the GIT quotients $X \git_{\alpha} T^d$ are related by variation
of GIT quotient.
\end{corollary}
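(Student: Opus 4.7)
The plan is to assemble this corollary directly from the two preceding results of the section, namely Proposition~\ref{p:GIT} and Theorem~\ref{t:VGIT}. Both of these results realize various quotients of $X$ as GIT quotients of the same affine scheme $Y = Z(I) \subseteq \mathbb A^{r}$ by the same torus $H = \Hom(\DivCl(X_{\Sigma}), \K^{\times})$, with different choices of linearization $\alpha \in \DivCl(X_{\Sigma}) \otimes \mathbb R$. Once this common presentation is established, the conclusion is immediate from the standard fact that two GIT quotients of a common affine scheme by a common torus, taken with respect to different characters, are related by variation of GIT.

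More concretely, I would first invoke Proposition~\ref{p:GIT}, which gives $X \git^{\star}_n T^d = Y \git_{\alpha} H$ for every rational $\alpha$ in the relative interior of the cone $\mathcal{G}(X_{\Sigma})$. This handles both the Chow case and the Hilbert case simultaneously (one choice of $X_{\Sigma} \subseteq X_{\Sigma^{Ch}}$ and one choice of $X_{\Sigma} \subseteq X_{\Sigma^{H}}$). Next, for an arbitrary GIT linearization $\beta \in \mathbb N A$ defining a GIT quotient $X \git_{\beta} T^d$, I would apply Theorem~\ref{t:VGIT}: there exists $\alpha \in \mathbb Z^{l}$ with $\pi(\alpha) = \beta$ and satisfying the positivity bounds on $\alpha_i$ stated in that theorem, and for any such choice we have $X \git_{\beta} T^d \cong Y \git_{\alpha} H$.

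Having written all quotients in question as $Y \git_{\alpha} H$ for varying $\alpha$ in the character lattice of the single torus $H$ acting on the single affine scheme $Y$, I would then appeal to the definition of variation of GIT quotient to conclude. In particular, the GIT chamber decomposition of $\DivCl(X_{\Sigma}) \otimes \mathbb R$ for the $H$-action on $\mathbb A^{r}$ restricts, via $Y \hookrightarrow \mathbb A^{r}$, to a chamber decomposition governing the quotients $Y \git_{\alpha} H$, and all of the quotients $X \git^{\star}_{n} T^d$ and $X \git_{\beta} T^d$ lie in this common family.

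I do not expect a substantive obstacle: the only minor point of care is to verify that the hypothesis of Theorem~\ref{t:VGIT} on $\Sigma$ (that it contains every ray of $\Sigma^{\star}$ corresponding to a column of the Gale dual $D$, with no repeats) is compatible with the hypothesis of Proposition~\ref{p:GIT} (that $X_{\Sigma}$ is sufficiently large with torsion-free class group). Since one is always free to enlarge $X_{\Sigma}$ by adjoining additional rays of $\Sigma^{\star}$, I would simply fix at the outset a single $X_{\Sigma}$ meeting both conditions, after which the two statements combine without friction.
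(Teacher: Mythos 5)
Your overall strategy is the one the paper intends: Proposition~\ref{p:GIT} and Theorem~\ref{t:VGIT} already present all the quotients in question as $Y \git_{\alpha} H$ for the same $Y$ and $H$ with varying character $\alpha$, and the corollary is then formal. But there is one genuine gap, and it is exactly the point to which the paper devotes its entire proof: you assert ``there exists $\alpha \in \mathbb Z^{l}$ with $\pi(\alpha)=\beta$ and satisfying the positivity bounds'' without checking that such an $\alpha$ exists. The hypothesis of Theorem~\ref{t:VGIT} requires $\alpha_i \geq -\min\{(Cu)_i : Au=\beta,\ u \in \mathbb Q_{\geq 0}^{m+1}\}$ for each $i$, and this condition is only satisfiable if that minimum is finite. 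This is not automatic: the matrix $C$ can have negative entries in general (it is nonnegative only in special cases such as $\MOn$), so if the feasible region $\{u \in \mathbb Q_{\geq 0}^{m+1} : Au=\beta\}$ were unbounded, the linear functional $u \mapsto (Cu)_i$ could be unbounded below and no admissible $\alpha$ would exist. The needed finiteness follows from the standing normalization that the first row of $A$ consists of all ones (coming from the projectivity of the $T^d$-action), which forces $\sum_j u_j$ to be fixed on the feasible region, making it a polytope; the minimum of a linear functional on a polytope is finite. That observation is the content of the paper's proof and must be supplied.

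The remainder of your proposal is sound but is essentially a restatement of what Proposition~\ref{p:GIT} and Theorem~\ref{t:VGIT} already provide, and your closing remark about reconciling the hypotheses on $X_{\Sigma}$ (sufficiently large, torsion-free class group, containing all rays corresponding to columns of $D$ with no repeated columns) is a reasonable bookkeeping point that the paper handles by simply imposing those conditions at the start of the VGIT discussion; fixing one such $X_{\Sigma}$ at the outset, as you suggest, is fine.
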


\begin{proof}
It remains to check that an $\alpha$ satisfying the hypotheses of
Theorem~\ref{t:VGIT} actually exists.  This means checking that the
the set $\{ (Cu)_i : Au = \beta, u \in \mathbb Q_{\geq 0}^{m+1} \}$ is
bounded below so the minimum exists.  This follows from the fact that
$\{ u: Au = \beta, u \in \mathbb Q_{\geq 0}^{m+1} \}$ is a polytope,
since we required the first row of $A$ to consist of all ones.  Thus
the lower bound is the minimum of a linear functional on a polytope,
which is finite.
\end{proof}

\section{A toric variety containing $\overline{M}_{0,n}$}
\label{s:MOn}

In the remainder of the paper we apply the previous theorems to obtain
equations for $\MOn$.  Using the construction given in
Example~\ref{e:M0nasChow}, $\MOn$ is a subvariety of the normalization
of the Chow or Hilbert quotient of $\mathbb P^{{n \choose 2} -1}$ by
$T^{n-1}$.  In this section we describe a smooth normal toric variety
$X_{\Delta}$ that is a sufficiently large toric subvariety of both
$X_{\Sigma^{Ch}}$ and $X_{\Sigma^H}$.  We prove in Theorem
\ref{t:definingThm}, that $\overline{M}_{0,n}$ is contained in
$X_{\Delta}$.

We first define a simplicial complex and set of vectors that form the
underlying combinatorics of $\Delta$.  Recall that $[n]=\{1,\dots,
n\}$, and for $I \subset [n]$ we have $I^c = [n] \setminus I$.

\begin{definition} \label{d:R}
Let $\mathcal{I}=\{I \subset [n]: 1 \in I \mbox{ and } |I|,|I^c| \geq
2\}$.  The simplicial complex  $\widetilde{\Delta}$ has vertices $\mathcal
I$ and $\sigma \subseteq \mathcal I$ is a  simplex of
$\widetilde{\Delta}$ if for all $I,J \in \sigma$ we have $I \subseteq
J$, $J \subseteq I$, or $I \cup J = [n]$.

Let $\mathcal{E}=\{ij: 2 \le i < j \le n, ij \ne 23\}$ be an indexing
set for a basis of $\mathbb{R}^{{n\choose2}-n}$ and for each $I \in
\mathcal{I}$, define the vector
$$\mathbf{r}_I=(R_{ij,I})_{ij \in \mathcal{E}} \in \mathbb{Z}^{{n\choose2}-n}, \ \ R_{ij,I} = \left\{
\begin{array}{rl}
1   &  |I \cap \{ij\}|=0,  |I \cap \{2,3\}| > 0 \\
-1  &  |I \cap \{ij\}|>0,  |I \cap \{2,3\}| =0\\
0   & \text{otherwise}\\
\end{array}
\right\}.$$ Form the $({n \choose 2}-n) \times |\mathcal I|$ matrix $R$
with $I$th column $\mathbf{r}_I$.
\end{definition}

Note that the set $\mathcal I$ also labels the boundary divisors
$\delta_I$ of $\MOn$, and a simplex $\sigma$ lies in
$\widetilde{\Delta}$ precisely if the corresponding boundary divisors
intersect nontrivially.

\begin{proposition}\label{p:Delta}
The collection of cones $\{\pos(\mathbf{r}_I: I \in \sigma): \sigma \in
\Delta\}$ is an $(n-3)$-dimensional polyhedral fan $\Delta$ in
$\mathbb{R}^{{n\choose2}-n}$.  The associated toric variety
$X_{\Delta}$ is smooth, and the support of $\Delta$ is the tropical
variety of $M_{0,n} \subset T^{{n\choose2}-n}$.
\end{proposition}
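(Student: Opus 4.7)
The plan is to identify $\widetilde{\Delta}$ with the Billera--Holmes--Vogtmann space of phylogenetic trees on $[n]$ and then invoke the Speyer--Sturmfels description of the tropical Grassmannian. Since every $I \in \mathcal{I}$ contains $1$, the intersection $I \cap J$ of any two elements of $\mathcal{I}$ is nonempty, so the compatibility condition $I \subseteq J$, $J \subseteq I$, or $I \cup J = [n]$ is equivalent to the classical split-compatibility condition that at least one of the four sets $I \cap J$, $I \cap J^c$, $I^c \cap J$, $I^c \cap J^c$ is empty. By Buneman's theorem, the simplices of $\widetilde{\Delta}$ are then in bijection with unrooted trees having leaves labeled by $[n]$ and no vertex of degree $\leq 2$; the facets correspond to trivalent trees, each of which has exactly $n-3$ internal edges. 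This gives the expected dimension $n-3$.

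Next, to prove that the cones $\pos(\mathbf{r}_I : I \in \sigma)$ assemble into a fan and that $X_{\Delta}$ is smooth, I would show that for each facet $\sigma$ corresponding to a trivalent tree $T$, one can order the splits $I_1, \dots, I_{n-3} \in \sigma$ and select distinct coordinate indices $i_1 j_1, \dots, i_{n-3} j_{n-3} \in \mathcal{E}$ so that the $(n-3) \times (n-3)$ submatrix of $R$ with rows $i_k j_k$ and columns $I_k$ is upper triangular with $\pm 1$ on the diagonal. The ordering can be produced by repeatedly pruning a cherry from $T$: at each step the cherry is adjacent to a unique internal edge, whose associated split is $I_k$, and the two cherry-leaves supply a coordinate $i_k j_k \in \mathcal{E}$ that detects only $\mathbf{r}_{I_k}$ among the remaining split-vectors. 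The resulting unimodular submatrix simultaneously establishes that the $\mathbf{r}_I$ are $\mathbb Z$-linearly independent (so the cone is simplicial of dimension $n-3$), that the vectors $\mathbf{r}_I$ (which are primitive, since their entries lie in $\{-1,0,1\}$) extend to a $\mathbb Z$-basis of $\mathbb Z^{{n\choose2}-n}$ (so $X_{\Delta}$ is smooth), and that two such cones meet along the cone spanned by the common subsimplex, since the face structure of compatible-split collections is inherited.

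Finally, to identify the support of $\Delta$ with $\trop(M_{0,n}) \subset \mathbb{R}^{{n\choose 2}-n}$, I would invoke the Speyer--Sturmfels theorem: $\trop(G(2,n)) \subseteq \mathbb{R}^{{n\choose 2}}$ is the space of phylogenetic trees, carrying an $n$-dimensional lineality space spanned by the rows of the weight matrix $A$ from Example~\ref{e:M0nasChow}. By Example~\ref{e:M0nasChow} and Proposition~\ref{p:quotienteqtns}, $M_{0,n}$ sits in $T^{{n\choose 2}-n}$ as the image of $G^0(2,n)$ under the monomial map defined by the Gale dual $D$; since tropicalization commutes with such maps, $\trop(M_{0,n})$ is the image of $\trop(G(2,n))$ under the linear projection induced by $D$, which precisely kills the lineality space. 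A direct computation in the coordinates $\mathcal{E}$ then shows that the image of the ray of tree space corresponding to a split $I$ is exactly the vector $\mathbf{r}_I$ of Definition~\ref{d:R}, completing the identification.

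The main obstacle will be the explicit combinatorial bookkeeping in the second paragraph: one must verify that the cherry-pruning procedure can always be executed so that the selected coordinates $i_k j_k$ are distinct elements of $\mathcal{E}$ (in particular, avoiding the excluded pair $23$) and so that the triangular pattern survives the asymmetry introduced by the distinguished pair $\{2,3\}$ in the definition of $\mathbf{r}_I$. Once this unimodular submatrix is produced, the fan axioms and smoothness follow, and the tropical identification reduces to the Speyer--Sturmfels theorem together with the standard behaviour of tropicalization under torus quotients.
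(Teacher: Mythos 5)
The parts of your argument that identify the support of $\Delta$ with $\trop(M_{0,n})$ (Speyer--Sturmfels, the fact that tropicalization of a torus quotient is the quotient of the tropicalization, and the computation sending $\mathbf{e}_I$ to $\mathbf{r}_I$) follow the paper's route, and the dimension count via trivalent trees is also as in the paper (which cites Robinson--Whitehouse). The genuine gap is in your second paragraph, where the paper instead quotes Feichtner and Feichtner--Yuzvinsky (fans of nested set complexes are smooth). Your key claim --- that the two leaves of a pruned cherry give a coordinate in $\mathcal{E}$ that ``detects only $\mathbf{r}_{I_k}$ among the remaining split-vectors'' --- is false. Take $n=6$ and the caterpillar tree with cherries $\{2,3\}$ and $\{5,6\}$ and with $1$ and $4$ attached to the two middle vertices; its splits are $I=\{1,2,3,4\}$, $\{1,2,3\}$, $\{1,4,5,6\}$. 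The cherry $\{5,6\}$ gives the coordinate $56\in\mathcal{E}$ and the split $\{1,2,3,4\}$, but $R_{56,\{1,2,3\}}=1$ and $R_{56,\{1,4,5,6\}}=-1$, so this coordinate is nonzero on every remaining split. Worse, whenever the tree has the cherry $\{2,3\}$, the column $\mathbf{r}_{[n]\setminus\{2,3\}}$ has every entry equal to $-1$ (since $1\in I$ and every pair in $\mathcal{E}$ meets $I$), so no coordinate can single that split out at all. Thus the triangular structure does not come from cherry pruning with cherry-leaf coordinates, and the problem is not only the bookkeeping you flagged (distinctness, avoiding $1$ and $23$): the asserted vanishing itself fails, and this is exactly the unproved step on which your smoothness and simpliciality claims rest.

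The statement is nevertheless true and your strategy can be repaired, at the cost of a genuinely different ordering and choice of coordinates. Note that a split $I$ with $I\cap\{2,3\}\neq\emptyset$ has column equal to the indicator vector of the pairs contained in $I^c$, while a split with $I\cap\{2,3\}=\emptyset$ (these are the edges on the path from leaf $1$ to the median of $1,2,3$, and they form a chain) has column supported on \emph{all} pairs meeting $I$. Processing the first kind in order of increasing $|I^c|$ and choosing for each a pair with one leaf in each of the two subtrees hanging below the edge, and then processing the chain of the second kind from the edge nearest $1$ toward the median, choosing a pair with one leaf in the subtree attached at the intermediate vertex and one leaf beyond the edge, does yield an upper triangular submatrix with $\pm1$ diagonal; alternatively one can simply cite Feichtner--Yuzvinsky as the paper does. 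Finally, be aware that unimodularity of each maximal cone does not by itself give the fan axioms (distinct cones meeting along common faces); in your write-up this is in fact supplied by your third paragraph, since the cones are the images of the Speyer--Sturmfels cones under the quotient by the lineality space $\row(A_n)\otimes\mathbb{R}$, which is precisely how the paper obtains the fan structure.
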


This fan is well known in the literature as the {\em space of
 phylogenetic trees} (\cite[Section 4]{SpeyerSturmfels},
 \cite{Buneman}, \cite{Vogtmann}, \cite{BilleraHolmesVogtmann}).  To
 prove Proposition~\ref{p:Delta} and for results in the remainder of
 this section, we use the following notation.  

\begin{definition} \label{d:D}
Let $A_n$ be the $n \times {n\choose2}$ dimensional matrix with
$ij$-th column equal to $e_i + e_j$ and let $D$ be the $({n \choose 2}-n) \times {n \choose 2}$ matrix 
$$D=(D_{ij,kl})_{\stackrel{ij \in \mathcal{E}}{ 1 \le k < l \le n}},
\ \ D_{ij,kl} = \left\{
\begin{array}{rl}
1   &  kl=ij,     \mbox{or }  kl  \in \{12,13\},  |\{ij\} \cap \{kl\}|=\emptyset \\
-1  &  kl=23,  \mbox{or }  kl \in \{1i,1j\},  |\{23\} \cap \{kl\}|=\emptyset\\
0   & \text{otherwise}\\
\end{array}
\right\}.$$ Let $C$ be the $(|\mathcal I|-{n \choose 2}) \times {n
\choose 2}$ matrix with rows indexed by $I \in \mathcal I$ with $3
\leq |I| \leq n-3$ and entries
$$C_{I,ij} = \left\{ \begin{array}{ll}
    1 &  i,j \in I \\
    0 & \text{otherwise}
\end{array}
\right. .$$
A straightforward calculation from the definition of $D$ shows that 
\begin{equation} \label{eqtn:RM0n}
R=D(I|C^T).
\end{equation}

\end{definition}
The matrix $A_n$ is the vertex-edge incidence matrix for the complete
graph on $n$ vertices and $D$ is its Gale dual, so $DA_n^T=0$.  Note
that the square $({n \choose 2}-n) \times ({n \choose 2}-n)$ submatrix
of $D$ with columns indexed by $\mathcal E$ is the identity matrix.
In particular, $D$ has rank ${n\choose2}-n$.

\begin{proof}[Proof of Proposition~\ref{p:Delta}]
Let the lattice $L$ be the integer row space of
$A_n$.  The affine cone over the Grassmannian $G(2,n)$ in its
Pl\"ucker embedding is a subvariety of $\mathbb A^{{n \choose 2}}$ and
we denote by $AG^0(2,n)$ its intersection with $T^{n \choose 2}$.  In
\cite{SpeyerSturmfels}, Speyer and Sturmfels show that the tropical
variety of $AG^0(2,n)$ is a $(2n-3)$-dimensional fan in $\mathbb R^{{n
\choose 2}}$ with lineality space $L\otimes \mathbb R$, and the simplicial complex
corresponding to the fan structure on $\trop(AG^0(2,n))$ is
$\widetilde{\Delta}$.  Specifically, the cone corresponding to the
cone $\sigma \in \widetilde{\Delta}$ is $\pos(\mathbf{e}_{I} : I \in
\sigma)+L$, where
$$\mathbf{e}_{I} = \sum_{i \in I, j \not \in I} \mathbf{e}_{ij} \in
\mathbb Z^{n \choose 2}.$$

Recall from Example~\ref{e:M0nasChow} that $M_{0,n} =
G^0(2,n)/T^{n-1}$.  Using Proposition~\ref{p:quotienteqtns}, one can
show that for any $X \subseteq T^m$ that is invariant under the
action of a torus $T^d \subset T^m$, the tropical variety of $X/T^d
\subset T^m/T^d$ is equal to the quotient of the tropical variety of
$X \subset T^m$ by the tropical variety of $T^d$.  Thus
$\trop(M_{0,n}) = \trop(G^0(2,n))/\trop(T^{n-1}) =
\trop(AG^0(2,n))/L$.  To prove the first part of the proposition it
then suffices to show that the image of $\mathbf{e}_I$ in $\mathbb
R^{{n \choose 2}}/L$ is $\mathbf{r}_I$.  
 Since the matrix $D$ of Definition~\ref{d:D} is a Gale dual for
$A_n$, the map $\mathbb R^{n \choose 2} \rightarrow \mathbb R^{n
\choose 2}/L \cong \mathbb R^{{n \choose 2}-n}$ given by sending the
basis vector $\mathbf{e}_{ij}$ of $\mathbb R^{n \choose 2}$ to $(-1/2)
\mathbf{d}_{ij}$ is an isomorphism, where $\mathbf{d}_{ij}$ is the
$ij$th column of $D$.  The image of $\mathbf{e}_I$ under this map is
then $-1/2 \sum_{i \in I, j \not \in I} \mathbf{d}_{ij}$.  Since
$\sum_{1 \leq i <j \leq n} \mathbf{d}_{ij} = \mathbf{0}$, this is $1/2
\sum_{i, j \in I} \mathbf{d}_{ij} + 1/2 \sum_{i,j \in I^c}
\mathbf{d}_{ij}$.  Let $\ell_i$ denote the $i$th row of the matrix
$A_n$, and note that $\sum_{i \in I} \ell_i - \sum_{i \not \in I}
\ell_i = \sum_{i,j \in I} \mathbf{e}_{ij} - \sum_{i, j \in I^c}
\mathbf{e}_{ij}$, so $\sum_{i,j \in I} \mathbf{d}_{ij} = \sum_{i,j \in
I^c} \mathbf{d}_{ij}$.  Thus the image of $\mathbf{e}_I = \sum_{i,j
\in I} \mathbf{d}_{ij} = \sum_{i,j \in I^c} \mathbf{d}_{ij}$.  Since
this is equal to $(D(I|C^T))_{I} = \mathbf{r}_I$, we conclude that
$\mathbf{r}_I$ is the image of $\mathbf{e}_I$.  Note that this map is
induced from a map of lattices, as the relevant lattice in $\mathbb
R^{n \choose 2}$ is the index two sublattice of $\mathbb Z^{n \choose
2}$ with even coordinate sum.

The fact that $\Delta$ is simplicial of dimension $n-3$ is due to
\cite{RobinsonWhitehouse}.  Recall that a fan is smooth if for each
cone the intersection of the lattice with the linear span of that cone
is generated by the first lattice points on each ray of the cone.  The
fact that the fan is smooth follows from the work of Feichtner and
Yuzvinsky.  In \cite{Feichtner} it is shown that the fan $\Delta$ is
the one associated to the nested set complex for a related hyperplane
arrangement, while in \cite[Proposition 2]{FeichtnerYuzvinsky} it is
shown that the the fans associated to nested set complexes are smooth.  
\end{proof}

\begin{notation} Let $A_n$ and $D$ be the matrices described in Definition \ref{d:D}. 
 Recall that the {\em chamber complex} $\Sigma(D)$ is the polyhedral
fan in $\mathbb R^{{n\choose2}-n}$ subdividing the cone spanned by the
columns of $D$ obtained by intersecting all simplicial cones spanned
by columns of $D$.  This is equal to the secondary fan of $A_n$, and
thus to the fan of the toric variety $X_{\Sigma^{Ch}}$ (see
\cite{BFS}).  Recall also that the regular subdivision $\Delta_w$ of
the configuration of the columns $\mathbf{a}_{ij}$ of $A_n$
corresponding to a vector $w \in \mathbb R^{{n \choose 2}}$ has
$\pos(\mathbf{a}_{ij} : ij \in \sigma)$ as a cell for $\sigma
\subseteq \{ ij : 1 \leq i < j \leq n \}$ if and only if there is some
$\mathbf{c} \in \mathbb R^n$ such that $\mathbf{c} \cdot
\mathbf{a}_{ij} = w_{ij}$ for $ij \in \sigma$, and $\mathbf{c} \cdot
\mathbf{a}_{ij} < w_{ij}$ for $ij \not \in \sigma$ (see~\cite{GKZ} and
\cite{GBCP} or \cite{IndiaNotes} for background on regular
subdivisions).  We denote by $\mathbb N A_n$ the subsemigroup of
$\mathbb N^n$ generated by the columns of $A_n$, and by $\mathbb
R_{\geq 0} A_n$ the cone in $\mathbb R^n$ whose rays are the positive
spans of the columns of $A_n$.
\end{notation}

\begin{proposition} \label{p:deltasubfan} The fan ${\Delta}$ is a subfan of the 
fan $X_{\Sigma^{Ch}}$, so $X_{\Delta}$ is a toric subvariety of
$X_{\Sigma^{Ch}}$.
\end{proposition}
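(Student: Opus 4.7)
The plan is to use the description of $\Sigma^{Ch}$ as the secondary fan of $A_n$, equivalently the chamber complex $\Sigma(D)$, and to show that each cone $\tau_\sigma = \pos(\mathbf{r}_I : I \in \sigma)$ of $\Delta$ coincides with a cone of $\Sigma^{Ch}$. Cones of the secondary fan correspond bijectively to regular polyhedral subdivisions of the column configuration $\{\mathbf{a}_{ij} = e_i + e_j : 1 \leq i < j \leq n\}$ of $A_n$, and I will match each $\tau_\sigma$ with the subdivision of this configuration induced by a tree metric.

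First I would set up the combinatorics: for each simplex $\sigma \in \widetilde{\Delta}$ the elements of $\sigma$ are pairwise compatible splits of $[n]$ (pairwise nested, or having union $[n]$), which is precisely the condition for these splits to arise from an unrooted tree $T_\sigma$ on the leaf set $[n]$, with one internal edge per element of $\sigma$. Next, given a point $w$ in the relative interior of $\tau_\sigma$, I would lift it to the ambient $\mathbb{R}^{{n \choose 2}}$: writing $w = \sum_{I \in \sigma}\lambda_I\mathbf{r}_I$ with $\lambda_I > 0$, any preimage $\tilde{w}$ under the Gale projection $\mathbb{R}^{{n \choose 2}} \to \mathbb{R}^{{n \choose 2}-n}$ is unique up to the row space of $A_n$. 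Using the identity $\mathbf{r}_I = \sum_{i,j \in I}\mathbf{d}_{ij}$ established in the proof of Proposition~\ref{p:Delta} together with the Gale relation $A_n D^T = 0$, I would verify that $\tilde{w}_{ij}$ agrees, modulo a term of the form $c_i + c_j$, with the tree-metric distance $d_{T_\sigma}(i,j) = \sum_{I \in \sigma : I \text{ separates } i \text{ and } j}\lambda_I$.

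Then I would invoke the classical fact that the regular subdivision of the column configuration of $A_n$ (the vertex set of the hypersimplex) induced by a tree-metric lift depends only on the combinatorial tree type $T_\sigma$, not on the specific positive edge lengths $\lambda_I$, and that distinct trees yield distinct subdivisions. Since adding a vector of the form $(c_i + c_j)_{ij}$ to $\tilde{w}$ does not change the subdivision, this shows that $\tau_\sigma$ maps entirely into a single cone of $\Sigma^{Ch}$ whose interior it meets, and distinct $\sigma$'s map into distinct cones. Passing to a face of $\tau_\sigma$ corresponds to degenerating $T_\sigma$ by contracting edges (setting some $\lambda_I = 0$), which yields a proper coarsening of the subdivision and hence a face of the corresponding secondary cone. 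Consequently each $\tau_\sigma$ is exactly a cone of $\Sigma^{Ch}$, and $\Delta$ sits inside $\Sigma^{Ch}$ as a subfan.

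The main obstacle is the computation in the second paragraph and the appeal to the four-point condition in the third: one must verify carefully that the Gale-dual lift of $w$ really produces a tree metric, and that the secondary cones of tree-induced subdivisions are in bijection with tree topologies on $[n]$. Both are expected, essentially because tree metrics form the positive part of the Grassmannian tropical variety in the sense of Speyer--Sturmfels, but the bookkeeping in passing from the $\mathbf{r}_I$ to $\tilde{w}_{ij}$ requires some care.
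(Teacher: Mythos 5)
Your overall route is the same as the paper's --- identify $\Sigma^{Ch}$ with the secondary fan (chamber complex of $D$) and match each cone $\tau_\sigma=\pos(\mathbf{r}_I : I \in \sigma)$ with the regular subdivision induced by a tree-metric lift --- but there is a genuine gap at the final step. Granting the classical facts you invoke, your argument shows that $\relint(\tau_\sigma)$ lands inside the relative interior of a single secondary cone $N_\sigma$, that distinct trees give distinct $N_\sigma$, and that faces go to faces. From this you conclude that ``each $\tau_\sigma$ is exactly a cone of $\Sigma^{Ch}$,'' but that does not follow: $N_\sigma$ could strictly contain $\tau_\sigma$, for instance as a higher-dimensional cone whose relative interior contains $\relint(\tau_\sigma)$. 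The cones $\tau_\sigma$ have dimension at most $n-3$ inside $\mathbb{R}^{{n\choose 2}-n}$, while the secondary fan is complete, so a priori nothing prevents the secondary cone of the tree subdivision from being much larger; and ``subfan'' requires each cone of $\Delta$ to \emph{equal} a cone of $\Sigma^{Ch}$, not merely to be contained in one. The fact you cite --- that the subdivision induced by a tree metric depends only on the tree type, and that distinct trees give distinct subdivisions --- cannot close this, because the dangerous weight vectors are those that are not (modulo the lineality space) tree metrics at all and yet might still induce the same subdivision; injectivity on tree types says nothing about them.

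The missing content is exactly the reverse inclusion: every weight vector inducing the subdivision with cells $C_v$ (pairs $ij$ whose path in $T_\sigma$ passes through the internal vertex $v$) is, modulo the row space of $A_n$, a nonnegative combination of the $\mathbf{r}_I$ with $I \in \sigma$. The paper devotes the last paragraph of its proof to precisely this: it uses the facet description of $\sigma$ by the four-point-condition inequalities $w_{ij}+w_{kl} \geq w_{ik}+w_{jl} = w_{il}+w_{jk}$ (Speyer--Sturmfels, Theorem 4.2), and for each quadruple chooses an internal vertex $v$ lying on all four paths $ik$, $jl$, $il$, $jk$, so that a suitable lift of any vector in the chamber cone $\bigcap_v \pos(\mathbf{r}_{ij} : ij \notin \mathcal{C}_v)$ has $w_{ik}=w_{jl}=w_{il}=w_{jk}=0$ and $w_{ij}, w_{kl} \geq 0$, verifying the inequality and hence the containment of the chamber cone in $\sigma$. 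Some argument of this kind (or a split-decomposition style argument recovering the tree metric from the cell structure of the subdivision) must be added; as written, your ``consequently'' is a non sequitur. The remainder of your outline --- the lift computation, the identification of the subdivision with $\{C_v\}$ (Kapranov's Claim 1.3.9, which the paper reproves self-containedly), and the lower- versus upper-face sign bookkeeping you flag --- is sound in outline, modulo the care you already acknowledge.
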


\begin{proof}
  Let $\sigma$ be a top-dimensional
cone of $\Delta$.  Then $\sigma= \pos(\mathbf{r}_{I_1},\dots,
\mathbf{r}_{I_{n-3}} )$, where $I_i \subseteq I_j$, $I_j \subseteq
I_i$, or $I_i \cap I_j = [n]$.  There is a trivalent (phylogenetic)
tree $\tau$ with $n$ labeled leaves such that the $I_j$ correspond to
the splits obtained by deleting internal edges of $\tau$.

Since $\sigma$ is a cone in $V=\mathbb R^{{n \choose 2}} /
\mathrm{row}(A)$, we can chose a lift $w \in \widetilde{V}=\mathbb R^{{n \choose
2}}$ for a vector in $\sigma$, and thus consider the regular
subdivision $\Delta_w$ of the configuration $A_n$, which does not
depend on the choice of lift.

To show that $\sigma$ is a cone in the chamber complex of $D$ we first
characterize the subdivision $\Delta_w$ coming from a lift $w$ of a
vector in $\sigma$.  To an internal vertex $v$ of the phylogenetic
tree $\tau$ we associate the set $\mathcal C_v$ of pairs $ij$ such
that the path in $\tau$ between the vertices labeled $i$ and $j$
passes through $v$, and the cone $C_v=\pos(\mathbf{e}_{ij} : ij \in
\mathcal C_v) \subseteq \mathbb R^n$.
The cone $C_v$ is obtained by taking the cone over the polytopes
$M(\cong_v)$ of \cite[Remark 1.3.7]{Kapranov}.

  We claim that for any lift $w \in \mathbb R^{{n \choose 2}}$ of a
vector in $\sigma$ the subdivision $\Delta_w$ has cones $C_v$ for $v$
an internal vertex of $\tau$.  To see this, for each $\mathbf{r}_I \in
\sigma$ set $I'=[n] \setminus I$ if the path from leaf $i$ to leaf $j$
in $\tau$ passes through $v$ for some $i,j \in I$, and $I' = I$
otherwise.  This ensures that $ij \not \in \mathcal C_v$ for $i,j \in
I'$.  If $\sum_{i=1}^{n-3} a_i \mathbf{r}_{I_i}$ is a point in
$\sigma$ with $a_i>0$ for all $i$, then we can choose the lift $w =
\sum_{i=1}^{n-3} a_i \sum_{i,j \in I'_i } \mathbf{e}_{ij}$.  Since
$w_{ij}>0$ for $ij \not \in \mathcal C_v$,  and $w_{ij}=0$ for $ij \in
\mathcal C_v$, taking $\mathbf{c}=\mathbf{0}$ we see that $C_v$ is a
face of $\Delta_w$.

This shows that the given collection are cones in the subdivision
$\Delta_w$.  The fact that they cover all of the cone generated by the
$\mathbf{a}_{ij}$ is Claim 1.3.9 of \cite{Kapranov}.  For the readers'
convenience we give a self-contained proof.  To show that we are
not missing anything, it suffices to show that any $\mathbf{v} \in
\mathbb N A_n$ lies in $C_v$ for some $C_v$, which will show that the
$C_v$ cover $\mathbb R_{\geq 0} A_n$.  If $\mathbf{v}$ lies in
$\mathbb N A_n$ then there is a graph $\Gamma$ on $n$ vertices with degree
sequence $\mathbf{v}$.  We may assume that $\Gamma$ has the largest edge
sum out of all graphs with degree sequence $\mathbf{v}$, where an edge
$ij$ has weight the number of internal edges in the path between $i$ and $j$ in
the tree $\tau$.  Note that this means that if $ij$ and $kl$ are two
edges of $\Gamma$ with $|\{i,j,k,l\}|=4$ then the paths in $\tau$
corresponding to these two edges must cross, as otherwise we could get
a larger weight by replacing these two edges by the pair with the same
endpoints that do cross.  We claim that there is then some vertex $v
\in \tau$ for which the path in $\tau$ corresponding to each edge in
$\Gamma$ passes through $v$, which will show that $\mathbf{v} \in C_v$.
The statement is trivial if $\Gamma$ has at most two edges, since any two
paths must intersect.  The set of collections of edges of $\Gamma$ for
which the corresponding paths share a common vertex forms a simplicial
complex, so if the claim is false, we can find a subgraph $\Gamma'$ of $\Gamma$
for which there is no vertex of $\tau$ through which all of the
corresponding paths pass, but every proper subgraph of $\Gamma'$ has the
desired property ($\Gamma'$ is a minimal nonface of the simplicial
complex).  The subgraph $\Gamma'$ must have at least three edges.  Pick
three edges $e_1, e_2, e_3$ of $\Gamma'$, and let $v_i$ for $i \in
\{1,2,3\}$ be a vertex of $\tau$ for which the path corresponding to
each vertex of $\Gamma'$ except $e_i$ passes.  Since $\tau$ is a tree one
of these vertices lies in the path between the other two; without loss
of generality we assume that $v_2$ lies between $v_1$ and $v_3$.  But
the path corresponding to $e_2$ passes through $v_1$ and $v_3$ while
avoiding $v_2$, a contradiction.

This shows that the subdivision corresponding to a vector in the
interior of $\sigma$ is the same subdivision $\Delta_{\sigma}$ for any
vector in $\sigma$, and that $\sigma$ lies inside the cone $\cap_{v}
\pos(\mathbf{r}_{ij} : ij \not \in \mathcal C_v)$ of the chamber
complex of $D$.  To finish the proof, we show that this cone lies in
$\sigma$.  The cone $\sigma$ has the following facet description: for
each quadruple $i,j,k,l$ two of the pairs of paths $\{ij,kl\}$,
$\{ik,jl\}$, and $\{il,jk\}$ in the tree $\tau$ have the same combined
length, and one pair is shorter.  Without loss of generality we assume
that $ij, kl$ is the shorter pair.  This gives the inequality
$w_{ij}+w_{kl} \geq w_{ik}+w_{jl} = w_{il}+w_{jk}$.  The set of these
inequalities as $\{i,j,k,l\}$ ranges over all $4$-tuples of $[n]$
gives a description of the lift in $\mathbb R^{n \choose 2}$ of the
cone $\sigma$ (see \cite[Theorem 4.2]{SpeyerSturmfels}).  Note that
$\mathrm{row}(A)$ lies in this cone, so to show that a vector
$\mathbf{v}$ in the relative interior of $\cap_v \pos(\mathbf{r}_{ij}
: ij \not \in \mathcal C_v)$ lies in $\sigma$, it suffices to show
that for each inequality there is some lift of $\mathbf{v}$ to
$\mathbb R^{{n \choose 2}}$ that satisfies that inequality.  Given
such a $\mathbf{v}$, and a $4$-tuple $\{i,j,k,l\}$ giving the
inequality $w_{ij}+w_{kl} \geq w_{ik}+w_{jl} = w_{il}+w_{jk}$, pick a
vertex $v$ in $\tau$ that lies in all of the paths $ik$, $jl$, $il$,
and $jk$.  Then since $\mathbf{v} \in \mathrm{relint}(\pos(
\mathbf{r}_{ij} : ij \not \in \mathcal C_v))$, there is a lift $w$ of
$\mathbf{v}$ with $w_{ik}=w_{jl}=w_{il}=w_{jk} =0$, and $w_{ij},
w_{kl} \geq 0$, which satisfies the given inequality.  We conclude
that $\cap_{v} \pos(\mathbf{r}_{ij} : ij \not \in \mathcal C_v)
\subseteq \sigma$, and thus we have equality, so $\sigma$ is a cone in
the chamber complex of $D$.
\end{proof}

Recall that an open cone $\sigma \in \Sigma^H$ consists of $w$ for
which the saturation of $\inn_w(I_A)$ is constant, where we here
follow the standard convention that the leading form $\inn_w(f)$ of a
polynomial $f$ consists of terms of largest $w$-degree.

\begin{proposition} \label{p:deltaHsubfan} The fan $\Delta$ is a subfan of the fan $\Sigma^H$, so $X_{\Delta}$ is a toric subvariety of 
 $X_{\Sigma^{H}}$.
\end{proposition}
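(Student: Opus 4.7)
The plan is to show that each cone $\sigma$ of $\Delta$ is a cone of the saturated Gr\"obner fan $\Sigma^H$. Since $\Sigma^H$ refines the secondary fan $\Sigma^{Ch}$, and Proposition~\ref{p:deltasubfan} already places $\sigma$ among the cones of $\Sigma^{Ch}$, it suffices to prove that the saturated initial ideal
\[
  \bigl(\inn_w(I_{A_n}) : (\textstyle\prod_{ij} x_{ij})^\infty\bigr)
\]
is constant as $w$ ranges over $\mathrm{relint}(\sigma)$, since the cones of $\Sigma^H$ are exactly the loci where this saturation stays fixed.

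First I would fix a top-dimensional cone $\sigma \in \Delta$, corresponding to a trivalent phylogenetic tree $\tau$ on $n$ labeled leaves, together with a weight $w \in \mathrm{relint}(\sigma)$. From the proof of Proposition~\ref{p:deltasubfan} the regular subdivision $\Delta_w$ has maximal cells $C_v = \pos(\mathbf{e}_{ij} : ij \in \mathcal{C}_v)$ indexed by the internal vertices $v$ of $\tau$, and this combinatorial data is independent of the particular $w$ chosen in $\mathrm{relint}(\sigma)$. In particular, the radical $\sqrt{\inn_w(I_{A_n})}$, which is the Stanley--Reisner ideal of $\Delta_w$, already depends only on $\sigma$.

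Next I would identify the saturated initial ideal explicitly with the defining ideal of the broken toric scheme whose components are the closures in $\mathbb{P}^{{n\choose 2}-1}$ of the torus orbits attached to the cells $C_v$, glued along their shared faces. Concretely, I expect the identity
\[
  \bigl(\inn_w(I_{A_n}) : (\textstyle\prod_{ij} x_{ij})^\infty\bigr) = \bigcap_{v} P_v,
\]
where $P_v \subseteq \K[x_{ij}]$ is the ideal obtained from the toric ideal of the subconfiguration $\{\mathbf{a}_{ij} : ij \in \mathcal{C}_v\}$ by adjoining as generators the variables $x_{ij}$ with $ij \notin \mathcal{C}_v$. Since each $P_v$ is determined by $\mathcal{C}_v$, which depends only on $\tau$, the intersection depends only on $\sigma$. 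The same identification applied at possibly higher-valent vertices handles every $\sigma \in \Delta$, using the characterization of $\Delta_w$ for all $w \in |\Delta|$ from the proof of Proposition~\ref{p:deltasubfan}.

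The hard part will be establishing the second identification, because $I_{A_n}$ is not unimodular and $\inn_w(I_{A_n})$ generally retains $w$-dependent binomial generators supported on circuits that straddle more than one cell $C_v$. Showing that saturation by the product of all $x_{ij}$ removes exactly these $w$-dependent components, leaving an ideal with a purely combinatorial description in terms of $\tau$, should follow either from an explicit Gr\"obner basis computation adapted to the tree structure, or from the general flat-degeneration machinery for toric initial ideals in Sturmfels's \emph{Gr\"obner Bases and Convex Polytopes}. Once the top-dimensional case is handled, the lower-dimensional cones of $\Delta$ fall out from the closure relations between adjacent strata of the Hilbert fan.
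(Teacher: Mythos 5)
There is a genuine gap, and it sits in your very first reduction. The fan $\Sigma^H$ is the saturated Gr\"obner fan whose cones are indexed by the saturation of $\inn_w(I_{A_n})$ with respect to the irrelevant ideal $\langle x_{ij} : 1 \leq i < j \leq n \rangle$ (equivalently, by the subscheme of $\mathbb P^{{n \choose 2}-1}$ cut out by the initial degeneration), not by the saturation with respect to the product of the variables. Constancy of $(\inn_w(I_{A_n}) : (\prod_{ij} x_{ij})^\infty)$ on $\relint(\sigma)$ is strictly weaker: saturating by the product of all variables discards precisely the embedded and non-reduced structure supported in coordinate subspaces off the dense torus, and that is exactly the data which separates the Hilbert fan from the Chow/secondary fan. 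Your proposed right-hand side $\bigcap_v P_v$ is an intersection of primes determined by the subdivision $\Delta_w$, hence radical, so even if the identity were established it would carry no more information than $\sqrt{\inn_w(I_{A_n})}$, i.e., no more than Proposition~\ref{p:deltasubfan}, which you already invoke; $\relint(\sigma)$ could still cross a wall of $\Sigma^H$ where $\inn_w(I_{A_n})$ picks up an embedded component along a coordinate subspace while its torus-saturation and radical stay fixed. Moreover the identity itself---the step you flag as the hard part---is only asserted to follow from ``an explicit Gr\"obner basis computation'' or ``general flat-degeneration machinery''; with the correct (irrelevant-ideal) saturation it would amount to proving that the flat limit is the reduced union of the orbit closures of the cells $C_v$, which is at least as hard as the proposition being proved.

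For comparison, the paper's proof needs no saturation analysis at all: it shows the stronger statement that $\inn_w(I_{A_n})$ itself is constant for all lifts $w$ of points of $\relint(\sigma)$. A planar drawing of the tree $\tau$ fixes a circular order on $[n]$; by Theorem 9.1 of \cite{GBCP} the quadrics $x_{ij}x_{kl}-x_{ik}x_{jl}$ (with $ik$, $jl$ crossing and $ij$, $kl$ noncrossing in $K_n$) form a reduced Gr\"obner basis whose Gr\"obner cone has any such lift $w$ in its closure, and Corollary 1.9 of \cite{GBCP} then says the $w$-initial forms of these quadrics generate $\inn_w(I_{A_n})$; whether such an initial form is the whole binomial or a single monomial is decided by whether the two corresponding paths cross in $\tau$, a $w$-independent condition. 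This places $\sigma$ inside a single cone $\sigma'$ of $\Sigma^H$, and the maximality step $\sigma = \sigma'$ is the refinement argument you also use (which the paper justifies by noting that ideals with equal saturation have equal radical, hence lie in the same secondary cone, and then appealing to Proposition~\ref{p:deltasubfan}). So your overall skeleton is right, but to close the gap you must either prove constancy of the irrelevant-ideal saturation or, more simply, of the initial ideal itself along these lines.
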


\begin{proof}
Continuing with the notation from the proof of
Proposition~\ref{p:deltasubfan}, we first show that $\inn_w(I_{A_n})$
is constant for all lifts $w$ of vectors in a maximal cone $\sigma$ of
$\Delta$ corresponding to a phylogenetic tree $\tau$.  A planar
representation of $\tau$ with the vertices on a circle determines a
circular order on $[n]$.  Without loss of generality we may assume
that this is the standard increasing order.  Draw the complete graph
$K_n$ on the circle with the same order.  By \cite[Theorem 9.1]{GBCP}
a reduced Gr\"obner basis $\mathcal B$ for $I_{A_n}$ is given by
binomials of the form $x_{ij} x_{kl} - x_{ik} x_{jl}$, where the edges
$ik$ and $jl$ of $K_n$ cross but the edges $ij$ and $kl$ do not.  The
open Gr\"obner cone corresponding to this is $\mathcal C=\{ w \in
\mathbb R^{n \choose 2} : w_{ij} + w_{kl} > w_{ik} + w_{jl} \}$.  Note
that, as in the proof of Proposition~\ref{p:deltasubfan}, the lift of
any vector in $\sigma$ lies on the boundary of $\mathcal C$, and so
there is a term order $\prec$ for which
$\inn_{\prec}(\inn_w(I_{A_{n}})) = \langle x_{ij} x_{kl} : ij, kl
\text{ do not cross in } K_n \rangle$, where $w$ is a lift of any
vector in $\sigma$.  Thus by \cite[Corollary 1.9]{GBCP}, a Gr\"obner
basis for $I_{A_n}$ with respect to the weight order given by such a
$w$ is obtained by taking the initial terms with respect to $w$ of the
Gr\"obner basis $\mathcal B$.  For a binomial $x_{ij} x_{kl} - x_{ik}
x_{jl}$, where the edges $ik$ and $jl$ of $K_n$ cross but the edges
$ij$ and $kl$ do not, either the paths $ik$ and $jl$ also cross in
$\tau$, or they do not.  If the paths do cross, the lift $w$ of a
vector in $\sigma$ has $w_{ik}+w_{jl}=w_{ij}+w_{jl}$, and if they do
not there is an internal edge of $\tau$ in the paths $ik$ and $jl$ but
not the paths $ij$ and $kl$, so if $w$ lies in the interior of
$\sigma$ we have $w_{ik}+w_{jl} < w_{ij}+w_{jl}$.  This means that the
initial ideal is determined by the tree $\tau$, so $\inn_w(I_{A_n})$
is constant for all lifts $w$ of vectors in $\sigma$.  This shows that
$\sigma$ is contained in a cone $\sigma'$ of $\Sigma^H$.

It remains to show that $\sigma = \sigma'$.  This follows from
Proposition~\ref{p:deltasubfan}, which shows that $\sigma$ is a cone
in the secondary fan of $I_{A_n}$, since the Gr\"obner fan refines the
secondary fan \cite[Proposition 8.15]{GBCP}.  The fan $\Sigma^H$ is
obtained from the Gr\"obner fan of $I_{A_n}$ by amalgamating cones
corresponding to initial ideals with the same saturation with respect
to $\langle x_{ij} : 1 \leq i < j \leq n \rangle$, so the result
follows, since ideals with the same saturation have the same radical,
and thus the cones live in the same secondary cone.
\end{proof}

We are now able to show that $\overline{M}_{0,n}$ is a subvariety of
$X_{\Delta}$.  
As described in Example~\ref{e:M0nasChow} the moduli space $\MOn$ is
both the Chow and Hilbert quotient of the Grassmannian $G(2,n)$ by the
torus $T^{n-1}$.   

\begin{theorem} \label{t:definingThm} The toric variety $X_{\Delta}$
  is the union of those $T^{{n \choose 2}-n}$-orbits of
  $X_{\Sigma^{\star}}$ intersecting the closure of $M_{0,n}$ in this
  $X_{\Sigma^{\star}}$.  The closure of $M_{0,n} \subseteq T^{{n
  \choose 2}-n}$ inside $X_{\Delta}$ is equal to $\MOn$.
\end{theorem}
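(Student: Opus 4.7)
The plan is to reduce the theorem to the material already assembled in this section: Propositions~\ref{p:Delta}, \ref{p:deltasubfan}, and~\ref{p:deltaHsubfan}, together with Tevelev's criterion (Corollary~\ref{c:tevelevcorollary}) and the normality clause of Theorem~\ref{t:Eqmainthm}.

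For the first assertion, write $Y$ for the closure of $M_{0,n}$ inside $X_{\Sigma^{\star}}$. By Propositions~\ref{p:deltasubfan} and~\ref{p:deltaHsubfan}, $\Delta$ is a subfan of $\Sigma^{\star}$, so $X_{\Delta}$ embeds in $X_{\Sigma^{\star}}$ as a union of $T^{{n\choose 2}-n}$-orbits, one for each cone of $\Delta$. By Corollary~\ref{c:tevelevcorollary}, the orbit of $X_{\Sigma^{\star}}$ associated to a cone $\sigma\in\Sigma^{\star}$ meets $Y$ if and only if $\mathrm{relint}(\sigma)\cap\trop(M_{0,n})\neq\emptyset$. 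Proposition~\ref{p:Delta} identifies $|\Delta|$ with $\trop(M_{0,n})$, and a standard face-intersection argument (given $\tau\in\Delta$ containing such a point and $\sigma\in\Sigma^{\star}$, the common face $\sigma\cap\tau$ is forced to be $\sigma$ itself, hence $\sigma\in\Delta$) shows that the cones of $\Sigma^{\star}$ whose relative interior meets $\trop(M_{0,n})$ are exactly the cones of $\Delta$. This gives the first assertion.

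For the second assertion, Kapranov's theorem (recalled in Example~\ref{e:M0nasChow}) identifies $\MOn$ with both $G(2,n)\git^{Ch}T^{n-1}$ and $G(2,n)\git^{H}T^{n-1}$; these quotients sit by construction inside $\mathbb P^{{n\choose 2}-1}\git^{\star}T^{n-1}$, whose normalization is $X_{\Sigma^{\star}}$. Since $\MOn$ is smooth, hence irreducible and normal, the final clause of Theorem~\ref{t:Eqmainthm} yields $\MOn\git^{\star}_{n}T^{n-1}\cong \MOn$, and by definition this pullback is the closure of $(G(2,n)\cap T^{{n\choose 2}-1})/T^{n-1}=M_{0,n}$ inside $X_{\Sigma^{\star}}$. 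Combined with the first assertion, the sufficient largeness of $X_{\Delta}$ means the orbits of $X_{\Sigma^{\star}}\setminus X_{\Delta}$ are disjoint from this closure, so the closure of $M_{0,n}$ in $X_{\Delta}$ coincides with its closure in $X_{\Sigma^{\star}}$, which is $\MOn$.

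The genuinely hard work has already been carried out in the preceding propositions, namely the identification of $|\Delta|$ with $\trop(M_{0,n})$ and the verification that $\Delta$ appears as a subfan of both the secondary fan and the saturated Gr\"obner fan. The only subtle point left in the argument above is the normality step: one must rule out the possibility that pulling back the (a priori non-normal) quotient $G(2,n)\git^{\star}T^{n-1}$ from $\mathbb P^{{n\choose 2}-1}\git^{\star}T^{n-1}$ to its normalization $X_{\Sigma^{\star}}$ produces a multi-sheeted cover of $\MOn$ rather than $\MOn$ itself, and this is handled precisely by the smoothness of $\MOn$ together with the normality clause in Theorem~\ref{t:Eqmainthm}.
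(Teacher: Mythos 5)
Your proposal is correct and follows essentially the same route as the paper's proof: the first assertion from Propositions~\ref{p:Delta}, \ref{p:deltasubfan}, \ref{p:deltaHsubfan} together with Corollary~\ref{c:tevelevcorollary} (your explicit face-intersection argument is exactly what the paper leaves implicit), and the second from Kapranov's identification plus the irreducibility/normality clause of Theorem~\ref{t:Eqmainthm}. The only blemish is a notational slip: the pullback you invoke should be written $G(2,n)\git^{\star}_{n}T^{n-1}$, not $\MOn\git^{\star}_{n}T^{n-1}$.
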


We remark that the second part of this result was originally observed
by related methods in \cite[Theorem 5.5]{Tevelev}.

\begin{proof}[Proof of Theorem~\ref{t:definingThm}]
By Propositions~\ref{p:deltasubfan} and \ref{p:deltaHsubfan} we know
that $\Delta$ is a subfan of both $\Sigma^{Ch}$ and $\Sigma^H$, and by
Proposition~\ref{p:Delta} we know that the support of $\Delta$ is the tropical
variety of $M_{0,n} \subset T^{{n \choose 2}-n}$.
Corollary~\ref{c:tevelevcorollary} says that the closure of $M_{0,n}
\subset T^{{n \choose 2}-n}$ inside $X_{\Sigma^{\star}}$ intersects
the orbit corresponding to a cone $\sigma \in \Sigma^{\star}$ if and
only the tropical variety of $M_{0,n}$ intersects the interior of
$\sigma$.  Thus the orbits of $X_{\Sigma^{\star}}$ intersecting this
closure are precisely the orbits in the toric subvariety $X_{\Delta}$.
This proves the first assertion of the theorem.  For the second, note
that the closure of $M_{0,n}$ in $X_{\Delta}$ is 
$G(2,n) \git_n^{\star} T^{n-1}$, and
since $\MOn=G(2,n) \git^{\star} T^{n-1}$ is smooth and irreducible, this 
is equal to $\MOn$ by Theorem~\ref{t:Eqmainthm}.
\end{proof}

\begin{remark} We emphasize that although we know that the fans $\Sigma^{Ch}$
 and $\Sigma^H$ are the secondary and saturated Gr\"obner fans
respectively, we do not even know how many rays each has.  So while in
theory one could describe equations for $\overline{M}_{0,n}$ in the
Cox rings of these toric varieties, this is not possible in practice.
On the other hand, $\Delta$ has a completely explicit description for
all $n$.  In particular, this makes the Cox ring of $X_{\Delta}$
accessible and enables us to derive equations for $\overline{M}_{0,n}$
inside it.
\end{remark}

\section{Equations for $\overline{M}_{0,n}$} \label{equationssection}
\label{s:MOnProof}

In this section we apply Theorem~\ref{t:Eqmainthm} to to give
equations for $\overline{M}_{0,n}$ in the Cox ring of the toric
variety $X_{\Delta}$ described in Section~\ref{s:MOn}.

Recall that $\mathcal I = \{ I \subset [n]: 1 \in I, |I|, |I^c| \geq 2
\}$, and there is a ray $\pos(\mathbf{r}_I)$ of $\Delta$ for each $I
\in \mathcal I$.  The Cox ring of $X_{\Delta}$ is $S=\K[x_I : I \in
\mathcal I]$, with $\deg(x_I)=[D_I]\in \DivCl(X_{\Delta})$, where
$D_I$ is the torus-invariant divisor corresponding to the ray through
$\mathbf{r}_I$.  We construct the grading matrix as follows.  Set $b =
|\mathcal I|-{n \choose 2}$ for $n \geq 5$.  Let $G$ be the $(n+b)
\times |\mathcal I|$ matrix that is given in block form by
\begin{equation} \label{eqtn:G} G=\left( \begin{array}{rrr}
A_n & | & 0 \\ \hline
-C & | & I_{b} \\
\end{array}
\right) ,
\end{equation} where $0$ denotes the $n \times b$ zero matrix, $A_n$ and $C$ are as in Definition~\ref{d:D}, and  $I_{b}$ is
the $b \times b$ identity matrix.  For $n=4$ we set $G=(1 \, 1 \, 1)$.

As the following lemma shows, this form of $G$ is consistent with the
choice of grading matrix for the Cox ring of $X_{\Delta}$ suggested in
Equation~\ref{eqtn:GradingGIT}.  The lemma also shows that
$\DivCl(X_{\Delta})\cong \Pic(\MOn)$, so we first recall the
description of $\Pic(\MOn)$.  For $I \subset [n]$ with $1 \not \in I$
the notation $\mathbf{e}_I$ means the basis element $\mathbf{e}_{I^c}$
of $\mathbb Z^{|\mathcal I|}$.

\begin{proposition} \cite[Theorem 1]{KeelTAMS}
Let $W$ be the sublattice of $\mathbb Z^{|\mathcal I|}$ spanned by the
vectors
$$w_{ijkl}=\sum_{i,j \in I, k,l \not \in I} \mathbf{e}_I - \sum_{i,l \in I,
j, k \not \in I} \mathbf {e}_I$$
where $\{ i, j, k, l \} \subseteq [n]$ has size four.  The Picard
group of $\MOn$ is isomorphic to $\mathbb Z^{|\mathcal I|} / W$.  
\end{proposition}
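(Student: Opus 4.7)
My plan is to follow the standard two-step strategy: first produce enough generators by excision, then cut down by relations obtained from the forgetful morphisms to $\overline{M}_{0,4}$.

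First I would show that the classes $[\delta_I]$ for $I \in \mathcal I$ generate $\Pic(\MOn)$. The open stratum $M_{0,n} = \MOn \setminus \bigcup_I \delta_I$ parametrizes smooth genus zero curves with $n$ marked points; by normalizing the first three markings to $0$, $1$, $\infty$ on $\mathbb P^1$ one identifies $M_{0,n}$ with the complement of a hyperplane arrangement inside $\mathbb A^{n-3}$, namely the locus of $(t_4,\dots,t_n)$ with $t_i \neq 0, 1$ and $t_i \neq t_j$. This is an open subset of affine space, hence has trivial Picard group. The excision sequence
$$\bigoplus_{I \in \mathcal I} \mathbb Z\cdot [\delta_I] \longrightarrow \Pic(\MOn) \longrightarrow \Pic(M_{0,n}) \longrightarrow 0$$
then yields the desired surjection $\mathbb Z^{|\mathcal I|} \twoheadrightarrow \Pic(\MOn)$.

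Next I would produce the Keel relations from the forgetful morphisms. For each $4$-subset $\{i,j,k,l\} \subseteq [n]$, forgetting the remaining markings and stabilizing defines $\pi_{ijkl} \colon \MOn \to \overline{M}_{0,4} \cong \mathbb P^1$. The three boundary points of $\overline{M}_{0,4}$, labeled $p_{ij|kl}$, $p_{ik|jl}$, $p_{il|jk}$ by the three pair-partitions, are mutually linearly equivalent on $\mathbb P^1$. A direct analysis of $\pi_{ijkl}$ on the boundary (which contracts exactly the boundary divisors whose induced partition on $\{i,j,k,l\}$ is trivial) gives $\pi_{ijkl}^{*}(p_{ij|kl}) = \sum_{I : i,j \in I,\, k,l \notin I} \delta_I$. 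Equating $\pi_{ijkl}^{*}(p_{ij|kl})$ with $\pi_{ijkl}^{*}(p_{il|jk})$ inside $\Pic(\MOn)$ recovers exactly the relation $w_{ijkl}$, so $W$ maps into the kernel of $\mathbb Z^{|\mathcal I|} \twoheadrightarrow \Pic(\MOn)$.

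The main obstacle is then proving that $W$ is all of the kernel, since this requires a lower bound on $\Pic(\MOn)$ rather than just an upper bound. My preferred route is via Kapranov's realization of $\MOn$ as an iterated blow-up: starting from $\MOn \to \overline{M}_{0,n-1}$, which is the universal curve and expressible as a sequence of smooth blow-ups along loci that can be described combinatorially in terms of subsets $I$, one can write $\Pic(\MOn)$ inductively as $\Pic(\overline{M}_{0,n-1})$ extended by the classes of the new exceptional divisors. Tracking these identifications, each new generator corresponds to a boundary divisor $\delta_I$ with $n \in I$ or $n \notin I$, and the matching of ranks shows the map $\mathbb Z^{|\mathcal I|}/W \to \Pic(\MOn)$ is an isomorphism. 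A cleaner alternative, if one wishes to avoid the inductive bookkeeping, is to pair classes in $\mathbb Z^{|\mathcal I|}/W$ against a spanning family of $F$-curves (the one-dimensional boundary strata) and argue that the resulting intersection pairing is perfect; the delicate point is producing enough $F$-curves to separate the quotient, which again reduces to a combinatorial verification.
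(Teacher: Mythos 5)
The paper itself gives no proof of this proposition: it is quoted as Theorem 1 of Keel \cite{KeelTAMS}, so the only comparison available is with Keel's original argument. Your first two steps are fine and standard: $M_{0,n}$ is an open subset of $\mathbb A^{n-3}$, hence has trivial Picard (class) group, and excision on the smooth variety $\MOn$ gives generation by the $\delta_I$; pulling back the linear equivalences among the three boundary points of $\overline{M}_{0,4}\cong \mathbb P^1$ under the forgetful maps, together with the multiplicity-one formula $\pi_{ijkl}^*(p_{ij|kl})=\sum_{i,j\in I,\,k,l\notin I}\delta_I$ (which does require the local analysis you allude to, but is standard), shows $W\subseteq \ker\bigl(\mathbb Z^{|\mathcal I|}\twoheadrightarrow \Pic(\MOn)\bigr)$.

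The gap is in your last step, which is where the entire content of the theorem sits. First, a surjection $\mathbb Z^{|\mathcal I|}/W \twoheadrightarrow \Pic(\MOn)$ between groups of equal rank is not automatically an isomorphism: you would also need $\mathbb Z^{|\mathcal I|}/W$ to be torsion-free, and that is not known a priori (in the present paper, saturatedness of $W$ in Lemma \ref{l:ClequalsPic} is \emph{deduced from} Keel's theorem, not available before it); nor have you actually bounded the rank of $W$ from below, so even the rank count is not yet in place. Second, the blow-up input is misstated: the universal curve map $\MOn\to\overline{M}_{0,n-1}$ is not birational, so it is not a composition of blow-ups; what Keel proves is that $\MOn\to\overline{M}_{0,n-1}\times\overline{M}_{0,4}$ is a composition of blow-ups along smooth codimension-two centers (alternatively one can use Kapranov's blow-up of $\mathbb P^{n-3}$), and his proof then tracks $\Pic$ integrally through each blow-up via the blow-up formula, identifying each exceptional class with a specific $\delta_I$ and each new relation with a specific $w_{ijkl}$; this bookkeeping is exactly what yields both freeness of $\Pic(\MOn)$ and the equality of the relation lattice with $W$, and cannot be replaced by ``matching of ranks.'' Your alternative via pairing against $F$-curves can be made to work, but the deferred ``combinatorial verification'' --- that the pairing separates $\mathbb Z^{|\mathcal I|}/W$ integrally --- is the theorem in disguise rather than a routine check, so as written the proposal stops short of a proof.
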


\begin{lemma} \label{l:ClequalsPic}
For $n \geq 5$ the divisor class group of $X_{\Delta}$ is isomorphic to $\mathbb
Z^{b+n}$, with the image of $[D_I]$ under this isomorphism equal to
the column $\mathbf{g}_I$ of the matrix $G$ indexed by $I \in \mathcal
I$.  We have $\DivCl(X_{\Delta}) \cong \Pic(\MOn)$, with the
isomorphism taking $[D_I]$ to the boundary divisor $\delta_I$.
\end{lemma}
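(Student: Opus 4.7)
The plan is to establish the two assertions of the lemma separately, using the standard short exact sequence for the class group of a smooth toric variety together with the Gale duality between $A_n$ and $D$ for the first, and Keel's presentation of $\Pic(\MOn)$ for the second.

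Since $X_\Delta$ is smooth (Proposition~\ref{p:Delta}) of dimension ${n\choose 2}-n$, and the columns of $D$ appear among the columns of $R = D(I \mid C^T)$ so that the rays $\pos(\mathbf r_I)$ span $\mathbb R^{{n\choose 2}-n}$, the standard toric exact sequence
\[
0 \longrightarrow M \xrightarrow{R^T} \mathbb Z^{|\mathcal I|} \longrightarrow \DivCl(X_\Delta) \longrightarrow 0
\]
presents $\DivCl(X_\Delta) = \Pic(X_\Delta) = \mathbb Z^{|\mathcal I|}/\im(R^T)$ as a free abelian group of rank $|\mathcal I| - ({n \choose 2} - n) = n + b$, with $[D_I]$ the class of the basis vector $\mathbf e_I$. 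The first assertion will follow from showing that $G\colon \mathbb Z^{|\mathcal I|} \to \mathbb Z^{n+b}$ satisfies $\ker(G) = \im(R^T)$ and $\rank G = n + b$. Decomposing $\mathbf v = (\mathbf v_1, \mathbf v_2) \in \mathbb Z^{{n\choose 2}} \oplus \mathbb Z^b$, the block form of $G$ gives $G\mathbf v = (A_n \mathbf v_1, \, \mathbf v_2 - C\mathbf v_1)$. Because $D$ is a Gale dual for $A_n$, the integer kernel of $A_n$ equals the integer row-span of $D$, so $A_n \mathbf v_1 = 0$ iff $\mathbf v_1 = D^T \mathbf w$ for some $\mathbf w$; in that case $\mathbf v_2 = C D^T \mathbf w$, and by equation~(\ref{eqtn:RM0n}) the kernel equals $\im R^T$. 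The block form also gives $\rank G = \rank A_n + b = n + b$, so the induced map $\overline G \colon \DivCl(X_\Delta) \hookrightarrow \mathbb Z^{n+b}$ sends $[D_I]$ to $\mathbf g_I$ and identifies $\DivCl(X_\Delta)$ with the full-rank sublattice $\im G$ of $\mathbb Z^{n+b}$, which is itself free abelian of rank $n + b$.

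For the second assertion, I would use the inclusion $\pi\colon \MOn \hookrightarrow X_\Delta$ from Theorem~\ref{t:definingThm} and its pullback $\pi^*\colon \Pic(X_\Delta) \to \Pic(\MOn)$. The ray $\mathbf r_I$ of $\Delta$ corresponds, under the identification of $|\Delta|$ with the space of phylogenetic trees, to the tropical direction in which the marked rational curve degenerates to a nodal curve separating the markings labeled by $I$ from those labeled by $I^c$. The associated torus-invariant divisor $D_I$ therefore meets $\MOn$ in the reduced boundary divisor $\delta_I$, giving $\pi^*[D_I] = \delta_I$. By Keel's theorem~\cite{KeelTAMS}, $\Pic(\MOn)$ is free abelian of rank $|\mathcal I| - {n\choose 2} + n = n + b$ and is generated by the $\delta_I$; hence $\pi^*$ is a surjection between free abelian groups of equal rank, and is therefore an isomorphism. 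The main subtlety, which is the hardest step, is the identification $\pi^*[D_I] = \delta_I$ with the correct multiplicity; this relies on the tropical correspondence between cones of $\Delta$ and boundary strata of $\MOn$ that was already used implicitly in the proof of Theorem~\ref{t:definingThm}, while the rest of the argument reduces to linear algebra via Gale duality.
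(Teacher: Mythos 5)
Your first paragraph is correct and is essentially the paper's own argument: the toric exact sequence plus smoothness of $X_{\Delta}$ (Proposition~\ref{p:Delta}) gives $\DivCl(X_{\Delta})=\Pic(X_{\Delta})\cong\mathbb Z^{|\mathcal I|}/\im R^T$, and your computation $\ker_{\mathbb Z}G=\im_{\mathbb Z}R^T$ via the block form of $G$, Gale duality of $D$ and $A_n$, and the factorization $R=D(I\mid C^T)$ of Equation~(\ref{eqtn:RM0n}) is if anything slightly more complete than the paper's check that $GR^T=0$.

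For the second assertion, however, your route has a genuine gap. You deduce the isomorphism from the claim $\pi^*[D_I]=\delta_I$, but this is exactly the hard geometric input, and it does not follow from what you cite. Theorem~\ref{t:definingThm} and Corollary~\ref{c:tevelevcorollary} (Tevelev's lemma) give only set-theoretic information: they tell you which torus orbits of $X_{\Sigma^{\star}}$, hence of $X_{\Delta}$, meet the closure of $M_{0,n}$. They do not identify which boundary stratum of $\MOn$ sits in which orbit, and, more seriously, they say nothing about the scheme structure of $D_I\cap\MOn$, so the multiplicity-one statement $\pi^*[D_I]=\delta_I$ (rather than some positive multiple of $\delta_I$, or $\delta_I$ plus extra components) is asserted, not proved. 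Since the rest of your argument (the $\delta_I$ generate $\Pic(\MOn)$ by \cite{KeelTAMS}, equal ranks, surjection of free abelian groups of the same rank is an isomorphism) is fine, the entire weight rests on this unproven transversality/reducedness fact. The paper sidesteps it entirely: it never claims the isomorphism is the geometric pullback, but compares the two presentations $\DivCl(X_{\Delta})\cong\mathbb Z^{|\mathcal I|}/\im_{\mathbb Z}R^T$ and $\Pic(\MOn)\cong\mathbb Z^{|\mathcal I|}/W$, and shows $W=\im_{\mathbb Z}R^T$ by checking that each Keel relation $w_{ijkl}$ lies in $\ker_{\mathbb Z}G$ and then using that both lattices are saturated of rank ${n\choose 2}-n$. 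To repair your proof you would either need to supply an actual argument that the intersection of $\MOn$ with each $D_I$ is the reduced divisor $\delta_I$ (this is true, but requires input beyond Theorem~\ref{t:definingThm}), or replace the geometric step by the lattice comparison $W=\ker_{\mathbb Z}G$ as in the paper.
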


\begin{proof}
  The fact that $\DivCl(X_{\Delta}) \cong \mathbb Z^{b+n}$ follows
  from the short exact sequence ($\dag$) computing the class group of
  a toric variety \cite[p63]{Fulton}, and Proposition~\ref{p:Delta},
  since smooth toric varieties have torsion-free divisor class groups.
  To see that the image of $[D_I]$ is $\mathbf{g}_I$, the $I$th column
  of $G$, it suffices to show that the matrix $G$ is a Gale dual for
  the matrix $R$, so the exact sequence ($\dag$) is
$$0 \rightarrow M \stackrel{R^T}{\rightarrow} \mathbb Z^{|\mathcal I|}
\stackrel{G}{\rightarrow} \DivCl(X_{\Delta}) \rightarrow 0.
$$
Now $GR^T= G (I | C^T)^T D^T$, so 
$$GR^T = \left( \begin{array}{rrr} A_n & | & 0 \\ \hline
-C & | & I \\
\end{array}
\right)
\left(
\begin{array}{l}
I \\ \hline
C \\
\end{array}
\right)
D^T = \left( \begin{array}{c}
AD^T \\ \hline
0 \\
\end{array}
\right) = 0.$$

Finally, to show that $\DivCl(X_{\Delta}) \cong \Pic(\MOn)$, since
$\DivCl(X_{\Delta}) \cong \mathbb Z^{|\mathcal I|}/\im_{\mathbb Z}
R^T$, and $\Pic(\MOn) \cong \mathbb Z^{|\mathcal I|}/W$, it suffices
to show that $W=\im_{\mathbb Z} R^T$.  Since $\im_{\mathbb Z} R^T =
\ker_{\mathbb Z} G$, both lattices $W$ and $\im R^T$ are saturated,
and $\rank W = \rank R^T = {n \choose 2} -n$, it suffices to show that
each $w_{ijkl}$ lies in the kernel of $G$.  Now $w_{ijkl}$ restricted
to the sets $I$ with $|I|=2$ or $|I|=n-2$ is
$\mathbf{e}_{ij}+\mathbf{e}_{kl}-\mathbf{e}_{il}-\mathbf{e}_{jk}$,
which lies in $\ker(A_n)$.  Restricting $w_{ijkl}$ to $\{
\mathbf{e}_{st}: s,t \in I \} \cup \{\mathbf{e}_I \}$ we see that
$\left(
 \begin{array}{rrr} -C & | & I_{b} \\ \end{array} \right) w_{ijkl} =0$.  
  For example, if $1,i,j,k,l \in I$
then the sum is $-1-1+1+1+0=0$. Thus $w_{ijkl} \in \ker(G)$ as required.
\end{proof}

For $n=4$ we have $\DivCl(X_{\Delta}) \cong \Pic(\overline{M}_{0,4})
\cong \mathbb Z$, and the image of the $\mathbf{g}_{ij}$ is $1 \in
\mathbb Z$, which is also equal to each $[D_{ij}]$.

 We begin by proving the first part of Theorem~\ref{t:M0nmainthm}.
Recall that $\mathcal I = \{ I \subset [n]: 1 \in I, |I|, |I^c| \geq 2
\}$.  If $1 \not \in I \subseteq [n]$ then by $x_I$ we mean the
variable $x_{[n] \setminus I}$ in the Cox ring $S=\K[x_I : I \in
\mathcal I]$ of $X_{\Delta}$.

\begin{theorem} \label{t:M0nmainthm2} 
  For $n \geq 5$ the equations for $\MOn$ in the Cox ring of
$X_{\Delta}$ are obtained by homogenizing the Pl\"ucker relations with
respect to the grading of $S$ and then saturating by the product of
the variables of $S$.  Specifically, the ideal is
$$I(\overline{M}_{0,n})=\left( \left \langle \prod_{i,j \in I, k,l \not \in
I} x_I - \prod_{i,k \in I, j,l \not \in I} x_I + \prod_{i,l \in I, j,k
\not \in I} x_I \right \rangle : (\prod_{I} x_I)^{\infty}\right),$$ where the
generating set runs over all $\{i,j,k,l\}$ with $1 \leq i < j < k <l
\leq n$.
\end{theorem}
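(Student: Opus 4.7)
The plan is to apply Theorem~\ref{t:Eqmainthm} to $X = G(2,n) \subset \mathbb{P}^{\binom{n}{2}-1}$ with the $T^{n-1}$-action from Example~\ref{e:M0nasChow} and with $X_{\Sigma} = X_{\Delta}$. Theorem~\ref{t:definingThm} already shows that $X_{\Delta}$ is a sufficiently large toric subvariety of $X_{\Sigma^{\star}}$ and that $\overline{M}_{0,n}$ is the closure of $M_{0,n}$ in $X_{\Delta}$. Since $\overline{M}_{0,n}$ is smooth (hence normal) and irreducible, the final sentence of Theorem~\ref{t:Eqmainthm} applies, giving
$$I(\overline{M}_{0,n}) = \left(\langle \nu(p_{ijkl}) : 1 \le i<j<k<l \le n \rangle : \Bigl(\prod_{I} x_I\Bigr)^{\!\infty}\right),$$
where $p_{ijkl} = x_{ij}x_{kl} - x_{ik}x_{jl} + x_{il}x_{jk}$ are the Pl\"ucker generators of $I_{2,n}$.

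By Equation~\eqref{eqtn:RM0n}, $R = D(I|C^T)$, so we may take $V = (I|C^T)$, and Remark~\ref{r:homogenization} identifies $\nu$ with homogenization with respect to the $\DivCl(X_{\Delta})$-grading of $S$. Explicitly, $\nu(x_{ab}) = \prod_{J \in \mathcal{I}} x_J^{V_{(ab),J}}$, where $V_{(ab),J} = 1$ either when $J$ (or its complement, under the convention) represents the pair $\{a,b\}$ in the identity block, or when $3 \le |J| \le n-3$ and $a,b \in J$ in the $C^T$ block; otherwise $V_{(ab),J}=0$.

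The key step is to show, for each quadruple $\{i,j,k,l\}$, that $\nu(p_{ijkl}) = m_{ijkl} \cdot h_{ijkl}$, where $h_{ijkl}$ is the homogenized Pl\"ucker relation in the statement and $m_{ijkl}$ is a single monomial in the $x_J$ dividing each of the three terms $\nu(x_{ab}x_{cd})$. For middle $J$ let $s = |J \cap \{i,j,k,l\}|$; a short case analysis on $s$ shows that the common multiplicity of $x_J$ across the three Pl\"ucker terms is $0$ for $s \le 2$, $1$ for $s=3$, and $2$ for $s=4$. These common multiplicities define $m_{ijkl}$. The size-$2$ and size-$(n-2)$ parts of $\nu$, together with the middle contributions coming from $s=2$ with $J \cap \{i,j,k,l\}$ equal to the distinguished pair of each term, then reproduce exactly the products $\prod_{i,j\in I,\,k,l\notin I} x_I$, $\prod_{i,k\in I,\,j,l\notin I} x_I$, and $\prod_{i,l\in I,\,j,k\notin I} x_I$, once one applies the convention $x_I = x_{I^c}$.

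Once this factorization is established, since $m_{ijkl}$ is a product of variables in $S$, we have $\bigl(\langle \nu(p_{ijkl})\rangle : (\prod_I x_I)^{\infty}\bigr) = \bigl(\langle h_{ijkl}\rangle : (\prod_I x_I)^{\infty}\bigr)$, and the theorem follows. The main obstacle is the bookkeeping in the key step: one must simultaneously track how each $J\in\mathcal{I}$ contributes in the three size regimes, correctly account for the $I \leftrightarrow I^c$ convention when $1 \notin I$, and verify the $s=3,4$ multiplicities agree across all three Pl\"ucker monomials. The latter uses the combinatorial fact that when $|J \cap \{i,j,k,l\}| = 3$, exactly one pair from each of the three perfect matchings $\{ij,kl\},\{ik,jl\},\{il,jk\}$ of $\{i,j,k,l\}$ is contained in $J$, yielding identical contributions from each term.
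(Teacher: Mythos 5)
Your proposal is correct and follows essentially the same route as the paper: invoke Theorem~\ref{t:definingThm} to justify applying Theorem~\ref{t:Eqmainthm} with $X_{\Sigma}=X_{\Delta}$ and $V=(I|C^T)$, check that $\nu(p_{ijkl})$ equals the homogenized Pl\"ucker relation times a monomial in the $x_J$, and absorb that monomial in the saturation. Your case analysis on $s=|J\cap\{i,j,k,l\}|$ (common multiplicity $0,1,2$ for $s\le 2$, $s=3$, $s=4$) is just a more explicit bookkeeping of the factorization the paper performs, and your multiplicities are correct.
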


Before proving the theorem, we first find equations for the
intersection $M_{0,n}$ of $\overline{M}_{0,n}$ with the torus $\mathbb
T=T^{{n \choose 2}-1}/T^{n-1} \cong (\K^{\times})^{{n \choose 2}-n}$
of the toric variety $X_{\Delta}$.  The coordinates for $\mathbb T$
are labeled by $\mathcal E=\{ij: 2 \le i < j \le n, \ \ ij \ne
23\}$.  Recall that the ideal of $G(2,n) \subset \mathbb P^{{n \choose 2}-1}$ is generated by the Pl\"ucker relations $p_{ijkl}=x_{ij}x_{kl}-x_{ik}x_{jl}+x_{il}x_{jk}$:
$$I_{2,n} = \langle x_{ij}x_{kl}-x_{ik}x_{jl}+x_{il}x_{jk} : 1 \leq
i<j<k<l \leq n \rangle.$$

\begin{proposition} \label{l:idealintorus}
The intersection $M_{0,n}=(Z(I_{2,n})\cap T^{{n\choose 2}})/T^n
\subseteq \mathbb T$ is cut out by the equations
$$J=\langle z_{kl}-z_{2l}+z_{2k} : 3 \leq k \leq l \leq n \rangle
\subseteq \K[z_{ij}^{\pm 1} : ij \in \mathcal E],$$ where
we set set $z_{23}=1$.
\end{proposition}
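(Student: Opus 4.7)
The plan is to apply Proposition~\ref{p:quotienteqtns} to $Y = G(2,n) \cap T^{{n \choose 2}}$ with $T^n$ acting via the matrix $A_n$. This identifies the ideal of $M_{0,n}$ in $\K[z_{ij}^{\pm 1} : ij \in \mathcal{E}]$ with $\phi^{-1}(I_{2,n})$, where $\phi$ is the monomial map whose exponent vectors are the rows of the Gale dual matrix $D$ of Definition~\ref{d:D}. I then show $J = \phi^{-1}(I_{2,n})$ via containment in one direction and a dimension-plus-primality argument in the other.

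First I would read off $\phi(z_{ij})$ from the $ij$-th row of $D$. A short case analysis based on whether $i,j \in \{2,3\}$ yields the three explicit formulas
\begin{align*}
\phi(z_{2l}) &= \frac{x_{13}\, x_{2l}}{x_{23}\, x_{1l}}, \quad l \geq 4, \\
\phi(z_{3l}) &= \frac{x_{12}\, x_{3l}}{x_{23}\, x_{1l}}, \quad l \geq 4, \\
\phi(z_{kl}) &= \frac{x_{12}\, x_{13}\, x_{kl}}{x_{23}\, x_{1k}\, x_{1l}}, \quad 4 \leq k < l,
\end{align*}
consistent with the symbolic convention $\phi(z_{23}) = 1$ (the formula for a row indexed by $23$ would give $x_{23}/x_{23} = 1$).

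Second I would verify $J \subseteq \phi^{-1}(I_{2,n})$ by direct computation. For $3 \leq k < l \leq n$, pulling out the common Laurent monomial in $\phi(z_{kl} - z_{2l} + z_{2k})$ gives, in the two cases $k=3$ and $k \geq 4$ respectively,
$$\phi(z_{3l} - z_{2l} + 1) = \frac{p_{1,2,3,l}}{x_{23}\, x_{1l}}, \qquad \phi(z_{kl} - z_{2l} + z_{2k}) = \frac{x_{13}\, p_{1,2,k,l}}{x_{23}\, x_{1k}\, x_{1l}},$$
where $p_{1,2,k,l} = x_{12}x_{kl} - x_{1k}x_{2l} + x_{1l}x_{2k}$ is a Pl\"ucker generator and the prefactors are units in $\K[x^{\pm 1}]$. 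Hence each generator of $J$ lies in $\phi^{-1}(I_{2,n})$.

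Finally I would establish equality by a dimension count. The generators of $J$ are linearly independent (each introduces a distinct $z_{kl}$ with $k \geq 3$) and allow elimination of every such $z_{kl}$ in favor of the $n-3$ variables $z_{24},\dots,z_{2n}$; consequently $\K[z^{\pm 1}]/J$ is a localization of a polynomial ring in $n-3$ variables, hence $J$ is prime of Krull dimension $n-3$. On the other hand $\phi^{-1}(I_{2,n})$ is the ideal of the irreducible $(n-3)$-dimensional variety $M_{0,n}$, so it is a prime of the same dimension. Two comparable primes of equal dimension must coincide, giving $J = \phi^{-1}(I_{2,n})$. The main obstacle is the explicit bookkeeping in the second step, recognizing that the linear combination $z_{kl} - z_{2l} + z_{2k}$ is exactly the right one to make all monomial ``noise'' in $\phi$ factor out and leave a single Pl\"ucker relation $p_{1,2,k,l}$; once that identification is in hand, the rest is formal.
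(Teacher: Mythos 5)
Your proof is correct, and it diverges from the paper's argument in the second half. Both you and the paper start from Proposition~\ref{p:quotienteqtns}, identifying the ideal of $M_{0,n}$ in $\K[z_{ij}^{\pm 1}]$ with $\phi^{-1}(I_{2,n})$, and your explicit formulas for $\phi(z_{ij})$ and the identities $\phi(z_{3l}-z_{2l}+1)=p_{1,2,3,l}/(x_{23}x_{1l})$, $\phi(z_{kl}-z_{2l}+z_{2k})=x_{13}\,p_{1,2,k,l}/(x_{23}x_{1k}x_{1l})$ check out. The paper, however, proves the reverse inclusion at the level of generators: it first shows that $\phi^{-1}(I_{2,n})$ equals the ideal $J'$ generated by \emph{all} the quotient Pl\"ucker quadrics $z_{ij}z_{kl}-z_{ik}z_{jl}+z_{il}z_{jk}$ (via a case analysis showing $\phi$ sends each one to a unit multiple of the corresponding Pl\"ucker relation, so the images generate $I_{2,n}$ in the Laurent ring), and then shows $J'=J$ by an explicit polynomial identity writing the general quadric as a combination of the linear relations. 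You instead verify only $J\subseteq\phi^{-1}(I_{2,n})$ (using just the relations $p_{1,2,k,l}$) and close the gap abstractly: eliminating each $z_{kl}$, $k\geq 3$, exhibits $\K[z^{\pm 1}]/J$ as a localization of a Laurent ring in $z_{24},\dots,z_{2n}$, so $J$ is prime with $\dim \K[z^{\pm1}]/J=n-3$; since $\phi^{-1}(I_{2,n})$ is also prime (it is the contraction of the prime $I_{2,n}\K[x^{\pm1}]$ -- a cleaner justification than invoking irreducibility of $M_{0,n}$, though that works too) with quotient of the same dimension, and comparable primes in an affine domain with equal-dimensional quotients coincide, you get equality. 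Your route is shorter and avoids both the full case analysis over all quadruples and the explicit syzygy; the cost is that it imports the irreducibility and dimension count for $M_{0,n}$ plus a commutative-algebra dimension argument, whereas the paper's argument is purely constructive and, as a byproduct, records that all the quadratic relations are consequences of the linear ones -- a fact in the spirit of the homogenization results used later.
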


\begin{proof}
We first show that $(Z(I_{2,n}) \cap T^{n \choose 2})/T^n$ is defined
by the ideal
$$J'=\langle z_{ij}z_{kl}-z_{ik}z_{jl}+z_{il}z_{jk} : 1 \leq i < j <k<l
\leq n \rangle \subseteq \K[z_{ij}^{\pm 1} : ij \in \mathcal E],$$
where we set $z_{ij}=1$ when $ij \not \in \mathcal E$.  The content of
Proposition~\ref{p:quotienteqtns} is that the relevant ideal is
$\phi^{-1}(I_{2,n})$, where $\phi \colon \K[z_{ij}^{\pm 1} : ij \in
\mathcal E] \rightarrow \K[x_{ij}^{\pm 1} : 1 \leq i < j \leq n]$ is given by
$\phi(z_{ij})=\prod_{kl} x_{kl}^{D_{ij,kl}}$.

Since the map $\phi$ is an injection, to show that $J'$ is the desired
ideal we just need to show that the generators of $J'$ are taken to a
generating set for $I_{2,n} \subseteq \K[x_{ij}^{\pm 1}]$ by $\phi$.
When $ij \in \mathcal E$ we have 
$$\phi(z_{ij})  = \left\{
\begin{array}{ll}
(x_{ij}x_{12}x_{13})/(x_{1i}x_{1j}x_{23}) & i,j \geq 4 \\
(x_{2j}x_{13})/(x_{1j}x_{23}) & i=2\\
(x_{3j}x_{12})/(x_{1j}x_{23}) & i=3\\
\end{array}
\right. .
$$ The proof breaks down into several cases, depending on how many of
$1,2,3$ lie in $\{i,j,k,l\}$.  For example, if $1,2,3 \not \in
\{i,j,k,l\}$, then $z_{ij}z_{kl}-z_{ik}z_{jl}+z_{il}z_{jk} =
(x_{ij}x_{kl}-x_{ik}x_{jl}+x_{il}x_{jk})(x_{12}x_{13})^2/(x_{1i}x_{1j}x_{1k}x_{1l}x_{23}^2)$.
The other cases to check are:
\begin{enumerate}
\item $i=1$, $2,3 \not \in \{j,k,l\}$,
\item $i=1$, $j=2$, $3 \not \in \{k,l\}$, or $j=3$, $2 \not \in \{k,l\}$,
\item $i=1$, $j=2$, $k=3$,
\item $i=2$, $1,3 \not \in \{j,k,l\}$ or $j=3$, $1,2 \not \in \{i,k,l\}$.
\item $i=2$, $j=3$, $1 \not \in \{k,l\}$.
\end{enumerate}
In every case we see that the polynomial
$\phi(z_{ij}z_{kl}-z_{ik}z_{jl}+z_{il}z_{jk})$ is equal to
$x_{ij}x_{kl}-x_{ik}x_{jl}+x_{il}x_{jk}$ times a monomial in the
$x_{mn}$.  This shows that $\phi$ takes a generating set for $J'$ to a
generating set for $I_{2,n} \subset \K[x_{ij}^{\pm 1} : 1 \leq i < j
\leq n]$, and thus $J'$ is the ideal of $(Z(I_{2,n})\cap T^{{n\choose
2}})/T^n$

To see that $J=J'$, it suffices to show that all other generators of
$J'$ lie in the ideal generated by these linear ones.  Indeed,
\begin{align*}
z_{ij}z_{kl}-z_{ik}z_{jl}+z_{il}z_{jk} =
z_{ij} (z_{kl}-z_{2l}+z_{2k})  - z_{ik} (z_{jl}-z_{2l}+z_{2j}) 
+ z_{il} (z_{jk}-z_{2k}+z_{2j}) \\
+ (z_{2l}-z_{2k}) (z_{ij}-z_{2j}+z_{2i}) 
+ (z_{2k}-z_{2j}) (z_{il}-z_{2l}+z_{2i}) + (z_{2j}-z_{2l}) (z_{ik}-z_{2k}+z_{2i}). 
\end{align*} \end{proof}

\begin{remark}
 Consider the ideal $\widetilde{J} \subset \K[z_{ij} : ij \in \mathcal
E \cup \{23\}]$ obtained by homogenizing the ideal $J$ by adding the
variable $z_{23}$.  The variety  $Z(\widetilde{J}) \subseteq \mathbb
P^{{n-1 \choose 2}-1}$ is the linear subspace equal to the row space
of the $(n-1) \times {n -1 \choose 2}$ matrix $\widetilde{A}$ whose
columns are the positive roots of the root system $A_{n-2}$.
Specifically, the rows of $\widetilde{A}$ are indexed by $2, \dots,
n$, and the columns are indexed by $\{\{i,j\} : 2 \leq i <j \leq n
\}$, with $\widetilde{A}_{i,\{j,k\}}$ equal to $1$ if $i=j$, $-1$ if
$i=k$, and zero otherwise. The variety $M_{0,n}$ is the intersection
of $Z(\widetilde{J}) \subseteq \mathbb P^{{n-1 \choose 2}-1}$ with the
torus $T^{{n-1 \choose 2}-1}$ of $\mathbb P^{{n-1 \choose 2}-1}$.
This exhibits $M_{0,n}$ as a hyperplane complement and as a very affine
variety in its intrinsic torus~\cite{Tevelev}.
\end{remark}

\begin{proof}[Proof of Theorem \ref{t:M0nmainthm2}]
By Theorem~\ref{t:definingThm} $X_{\Delta}$ is a sufficiently large
toric subvariety of $X_{\Sigma^{\star}}$, so Theorem~\ref{t:Eqmainthm}
describes how to get equations for $\MOn$ inside $X_{\Delta}$.
Equation~\ref{eqtn:RM0n} defines $V=(I_{n \choose 2} | C^T)$, so the
map $\nu \colon \K[x_{ij} : 1 \leq i < j \leq n] \rightarrow
\K[x_I^{\pm 1} : I \in \mathcal I]$ of Theorem~\ref{t:Eqmainthm} is
given by $\nu(x_{ij}) = x_{ij} \prod_{i,j \in I} x_I$, where the
product is over $I \in \mathcal I$ with $1 \in I$, and $3 \leq |I|
\leq n-3$.  Thus for $i,j,k,l$ distinct, $\nu(x_{ij}x_{kl}) =
x_{ij}x_{kl} \prod_{ij \in I} x_I \prod_{kl \in I} x_I$, with the same
restrictions on the products.  Using the convention $x_I = x_{[n]
\setminus I}$, we can write this as $\nu(x_{ij}x_{kl})= x_{ij}x_{kl}
\prod_{ij \in I, kl \not \in I} x_I \prod_{1 \in I, |\{i,j,k,l\} \cap
I| \geq 3} x_I$, where here the products are over all $I \in \mathcal
I$ with $3 \leq |I| \leq n-3$ (no restriction that $1 \in I$).  So
$\nu(p_{ijkl}) = (\prod_{ij \in I, kl \not \in I} x_I - \prod_{ik \in
I, jl \not \in I} x_I + \prod_{il \in I, jk \not \in I} x_I) \prod_{1
\in I, |\{i,j,k,l\} \cap I| \geq 3} x_I$.  Note that $\nu(p_{ijkl})$
is already a polynomial, so there is no need to clear denominators.
Thus by Theorem~\ref{t:Eqmainthm} the ideal $I_{\MOn}$ in the Cox ring
of $X_{\Delta}$ is given by:
\begin{align*} I_{\MOn} & = & \left(\left \langle (\prod_{ij \in I, kl \not \in I} x_I - \prod_{ik
\in I, jl \not \in I} x_I + \prod_{il \in I, jk \not \in I} x_I)
(\prod_{1 \in I |\{i,j,k,l\} \cap I| \geq 3} x_I) \right \rangle : (\prod_{I \in \mathcal I} x_I)^{\infty} \right)\\ 
& = & \left(\left\langle
\prod_{ij \in I, kl \not \in I} x_I - \prod_{ik \in I, jl \not \in I}
x_I + \prod_{il \in I, jk \not \in I} x_I \right \rangle : (\prod_{I \in
\mathcal I} x_I)^{\infty} \right), \\
\end{align*}
where the generating sets run over all $\{i,j,k,l\}$ with $1 \leq i
<j<k<l \leq n$.
\end{proof}

\begin{example}
\begin{enumerate}
\item The case $n=5$ is covered in Examples~\ref{e:torusquotient},
\ref{e:finalquadric}, and \ref{e:M05GIT}.

\item When $n=6$, the ideal $I_{2,6}$ is generated by the equations
$p_{ijkl}=x_{ij}x_{kl} - x_{ik}x_{jl} + x_{il}x_{jk}$, where $1 \leq i
< j < k <l \leq 6$ in the ring $\K[x_{ij} : 1 \leq i < j \leq 6]$.
The Cox ring of $X_{\Delta}$ is $\K[x_I : I \in \mathcal I]$.
Applying the change of coordinates given by the matrix $R$ the
$p_{ijkl}$ become $\widetilde{p}_{ijkl}=x_{ij}x_{kl}x_{ijm} x_{ijn} -
x_{ik}x_{jl}x_{ikm} x_{ikn} + x_{il}x_{jk}x_{ilm} x_{iln}$, where
$\{i,j,k,l,m,n\}=\{1,2,3,4,5,6\}$.  The ideal $I_{\overline{M}_{0,6}}$
has additional generators 
\begin{multline*}
q_{ij} = x_{ik} x_{jk} x_{ijk}^2 x_{lm}
x_{ln} x_{mn} - x_{il} x_{jl} x_{ijl}^2 x_{km} x_{kn} x_{mn}\\
 + x_{im}
x_{jm} x_{ijm}^2 x_{kl} x_{kn} x_{ln}
 - x_{in} x_{jn} x_{ijn}^2 x_{kl}
x_{km} x_{lm},
\end{multline*} 
for $1 \leq i < j \leq 6$, where again $\{i,j,k,l,m,n\}= \{1,\dots, 6\}$.
\end{enumerate}

\end{example}

\begin{remark}
When $n=4$ we can still follow the recipe of Section~\ref{s:Equations}
to obtain the equations for $\overline{M}_{0,4} \cong \mathbb P^1$
inside $X_{\Sigma^*} \cong \mathbb P^2$.  The Grassmannian $G(2,5)
\subseteq \mathbb P^5$ is the hypersurface
$Z(x_{12}x_{34}-x_{13}x_{24}+x_{14}x_{23})$.  In this case $M_{0,4} =
Z(x_{34}-z_{24}+1)$ in $T^2=\Spec(\K[z_{24}^{\pm 1},z_{34}^{\pm 1}])$.
Also \renewcommand{\arraystretch}{0.8}
\renewcommand{\arraycolsep}{2pt}
$$R=\left( \text{\footnotesize $\begin{array}{rrrrrr} 1 & 0 & -1 & -1 & 0 & 1 \\ 0 & 1 & -1
& -1 & 1 & 0 \\ \end{array}$} \right) \left( \text{\footnotesize $\begin{array}{rrr} 1 & 0 & 0 \\ 
0 & 1 & 0\\
0 & 0 & 1 \\
0 & 0 & 0 \\
0 & 0 & 0 \\
0 & 0 & 0 \\
0 & 0 & 0 \\
\end{array}$}
\right), \text{  so } 
\renewcommand{\arraystretch}{0.8}
\renewcommand{\arraycolsep}{2pt}
V=\left( \text{\footnotesize $\begin{array}{rrrrrr} 1 & 0 & 0 & 0 & 0 & 0 \\
0 & 1 & 0 & 0 & 0 & 0 \\
0 & 0 &1 & 0 & 0 & 0 \\
\end{array}$}
\right).
$$ Thus $\nu(x_{12}x_{34}-x_{13}x_{24}+x_{14}x_{23}) = y_1-y_2+y_3
\subseteq \K[y_1,y_2,y_3] = \Cox(\mathbb P^2)$.
\end{remark}

\section{VGIT and the effective cone of $\overline{M}_{0,n}$ }
\label{s:VGIT}

An important invariant of a projective variety $Y$ is its
pseudoeffective cone $\overline{\mathrm{Eff}}(Y)$, and one of the
primary goals of Mori theory is to understand the decomposition of
this cone into Mori chambers.  In this section we prove Theorem
\ref{t:GITM0n}, which is the second part of
Theorem~\ref{t:M0nmainthm}.  We identify a subcone $\mathcal G$ of the
effective cone of $\overline{M}_{0,n}$ for which one has that
$\overline{M}_{0,n}$ can be constructed as a GIT quotient of an affine
variety with linearization determined by a given $D \in \mathcal G$.
This prompts Question~\ref{q:MDregion}, which asks whether the cone
$E$ of divisors spanned by boundary classes is a Mori Dream region of
the effective cone.

Let $H=\Hom(\DivCl(X_{\Delta}), \K^{\times}) \cong
(\K^{\times})^{b+n}$.  The torus $H$ acts on $\mathbb A^{|\mathcal
I|}$ with weights given by the columns of the matrix $G$ of Equation~\ref{eqtn:G}.
Recall the cone $\mathcal G(X_{\Delta})= \bigcap_{\sigma \in \Delta}
\pos([D_{I}] : I \not \in \sigma)$ from Definition~\ref{d:Gcone}.  Let
$i \colon \MOn \rightarrow X_{\Delta}$ be the inclusion of
Theorem~\ref{t:definingThm}.  The pullback $i^*(\mathcal
G(X_{\Delta})) \subset N^1(\MOn)\otimes \mathbb R$ is a subcone of the
nef cone of $\MOn$.

\begin{theorem} \label{t:GITM0n}
\begin{enumerate}
\item   For rational $\alpha \in
\intt(\mathcal G(X_{\Delta}))$ we have the GIT construction of $\MOn$ as
$$\MOn = Z(I_{\MOn}) \git_{\alpha} H,$$ where $Z(I_{\MOn}) \subset
\mathbb A^{|\mathcal I|}$ is the affine subscheme defined by
$I_{\MOn}$.

\item Let $n \geq 5$.  Given $\beta \in \mathbb N A_n$ there is
$\alpha \in \mathbb NG$ for which
$$Z(I_{\MOn}) \git_{\alpha} H = G(2,n) \git_{\beta} T^{n-1},$$ so all
GIT quotients of $G(2,n)$ by $T^{n-1}$ can be obtained from $\MOn$ by
variation of GIT.
\end{enumerate}
\end{theorem}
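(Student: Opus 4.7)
The plan is to deduce Theorem~\ref{t:GITM0n} from the general machinery of Section~\ref{s:GIT} by verifying that all hypotheses are satisfied in the $\MOn$ setting established in Sections~\ref{s:MOn} and~\ref{s:MOnProof}. For part (1), the setup is ready-made for Proposition~\ref{p:GIT}: Theorem~\ref{t:definingThm} tells us that $X_\Delta$ is a sufficiently large toric subvariety of $X_{\Sigma^\star}$ and that $\MOn = G(2,n) \git^\star_n T^{n-1}$ (using that $\MOn$ is smooth and irreducible, so coincides with the pullback to the normalization), while Lemma~\ref{l:ClequalsPic} provides the torsion-free divisor class group required to form $H \cong (\K^\times)^{b+n}$. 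Applying Proposition~\ref{p:GIT} with $X = G(2,n)$ and $X_\Sigma = X_\Delta$ then delivers the claimed GIT description for $\alpha$ in the relative interior of $\mathcal{G}(X_\Delta)$.

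For part (2), the plan is to invoke Theorem~\ref{t:VGIT} and Corollary~\ref{c:VGIT}. This requires two preliminary checks, both at the level of the combinatorics of Definition~\ref{d:D}: first that $X_\Delta$ contains every ray of $X_{\Sigma^\star}$ corresponding to a column of the Gale dual matrix $D$, and second that $D$ has no repeated columns. The first is immediate from the factorization $R = D(I \mid C^T)$ of Equation~\ref{eqtn:RM0n}, which shows that the first ${n\choose 2}$ columns of $R$ (those indexed by $I \in \mathcal I$ with $|I| = 2$ or $|I| = n-2$, under the convention $x_I = x_{[n]\setminus I}$) are literally the columns of $D$. The second follows by direct inspection of the formula for $D_{ij,kl}$, which shows that different pairs $\{k,l\}$ produce different patterns of $\pm 1$ entries.

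Once the hypotheses are in place, it remains to exhibit, for each $\beta \in \mathbb N A_n$, an $\alpha \in \mathbb N G$ meeting the conditions of Theorem~\ref{t:VGIT}. The idea is to take the cheapest lift: write $\beta = A_n u$ for some $u \in \mathbb N^{n\choose 2}$ and set $\alpha = (\beta, 0) \in \mathbb Z^{n+b}$. Using the block form of $G$ in Equation~\ref{eqtn:G}, one checks directly that $(\beta, 0) = \sum_{ij} u_{ij}\, \mathbf g_{ij} + \sum_{I} (Cu)_I\, \mathbf g_I$, exhibiting $\alpha$ as an element of $\mathbb N G$ (this uses that the entries of $C$ are nonnegative, so $Cu \in \mathbb N^b$). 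The condition $\pi(\alpha) = \beta$ holds by construction, and the positivity requirement $\alpha_i \geq -\min\{(Cu')_i : A_n u' = \beta,\ u' \in \mathbb Q_{\geq 0}^{n\choose 2}\}$ is automatic: the right-hand side is nonpositive because $Cu' \geq 0$ whenever $u' \geq 0$, while $\alpha_i = 0$. Theorem~\ref{t:VGIT} then yields $Z(I_{\MOn}) \git_\alpha H = G(2,n) \git_\beta T^{n-1}$.

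I expect the main obstacle to be notational bookkeeping rather than mathematical content: one must carefully track the identifications of pair-indexed and $\mathcal I$-indexed objects and keep straight the various lattices and grading matrices in order to match the present setting with the hypotheses of Theorem~\ref{t:VGIT} cleanly. Assuming that is handled, both statements reduce mechanically to the general results of Sections~\ref{s:Equations} and~\ref{s:GIT}, with Theorem~\ref{t:definingThm} and Lemma~\ref{l:ClequalsPic} supplying the $\MOn$-specific input.
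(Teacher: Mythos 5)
Your proposal is correct and follows essentially the same route as the paper: part (1) is a direct application of Proposition~\ref{p:GIT} (with Theorem~\ref{t:definingThm} and Lemma~\ref{l:ClequalsPic} supplying the hypotheses), and part (2) follows from Theorem~\ref{t:VGIT} and Corollary~\ref{c:VGIT} after noting that $\Delta$ contains the rays given by the columns of $D$ and that $D$ has no repeated columns for $n \geq 5$. Your explicit choice $\alpha = (\beta,0)$, justified by the nonnegativity of $C$, is exactly the observation the paper records in the remark following the theorem.
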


\begin{proof}
The first part of the theorem is a direct application of
Proposition~\ref{p:GIT}.  For the second, note that for $n \geq 5$
there are no repeated columns in the matrix $D$, so the result follows
from Theorem~\ref{t:VGIT} and Corollary~\ref{c:VGIT}.
\end{proof}

\begin{remark}
\begin{enumerate}
\item The second part of the theorem is still true for $n=4$, as
$\overline{M}_{0,4} \cong \mathbb P^1$, which is also equal to one of
the GIT quotients of $G(2,4)$.
\item When $n=5$, $I_{\overline{M}_{0,5}} = I_{2,5}$, so we see that
$\overline{M}_{0,5}=G(2,5)\git_{\alpha} T^4$ for $\alpha \in
\intt(\mathcal G(X_{\Delta}))$.  In this case the second part of the
theorem is a tautology.
\item For larger $n$, since $A$ and $C$ are nonnegative matrices the
expression \linebreak $-\min \{(Cu)_i : Au=\beta, u \in \mathbb Q_{\geq 0}^{m+1}
\}$ of Theorem~\ref{t:VGIT} is a nonpositive integer, and thus we can choose $\alpha =
(\beta,0)$.
\end{enumerate}
\end{remark}

\begin{remark}
A natural problem is to give a combinatorial description of these equations for $\MOn$, similar to that given for the
GIT quotient $G(2,n)\git_{\beta} T^{n-1}$ in \cite{HMSV}.  Generators
for the corresponding ring can still be described by graphs on $n$
vertices, but an added complication is that their Kempe lemma is not
true; the generators corresponding to noncrossing graphs no longer
give a basis for the degree-one part of the ring.
\end{remark}

In \cite{HuKeel} Hu and Keel introduce Mori Dream spaces, which are
 varieties whose effective cones are polyhedral and for which the Mori
 chamber decomposition breaks this cone into a finite number of
 polyhedral pieces.
They prove that if a variety $Y$ is a Mori Dream Space, then there is
 an embedding of $Y$ into a projective toric variety $X_{\Sigma}$, and
 thus a GIT construction of $Y$, so that $Y$ and $X_{\Sigma}$ have
 isomorphic Picard groups and effective cones, and all small $\mathbb
 Q$-factorial modifications of $Y$ can be obtained by variation of GIT
 quotient from $Y$.  They also define the weaker notion of subcone $C
 \subseteq \overline{\mathrm{Eff}}(Y)$ being a Mori dream region.
 This holds if
 $$R=\bigoplus_{D \in C}H^0(V, D)$$ is finitely generated.  

 Hu and Keel raise the question of whether $\overline{M}_{0,n}$ is a
 Mori Dream space.  Theorem~\ref{t:definingThm} shows that $\MOn$
 embeds into $X_{\Delta}$ and Lemma~\ref{l:ClequalsPic} shows that
 $\Pic(\MOn) \cong \DivCl(X_{\Delta})$.  However, for $n \ge 6$, Keel
 and Vermeire showed that the the cone $E$ generated by the boundary
 divisors of $\MOn$ is a proper subcone of
 $\mathrm{Eff}(\overline{M}_{0,n})$ \cite{Vermeire}, so the effective
 cones of $\overline{M}_{0,n}$ and $X_{\Delta}$ differ.  On the other
 hand, Castravet~\cite{Castravet} showed $\overline{M}_{0,6}$ is
 indeed a Mori Dream space.  We believe that $E$ may be a Mori Dream
 region.  Indeed, the GIT chambers of $Z(I_{\MOn}) \git_{\alpha} H$
 divide $E$ into polyhedral chambers, each of which corresponds to a
 different compactification of $M_{0,n}$.  The chamber containing
 $\mathcal G(X_{\Delta})$ corresponds to the compactification $\MOn$.
 
\begin{question} \label{q:MDregion}
\begin{enumerate}
\item Let $\overline{M}_{0,6} \rightarrow X_{\Sigma}$ be the embedding
  of $\overline{M}_{0,6}$ into a toric variety of dimension $24$ with
  isomorphic Picard group and effective cone guaranteed by the Mori
  dream space construction.  Let $X_{\Sigma'}$ be the toric subvariety
  obtained from $X_{\Sigma}$ by removing $T^{24}$-orbits of $X_{\Sigma}$
  not intersecting $\overline{M}_{0,6}$.  Is $X_{\Sigma'}$ obtained
  from $X_{\Delta}$ in a natural way (such as by tropical
  modifications) that generalizes to $n>6$?

\item Is there a toric embedding $\MOn \rightarrow X_{\Sigma}$ with
$\MOn$ and $X_{\Sigma}$ having isomorphic Picard groups and effective cones
that can be obtained from $X_{\Delta}$ by tropical modifications?
This would support the conjecture that $\MOn$ is a Mori dream space.
\item 
Let $E$ be the closed subcone of $\mathrm{Eff}(\MOn)$ spanned by the
boundary divisors,
let $S$
be the Cox ring of $X_{\Delta}$ and $I_{\overline{M}_{0,n}}$
the ideal of $\overline{M}_{0,n}$ in $S$. Is 
 $$(S/{I}_{\overline{M}_{0,n}})_{D} \cong
 H^0(\overline{M}_{0,n}, D),$$ for all $D \in E$?  This would imply
 that $E$ is a Mori Dream region for $\MOn$.
\end{enumerate}
 \end{question}

\def\cprime{$'$} \def\cprime{$'$}

 
\end{document}